\documentclass[11pt]{amsart}

\title[Powell's conjecture on the Goeritz group]{A proof of Powell's conjecture on the Goeritz group of $S^3$}
\author{Daiki Iguchi}

\address{Higashihiroshima, Japan}
\email{diguchi00@gmail.com}

\keywords{$3$-sphere; Heegaard splitting; Goeritz group}
\subjclass{57K30, 57M60} 

\usepackage{amsfonts,amsmath,amsthm,amssymb,amscd,mathrsfs}
\usepackage{latexsym}
\usepackage{graphicx}
\usepackage{psfrag}
\usepackage{color}
\usepackage[abs]{overpic}
\usepackage{caption}
\usepackage[all]{xy}
\usepackage{keytheorems}
\usepackage{enumitem}
\usepackage[colorlinks=true, allcolors=blue]{hyperref}

\newkeytheorem{theorem}[parent=section]
\newkeytheorem{lemma}[sibling=theorem]
\newkeytheorem{corollary}[sibling=theorem]
\newkeytheorem{proposition}[sibling=theorem]
\newkeytheorem{definition}[sibling=theorem]
\newkeytheorem{claim*}[name=Claim,numbered=false]
\newkeytheorem{conjecture}[sibling=theorem]

\newcommand{\interior}{\operatorname{int}}

\newcommand{\bd}[1]{\partial#1}

\newcommand{\Diff}{\operatorname{Diff}}

\newcommand{\MCG}{\operatorname{MCG}}

\newcommand{\lft}[1]{\widetilde{#1}}

\newcommand{\id}{\mathrm{id}}

\newcommand{\sSph}{\mathcal{S}}
\newcommand{\sBall}{\mathcal{B}}
\newcommand{\sCyl}{\mathcal{C}}
\newcommand{\grph}{\mathscr{G}}
\newcommand{\redg}{\mathscr{G}_{\mathrm{red}}}

\begin{document}

\begin{abstract}
     For a genus $g$ Heegaard splitting of the $3$-sphere, the Goeritz group is defined to be the group of isotopy classes of diffeomorphisms of the $3$-sphere that preserve the splitting setwise. In this paper, we prove the following conjecture proposed by Powell: For every $g \ge 3$, the Goeritz group of a genus $g$ Heegaard splitting is generated by four specific elements. Our proof relies crucially on the fact that a Heegaard surface of the $3$-sphere is topologically minimal, that is, its disk complex has nontrivial homotopy group in some dimension. Along the way, we also give a new proof of the fact that a genus $g$ Heegaard surface of the $3$-sphere has topological index $2g-1$.  
\end{abstract}

\maketitle

\section{Introduction}\label{sec:Introduction}

Let $M$ be a closed orientable $3$-manifold. A \emph{genus $g$ Heegaard splitting} of $M$ is a decomposition of $M$ into two handlebodies $A$ and $B$ along a genus $g$ surface $T$. We denote by $\Diff(M,T)$ the group of those diffeomorphisms $\tau:M \to M$ such that $\tau(A)=A$. The \emph{mapping class group} $\MCG(M,T)$ of the Heegaard splitting $(M,T)$ is the group of path components of $\Diff(M,T)$, and the \emph{Goeritz group} $G(M,T)$ is the kernel of the map $\MCG(M,T) \to \MCG(M)$.  

Alternatively, $G(M,T)$ can be viewed as the quotient of the fundamental group of the space $\Diff(M)/\Diff(M,T)$ of Heegaard splittings \cite[Theorem~1]{Johnson-McCullough}: any element of $G(M,T)$ is represented by an isotopy $T_\theta$, $\theta \in [0,2\pi]$, that starts with $T$ and comes back to the initial position. For example, consider a genus $g$ Heegaard splitting $(S^3,T)$ of the $3$-sphere. For $g \ge 3$, Powell proposed five isotopies that represent elements of $G(S^3,T)$ \cite{Powell}. Four of the five isotopies are depicted in Figures~\ref{fig:Powell_moves} and \ref{fig:standard_twist}. (One of the isotopies is known to be redundant \cite{Scharlemann20}.) 

Although it was once believed to be proved that these isotopies represent generators of $G(S^3,T)$, Scharlemann \cite{Scharlemann04} pointed out a subtle gap in the proof in \cite{Powell}. So the problem remains open for $g >3$ until now, while the genus $3$ case has recently been resolved \cite{Freedman-Scharlemann,Cho-et-al}. The main purpose of the present paper is to give a new correct proof of Powell's conjecture: 

\begin{theorem}[The Powell conjecture]\label{thm:Powell}
    For $g \ge 3$, the Goeritz group $G(S^3,T)$ of genus $g$ Heegaard splitting of $S^3$ is generated by the four elements $D_\omega$, $D_\eta$, $D_{\eta_{12}}$ and $D_\theta$ shown in Figures~\ref{fig:Powell_moves} and \ref{fig:standard_twist}. 
\end{theorem}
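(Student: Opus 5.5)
We will prove the theorem by making $G(S^3,T)$ act on a simply connected complex and applying the standard presentation argument for such actions. If a group acts on a connected, simply connected complex, it is generated by the vertex stabilizers together with elements that carry a base vertex to its neighbors. We take the complex to be a \emph{reducing sphere complex} $\redg$. Its vertices are isotopy classes of reducing spheres $P$ for $(S^3,T)$, i.e. spheres meeting $T$ in a single essential circle. Two vertices are joined by an edge when the spheres can be isotoped to meet $T$ in a controlled way; for example, they are disjoint, or they meet in a single arc of intersection inside $A$ or inside $B$, as in Scharlemann's genus-$3$ approach and Zupan's complex. The group $G(S^3,T)$ acts on $\redg$. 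By Waldhausen's theorem any two reducing spheres are related by a diffeomorphism in $G(S^3,T)$, so the action is transitive on vertices.

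The steps are as follows.

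\begin{enumerate}
\item \textbf{Stabilizers.} Fix a standard reducing sphere $P_0$ splitting $(S^3,T)$ into a genus-$1$ and a genus-$(g-1)$ splitting. Its stabilizer is an extension built from three pieces: the Goeritz groups of the two summands, the rotation of the sphere (the element $D_\omega$ or $D_\theta$ type move), and the swap and slide moves. By induction on $g$, with the genus-$1$ and genus-$2$ cases known (Goeritz, Scharlemann, Akbas, Cho), this stabilizer is generated by the four Powell elements. For the induction to close, the genus-$2$ generators must be expressed through $D_\omega,D_\eta,D_{\eta_{12}},D_\theta$ and their conjugates. The redundancy of the fifth generator \cite{Scharlemann20} is used here.
\item \textbf{Edge elements.} For each edge type at $P_0$, we exhibit an explicit Powell element sending $P_0$ to the adjacent vertex, up to the stabilizer. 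This is a picture check.
\item \textbf{Connectivity and simple connectivity of $\redg$.} This is the heart of the proof.
\end{enumerate}

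Step 3 is the main obstacle; it is exactly where Powell's original argument had a gap. Following the abstract, we would derive it from topological minimality. Consider the disk complex $\mathcal{D}(T)$, whose vertices are essential disks in $A$ and in $B$. Reducing spheres correspond to \emph{reducing pairs} $\{D_A,D_B\}$ with $\partial D_A=\partial D_B$, or more robustly to primitive pairs meeting in one point.

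The plan is to construct a map relating loops in $\redg$ to spheres in the disk complex, proceeding in three moves.
\begin{enumerate}
\item Given a loop in $\redg$, build a family of sweepouts, or of isotopies of $T$, parametrized by a $2$-disk's boundary.
\item Use Rubinstein--Scharlemann graphic and Cerf-theory arguments to extend this family over the $2$-disk, generically with respect to $T$. This is possible because the space of Heegaard splittings of $S^3$ is controlled by Cerf's theorem and the Hatcher (Smale conjecture) contractibility of $\Diff(S^3)$ modulo $O(4)$.
\item Read off a null-homotopy from the labels of regions in the graphic. Regions where $T$ is compressible on both sides give reducing spheres via Waldhausen's argument. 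The bad configurations are those where a region forces a disk family realizing a nontrivial homotopy class of $\mathcal{D}(T)$.
\end{enumerate}

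To rule out or control these bad configurations, we would prove directly that $\mathcal{D}(T)$ is homotopy equivalent to $S^{2g-2}$, which recovers index $2g-1$. Then low-dimensional parameter families, of dimension $2 < 2g-2$ when $g\ge 3$, cannot detect the nontrivial class, so the labeling can be deformed to pass only through reducible configurations. This yields $\pi_1(\redg)=1$, and the theorem follows from the presentation argument.

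The dimension count $2<2g-2$ also explains the hypothesis $g\ge 3$. I expect the delicate points to be two. The first is making the graphic labeling argument genuinely two-parametric without Powell's gap. The second is proving the homotopy type of $\mathcal{D}(T)$, which I would do by induction on $g$, using a contractible subcomplex of disks disjoint from a fixed primitive disk and a suspension-type decomposition.
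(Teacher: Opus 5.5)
The gap is in Step~3, and Step~3 is the entire content of the theorem. Steps~1--2 are a standard reduction. Note that for \emph{generation} you only need the reducing sphere complex to be connected; simple connectivity matters only for a presentation. Also, the action is vertex-transitive only once you fix the splitting type of the spheres, e.g.\ genus $1$ versus genus $g-1$. Connectivity of such a complex is essentially a known reformulation of the Powell conjecture (Zupan), so nothing is gained until Step~3 is proved, and your sketch of it does not work.

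Extending a loop of isotopies of $T$ over a $2$-disk is circular. Such a loop bounds a disk of Heegaard surfaces only if it is trivial in $\pi_1(\Diff(S^3)/\Diff(S^3,T))$, which surjects onto $G(S^3,T)$. Hatcher's theorem controls $\Diff(S^3)$, not this quotient. If instead you fix $T$ and fill a loop of spheres, a Rubinstein--Scharlemann graphic only produces level spheres that cut off compressing disks for $T$ on both sides, i.e.\ weak reductions. Waldhausen's theorem gives a reducing sphere for a single surface, not one chosen coherently across regions of the graphic. That family-wise upgrade from weak reduction to reduction is exactly the kind of step Powell's argument could not justify. You name the difficulty but supply no new idea for it.

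Your intended use of topological minimality also points the wrong way. There is no map from $\Gamma(T)$, or from null-homotopies in it, to the reducing sphere complex. So $(2g-3)$-connectivity of the disk complex yields nothing about reducing spheres, and ``low-dimensional families cannot detect the class, so the labeling can be deformed through reducible configurations'' has no mechanism behind it. You also do not need $\Gamma(T)\simeq S^{2g-2}$, nor even the lower bound on the index.

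The paper uses the \emph{nontrivial} class, and only the upper bound $d\le 2g-1$:
\begin{enumerate}
\item A homotopically nontrivial $\psi_0:S^{d-1}\to\Gamma(T)$ is realized as a $d$-parameter sweepout $\{T_t\}$.
\item For the given loop $\tau_\theta$, one compares $\tau_\theta(T_t)$ with level spheres at a balancing height. There the subcomplex $\Sigma_{t\theta}$ of disks admitting shadows is either empty or contractible.
\item Contractibility lets any hypersurface avoiding $L=\{\Sigma_{t\theta}=\emptyset\}$ lift to $\Gamma(T)$. Hence nontriviality of $\psi_0$ forces $L$ to contain an arc from $\theta=0$ to $\theta=2\pi$.
\item Along that arc $T$ is, after resolving saddles, nearly parallel to a level sphere. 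So the isotopy can be replaced by one supported by spines in a level sphere that change by $S$-slides.
\item This yields a chain of primitive disks, to which the Freedman--Scharlemann induction from $g=3$ applies.
\end{enumerate}
What your plan lacks is precisely such a forcing argument. It converts the nontrivial class into the existence of a good position along the whole loop, rather than treating it as an obstruction to be avoided.
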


A remarkable feature of a Heegaard surface of $S^3$ is that it is a topologically minimal surface. The notion of a topologically minimal surface was introduced by Bachman \cite{Bachman}. For example, if a surface $F \subset M$ is incompressible or strongly irreducible, then $F$ is a topologically minimal surface of index $0$ or $1$, respectively. By \cite{Appel,Campisi-Torres}, it is known that a genus $g$ Heegaard surface of $S^3$ has index $2g-1$. We also give an alternative proof of this fact in Section~\ref{sec:bound4index}, as a byproduct of the proof of Theorem~\ref{thm:Powell}. 

It is always natural to ask whether a result on incompressible or strongly irreducible surfaces can be generalized to surfaces of higher index. See \cite{Bachman} for examples. In relation to the Goeritz groups, the author \cite{Iguchi} proved that any strongly irreducible Heegaard splitting of an irreducible atoroidal $3$-manifold has the finitely generated Goeritz group, based on ideas by \cite{Colding-et-al18}. The same also holds for Heegaard splittings of higher index under some additional assumption. These facts suggest that a correct approach to Theorem~\ref{thm:Powell} would rely on topological minimality of $T$, and this is a motivation of the present paper.  

This paper is organized as follows. Section~\ref{sec:outline} is an outline of the proof of Theorem~\ref{thm:Powell}. In Section~\ref{sec:diskcomplex}, we recall the definitions of the disk complex and its variant, and discuss their basic properties. After reviewing sweepout theory in Section~\ref{sec:sweepout}, we will see that the argument in \cite{Bachman} can be adapted to Heegaard splittings of $S^3$ in Section~\ref{sec:balanced_sphere}. At this point, we can prove some interesting results. For example, we will prove an analogue of Theorem~3.2 in \cite{Bachman}. As a corollary of this result, we also give a lower bound on the index of a Heegaard splitting of $S^3$. These results are proved in Section~\ref{sec:bound4index}. Key ideas of the proof of Theorem~\ref{thm:Powell} are presented in Section~\ref{sec:parallelization}, after establishing several technical results in Sections~\ref{sec:sliding}. Finally, the proof of Theorem~\ref{thm:Powell} is completed in Section~\ref{sec:finalstep}.  

I would like to thank Martin Scharlemann for his valuable comments. 

\section{Outline of the proof}\label{sec:outline}

We start with some setups. Let $S$ be a $2$-sphere in $S^3$. Fix a finite graph $K \subset S$ that is a bouquet of circles $e^1,e^2,\ldots,e^g$ that bound disks $\Delta^1,\Delta^2,\ldots,\Delta^g \subset S$, respectively, such that $\interior \Delta^i \cap \interior \Delta^j=\emptyset$ for $i \neq j$. We view the Heegaard surface $T$ as a boundary of a neighborhood of $K$, as shown in Figure~\ref{fig:standard_spine}. We denote by $A$ the handlebody with $\bd{A}=T$ that contains $K$, and by $B$ the other handlebody. Note that by definition $K$ is a \emph{spine} of $A$, that is, $A \setminus K$ is homeomorphic to $T \times (0,1]$. 

\begin{figure}
    \begin{overpic}[scale=.5]{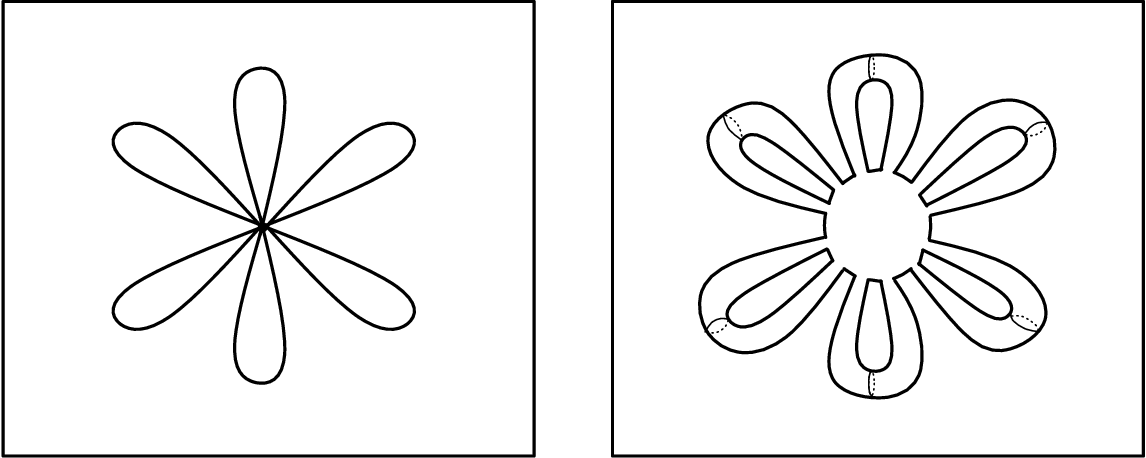}
        \put(5,5){$S$}
        \put(100,70){$e^1$}
        \put(70,90){$e^2$}
        \put(35,85){$e^3$}
        \put(30,45){$e^4$}
        \put(45,20){$e^5$}
        \put(95,45){$e^6$}
        \put(260,70){$T$}
    \end{overpic}    
    \caption{Left: a standard spine $K$. Right: the Heegaard surface $T$ can be viewed as the boundary of a regular neighborhood of $K$.}\label{fig:standard_spine}
\end{figure}

Let $T_\theta$, $\theta \in [0,2\pi]$, be an isotopy of a Heegaard surface with $T_0=T_{2\pi}=T$. We say $T_\theta$ is a \emph{Powell move} if $T_\theta$ is equivalent to a composition of moves shown in Figures~\ref{fig:Powell_moves} and \ref{fig:standard_twist}. 

\begin{figure}
    \begin{overpic}[scale=.6]{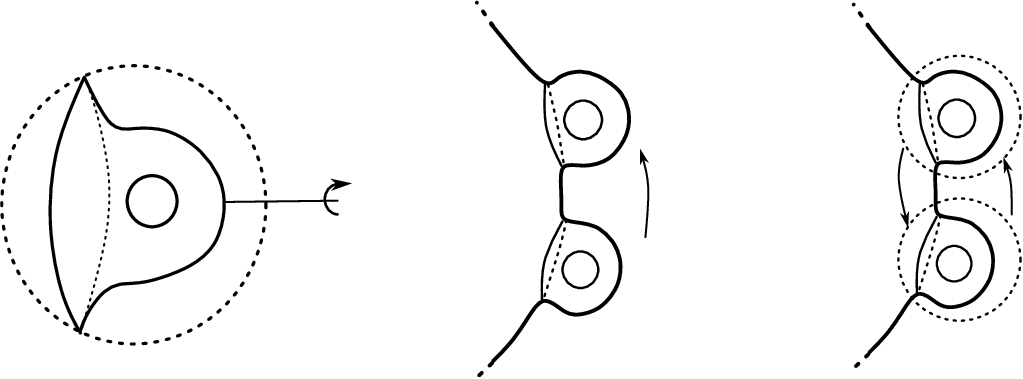}
        \put(103,47){$\pi$}
        \put(200,47){$2\pi/g$}
    \end{overpic}    
    \caption{$D_\omega$ flips the first $1$-handle (left), $D_\eta$ rotates $T$ along the $z$-axis (middle), and $D_{\eta_{12}}$ exchanges the first and the second $1$-handles (right).}\label{fig:Powell_moves}
\end{figure}

\begin{figure}
    \begin{overpic}[scale=.5]{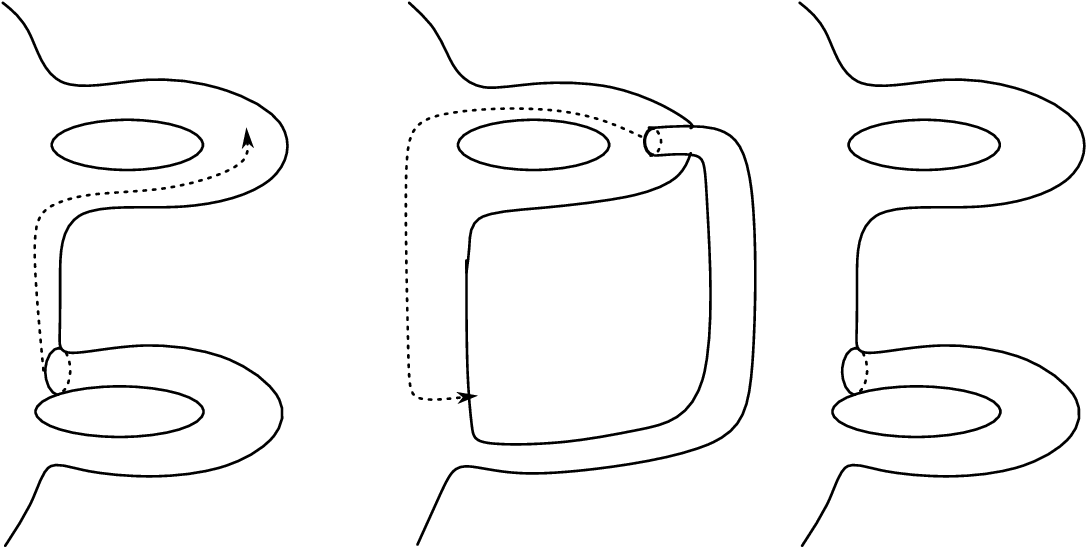}
    \end{overpic}    
    \caption{$D_\theta$ slides the first $1$-handle over the second.}\label{fig:standard_twist}
\end{figure}

It suffices to prove Theorem~\ref{thm:Powell} that any isotopy $T_\theta$ is a Powell move. To state a key step of the proof, we need a few definitions. Let $A_\theta$ be the isotopy of $A$ corresponding to $T_\theta$. 

\begin{definition}
    We will say $T_\theta$ is \emph{supposed by a family of spines} $K_\theta$ if for every $\theta$, $K_\theta$ is a spine of $A_\theta$.
\end{definition}

\begin{figure}
    \begin{overpic}[scale=.5]{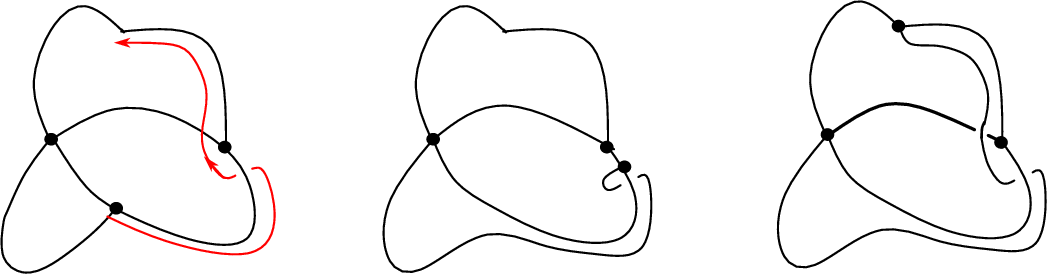}
    \end{overpic}    
    \caption{Sliding an edge along an arc.}
    \label{fig:edgeslide}
\end{figure}

An \emph{edge slide} is an operation on a spine $K'$ that sidles one of the two ends of an edge of $K'$ along an arc in a regular neighborhood of $K'$ (Figure~\ref{fig:edgeslide}). See, e.g., \cite{Scharlemann-et-al} for the precise definition. 

\begin{definition}
    An edge slide of a spine will be called an \emph{$S$-slide} if each intermediate spine lies in $S$.
\end{definition}

A key feature of $S$-slides is that the meridian disk of the edge that is slid is necessarily primitive, see Section~\ref{sec:finalstep}.  

As a key step, we will prove the following theorem: 

\getkeytheorem{planar}

To deduce Theorem~\ref{thm:Powell} from Theorem~\ref{thm:planargraph}, we will follow a strategy proposed by Freedman and Scharlemann in \cite{Freedman-Scharlemann}. Let $a^1$ be a cocore of the first $1$-handle of $A$. By virtue of Theorem~\ref{thm:planargraph}, there are finitely many points $0=\theta_0<\theta_1<\cdots<\theta_{n-1}<\theta_n=2\pi$ and for each $\theta_i$ we can choose a primitive disk $a_{\theta_i} \subset A_{\theta_i}$ that is a cocore of a $1$-handle corresponding to an edge of $K_{\theta_i} \subset S$. To each $\theta_i$ associate a map 
\begin{eqnarray*}
    h_{\theta_i}:(S^3,T_{\theta_i},a_{\theta_i}) \to (S^3,T,a^1).
\end{eqnarray*}
By an inductive argument, we can show that such $h_{\theta_i}$ is unique up to Powell move. Moreover, $h_{\theta_i}\tau_{\theta_i}$ is identical to $h_{\theta_{i+1}}\tau_{\theta_{i+1}}$ up to Powell move. This implies that $\tau_{2\pi}$ is identical to $\id_{S^3}$ up to Powell move, as desired. We will explain more details in Section~\ref{sec:finalstep}. 

Now Theorem~\ref{thm:Powell} is reduced to Theorem~\ref{thm:planargraph}. The proof of Theorem~\ref{thm:planargraph} goes as follows. 

We first construct a sweepout $\{T_t\}$ parametrized by points in the $d$-ball $B^d$, based on ideas in \cite{Bachman}. As usual, we would like to take $\{T_t\}$ to be ``nontrivial'' so that useful information can be extracted from it. The existence of such a sweepout is guaranteed by the following fact. 

\getkeytheorem{index}

This implies that there is a homotopically nontrivial map $\psi_0:S^{d-1} \to \Gamma(T)$ for some $d \le 2g-1$. Roughly speaking, $\{T_t\}$ is constructed by mimicking the map $\psi_0$. 

Fix a height function $f:S^3 \to [-1,1]$ with $f^{-1}(0)=S$, and consider the second sweepout $\{S_s\}$ given by $S_s:=f^{-1}(s)$. Then, we will analyze the intersection pattern between $T_t$ and $S_s$. Using a similar argument to the proof of Theorem~3.2 of \cite{Bachman}, we can prove that $T$ can be isotoped into a kind of normal form:   

\getkeytheorem{normal_position}

It also follows from Theorem~\ref{thm:normal_position} and Lemma~\ref{lem:well-defined_index} that $T$ has topological index $d=2g-1$. These results will be proved in Section~\ref{sec:bound4index}. 

Next, we generalize Theorem~\ref{thm:normal_position} by introducing a new parameter $\theta$. Let $T_\theta$ be an isotopy that represents an element of $G(S^3,T)$. Consider a sweepout given by 
\begin{eqnarray*}
    T_{t\theta}:=\tau_\theta(T_t),
\end{eqnarray*}
where $\tau_\theta:S^3 \to S^3$ is an ambient isotopy that covers $T_\theta$. 

Roughly speaking, the point here is that $T_\theta$ can be isotoped simultaneously for $\theta$ to a position described in Theorem~\ref{thm:normal_position}. This implies that $T_\theta$ can be made ``nearly parallel'' to $S_s$. See Section~\ref{sec:sliding} for the definition. These ideas can be summarized as follows:  

\getkeytheorem{parallelism}

The proof of this theorem is inspired by \cite{Bachman} and \cite{Iguchi}, where the topological minimality of $T$ (Lemma~\ref{lem:well-defined_index}) is essentially used. 

By virtue of Theorem~\ref{thm:parallelization}, we can find a family of spines $K_\theta \subset S$ for $\theta \in [2\pi/3,4\pi/3]$. On the other hand, the isotopy that corresponds to a radial segment in $B^d$ is relatively easy to understand. We study such an isotopy in Section~\ref{sec:sliding} and show that it is supported by a family of spines arising from a sequence of $S$-slides and isotopies. Theorem~\ref{thm:planargraph} now follows from these things, which completes the outline of the proof of Theorem~\ref{thm:Powell}. 

\section{The homotopy index of the disk complex}\label{sec:diskcomplex}

We first recall several definitions from \cite{Bachman}. Let $T$ be a closed separating surface in an irreducible $3$-manifold $M$. The \emph{disk complex} $\Gamma(T)$ is the simplicial complex whose vertices are isotopy classes of compressing disks for $T$, and whose $k$-simplexes are $(k+1)$-tupples of vertices that admit pairwise disjoint representatives. If $\Gamma(T) \neq \emptyset$, the \emph{homotopy index} of $\Gamma(T)$ is defined to be the minimum number $d$ such that $\pi_{d-1}(\Gamma(T)) \neq 1$. If $\Gamma(T)=\emptyset$, define the homotopy index to be $0$. The surface $T$ is said to be \emph{topologically minimal} if $\Gamma(T)$ has well-defined index, and the \emph{topological index} of $T$ is defined to be the homotopy index of $\Gamma(T)$. 

Let $S \subset M$ be another surface. (We do not necessarily assume that $T$ intersects $S$ transversely.) Suppose that $D$ is a compressing disk for $T$. A disk $D'$ is called a \emph{shadow} of $D$ if $\bd{D'}=\bd{D}$, $D' \cap S=\emptyset$ and $\interior D'$ intersects $T$ in loops that are inessential in $T$. Define $\Gamma_S(T)$ to be the subcomplex of $\Gamma(T)$ spanned by those vertices that admit shadows.  

Next, suppose that $T$ is a genus $g$ Heegaard surface of $S^3$ and $S \subset S^3$ is a $2$-sphere. We study what information about $T$ can be obtained by adapting the notion introduced above. To this end, we fix a height function $f:S^3 \to [-1,1]$ with $S=f^{-1}(0)$. Write $S_s:=f^{-1}(s)$, $X^-_s:=f^{-1}([0,s])$ and $X^+_s:=f^{-1}([s,1])$. Then, $\{S_s\}$ is a sweepout of $S^3$ by $2$-spheres. Following \cite{Johnson10}, we say $T$ is \emph{mostly above} (resp. \emph{mostly below}) $S=S_s$ if $T \cap X^-_s$ (resp. $T \cap X^+_s$) is contained in a disk in $T$. 

\begin{definition}
    We will say $T$ is in a \emph{balanced position} with respect to $S$ if $T$ is neither mostly above nor mostly below $S$. A $2$-sphere $S$ is called a \emph{balancing sphere} for $T$. 
\end{definition}

The following proposition says that $T$ can intersect $S$ in some restrictive ways in terms of $\Gamma_S(T)$: 

\begin{proposition}\label{prop:empty-contractible}
    Suppose that $T$ is in a balanced position with respect to $S$. Then, one of the following two holds. 
    \begin{enumerate}
        \item $\Gamma_S(T)=\emptyset$, and hence $\Gamma_S(T)$ has homotopy index $0$. 
        \item $\Gamma_S(T)$ is contractible. 
    \end{enumerate}
\end{proposition}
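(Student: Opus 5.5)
The plan is to show that whenever $\Gamma_S(T)\neq\emptyset$, the complex $\Gamma_S(T)$ is a cone-like object that can be contracted by disk surgery. This follows the Hatcher/McCullough surgery argument, adapted to the "shadow" condition as in \cite{Bachman}. If $\Gamma_S(T)=\emptyset$ we are in case (1) by definition, so assume there is a vertex.

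\textbf{Step 1 (one side of $T$, one side of $S$).} Since a shadow $D'$ is disjoint from $S$, it lies in $f^{-1}([-1,0))$ or in $f^{-1}((0,1])$. Its interior meets $T$ only in loops inessential in $T$. So an innermost-disk argument replaces $D'$ by a compressing disk for $T$ with the same boundary, lying entirely in $A$ or in $B$ and still on the same side of $S$.

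I would then prove that all vertices of $\Gamma_S(T)$ come from disks on the same side of $T$ (say $A$) whose shadows lie on the same side of $S$ (say above). Suppose instead there are vertices $D\subset A$ and $E\subset B$, or vertices with shadows on opposite sides of $S$. The plan is to compress $T$ along them and use that $S^3$ is irreducible and that a Heegaard surface of $S^3$ is not strongly irreducible for $g\ge1$ only in the trivial sense. From this I would derive that $T\cap X^-_0$ or $T\cap X^+_0$ lies in a disk of $T$, contradicting balancedness. Concretely, shadows on both sides of $S$ should force all essential curves of $T\cap S$-components to bound disks in $T$ on one side.

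\textbf{Step 2 (contractibility by surgery).} Fix a vertex $D_0$, realized by a disk in $A$ whose shadow lies above $S$. Since $\Gamma_S(T)$ is a simplicial complex, it suffices to contract any finite subcomplex $L$ into the star of $D_0$. For a simplex $\sigma=\{D_1,\dots,D_k\}$ of $L$, represent its disks disjointly, with shadows above $S$, and in minimal position with $D_0$. Surger the $D_i$ along an outermost arc of $D_0\cap\bigcup D_i$ in $D_0$. The resulting disks are again essential compressing disks in $A$ disjoint from the previous ones. Their shadows are obtained by the same cut-and-paste of the old shadows with a subdisk of the shadow of $D_0$, so they still avoid $S$ and still meet $T$ only in inessential loops. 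Hence the new disks are vertices of $\Gamma_S(T)$. Iterating this reduces $|D_0\cap\bigcup D_i|$, and as in the standard proof of contractibility of the disk complex of a handlebody it yields a deformation of $L$ into the star of $D_0$.

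\textbf{Main obstacle.} I expect the main obstacle to be Step 1, and, tied to it, checking in Step 2 that surgery preserves the shadow property. The shadows of $D_i$ and $D_0$ need not be disjoint, and they need not coincide with the disks themselves, so the cut-and-paste of shadows has to be done carefully so that interior intersections with $T$ remain inessential. I would first try to normalize all shadows to be the disks themselves, pushed off $S$, using Step 1. The balanced-position hypothesis should be used exactly there, to rule out disks on the "wrong" side that would make surgery leave $\Gamma_S(T)$.
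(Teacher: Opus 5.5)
Your Step~1 sets out to prove a false statement, and Step~2 depends on it. Balance does not confine the vertices of $\Gamma_S(T)$ to one side of $T$, nor their shadows to one side of $S$.

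Here is a counterexample for $g\ge 2$. Put the wedge point of the bouquet on $S$. Let the petal $e^1$ hang below $S$ and let $e^2,\dots,e^g$ lie above it. Choose them to bound disks $\Delta^j$ with disjoint interiors that meet $S$ only at the wedge point. Then $T=\partial N(K)$ meets $S$ in one circle, which cuts off a once-punctured torus below and a once-punctured surface of genus $g-1$ above, so $T$ is balanced. Yet the meridian disks of $e^1$ and $e^2$ are vertices in $A$ whose shadows necessarily lie on opposite sides of $S$, and the disks $\Delta^j\setminus\interior N(K)$ are vertices in $B$.

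The first claim of Step~1 also fails in general. Removing the inessential loops of $\interior D'\cap T$ uses subdisks of $T$, and these may cross $S$. So a shadow cannot in general be traded for a genuine compressing disk off $S$; this is exactly why shadows enter the definition. Consequently the normalization you want in Step~2 is unavailable. The point you yourself flag, that outermost-arc surgery keeps you inside $\Gamma_S(T)$, remains unproved, so the surgery argument as sketched is not a proof.

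The missing idea is that no surgery is needed: $\Gamma_S(T)$ is a cone on an explicit vertex.

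\begin{itemize}
\item Take a shadow $D_1$, say below $S$. Choose $\epsilon$ so small that $D_1$ lies below $S\times\{-\epsilon\}$ and $f|_T$ has no critical points in $N_\epsilon\setminus S$.
\item Some loop of $T\cap(S\times\{-\epsilon\})$ is essential in $T$. Otherwise the genus of $T$ would sit on one side of this level. Then either $T$ would be mostly below $S\times\{-\epsilon\}$, hence mostly below $S$, contradicting balance, or the essential curve $\partial D_1$ would lie in a disk of $T$.
\item An innermost such loop bounds a disk $D_0\subset S\times\{-\epsilon\}$ which is itself a shadow, so it defines a vertex $v_0$.
\item For any vertex with shadow $D$, the curve $\partial D\subset T\setminus S$ can be pushed along $T$ out of $N_\epsilon$, hence off $\partial D_0$. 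So that vertex equals $v_0$ or is adjacent to it.
\item Since $\Gamma_S(T)$ is a full subcomplex of the flag complex $\Gamma(T)$, it is the cone on $v_0$, hence contractible.
\end{itemize}

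Balance is used only to produce this essential level loop, not to control which side anything lies on.
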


\begin{proof}
    Suppose that $\Gamma_S(T)$ is not empty. It suffices to show that $\Gamma_S(T)$ is contractible. Let $N_\epsilon \cong S \times [-\epsilon,\epsilon]$ be a small product neighborhood of $S$. We may assume $\bd{N_{\epsilon}}$ intersects $T$ transversely. 
    
    By assumption, there exists a disk $D_1$ with $\bd{D_1} \subset T$ that is a shadow of some compressing disk for $T$. Without loss of generality, we assume that $D_1$ lies in a $3$-ball below $S \times \{-\epsilon\}$. 

    Note that $T$ is not mostly below $S \times \{-\epsilon\}$. Indeed, if this was not the case, $T$ would be mostly below $S$ as well, which is impossible by assumption. This, together with the fact that $D_1$ is entirely below $S \times \{-\epsilon\}$, implies that there exists a loop in $S \times \{-\epsilon\} \cap T$ that is essential in $T$. Let $c$ be an innermost one among all such loops. Then, $c$ bounds a disk $D_0$ in $S \times \{-\epsilon\}$ that is a shadow of some compressing disk for $T$. 

    Let $D$ be a disk that is a shadow of some compressing disk for $T$. Then $\bd{D}$ can be pushed off $N_\epsilon \cap T$ by an isotopy. In particular, $D$ is disjoint from $D_0$ after this isotopy. So, $D_0$ is connected to every vertex of $\Gamma_S(T)$ by an edge. Since $\Gamma(T)$ and hence $\Gamma_S(T)$ are flag by definition, this implies that $\Gamma_S(T)$ is contractible. This proves the proposition. 
\end{proof}

In the remainder of this section, we study a consequence of the first possibility (1) in Proposition~\ref{prop:empty-contractible}. (A consequence of the second possibility will be discussed in Section~\ref{sec:balanced_sphere}.) We first observe 

\begin{lemma}\label{lem:empty}
    Let $\epsilon > 0$ and let $N_\epsilon \cong S \times [-\epsilon,\epsilon]$ be a small product neighborhood of $S$. If $\Gamma_S(T)=\emptyset$, then every loop in $T \cap \bd{N_\epsilon}$ is inessential in $T$. 
\end{lemma}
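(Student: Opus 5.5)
We argue by contraposition: assuming some loop of $T \cap \bd{N_\epsilon}$ is essential in $T$, we produce a vertex of $\Gamma_S(T)$, contradicting $\Gamma_S(T)=\emptyset$. This is the same innermost-loop argument used in the proof of Proposition~\ref{prop:empty-contractible}, run on one side of $S$. By symmetry, assume the essential loop lies in $S \times \{-\epsilon\}$, which we may take transverse to $T$. Note that $S\times\{-\epsilon\}$ is a $2$-sphere disjoint from $S$. It bounds a $3$-ball $E$ below it whose interior misses $S$; since $N_\epsilon$ is a product neighborhood, $E$ is disjoint from $S$.

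Among all loops of $T \cap (S\times\{-\epsilon\})$ that are essential in $T$, choose an innermost one $c$. Here innermost means that $c$ bounds a disk $D_0 \subset S\times\{-\epsilon\}$ whose interior meets $T$ only in loops inessential in $T$. Such a loop exists because the intersection consists of finitely many loops on a sphere: take a loop essential in $T$ bounding a disk on the sphere that contains no other loops essential in $T$. By construction, $D_0$ satisfies $\bd{D_0}=c \subset T$ and $D_0 \cap S = \emptyset$, and $\interior D_0$ meets $T$ only in loops inessential in $T$. Thus $D_0$ has exactly the form of a shadow, provided $c$ bounds a genuine compressing disk for $T$.

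The main step is to produce a compressing disk $D$ with $\bd{D}=c$. Each inessential loop of $\interior D_0 \cap T$ bounds a disk in $T$. Working from innermost loops of $D_0 \cap T$ on $D_0$ outward, we do disk swaps: replace a subdisk of $D_0$ by a parallel copy of the corresponding disk in $T$, then push it slightly off $T$. Irreducibility of $S^3$ ensures the isotopy class is unchanged where needed. This yields a disk $D$ with $\bd{D}=c$ and $\interior D \cap T=\emptyset$. Hence $D$ lies in $A$ or in $B$, and since $c$ is essential in $T$, $D$ is a compressing disk for $T$.

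Then $D_0$ is a shadow of $D$, so $[D]$ is a vertex of $\Gamma_S(T)$. This contradicts $\Gamma_S(T)=\emptyset$, and the lemma follows.

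The only delicate point is the swap step. The swapped disks in $T$ may pass through $S$, but that is harmless, because the shadow is $D_0$ itself, which is disjoint from $S$. The compressing disk $D$ is not required to avoid $S$, so no control on where the swaps occur is needed.
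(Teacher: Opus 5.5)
Your proof is correct and follows the paper's argument: an essential loop $c$ of $T \cap \bd{N_\epsilon}$ that is innermost on $\bd{N_\epsilon}$ bounds a disk $D_0 \subset \bd{N_\epsilon}$ that is a shadow. You also spell out, via disk swaps, why $c$ bounds a genuine compressing disk, which the paper leaves implicit. One small fix is needed in that step: each swap should use a loop of $\interior D_0 \cap T$ that is innermost in $T$, so that the disk it bounds in $T$ meets $D_0$ only in its boundary, not a loop innermost on $D_0$. Otherwise the pushed-off copy of that disk can cross other parts of $D_0$, and the result need not be embedded. Also, irreducibility of $S^3$ plays no role in this step.
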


\begin{proof}
    We argue by contradiction. Suppose that $T \cap S \times \{-\epsilon,\epsilon\}$ contains a loop that is essential in $T$. Let $c$ be an innermost one among all such loops. Then, $c$ bounds a disk in $\bd{N_\epsilon}$ that is a shadow of some compressing disk for $T$. Thus, $\Gamma_S(T)$ is not empty.  
\end{proof}

We say that a subset $c$ of $T$ is \emph{negligible} if $c$ is contained in a disk in $T$. The item (1) in Proposition~\ref{prop:empty-contractible} implies that the cell structure of $T$ induced by $f$ is relatively simple, after removing negligible intersections by an isotopy:  

\begin{lemma}\label{lem:empty->cells}
    If $\Gamma_S(T)=\emptyset$, then there is an isotopy of $T$ that removes all the negligible intersections with $S$ while the other intersections are left invariant. After this isotopy, $T \setminus S$ is a collection of open $0$- and $2$-cells. 
\end{lemma}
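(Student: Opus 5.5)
The plan is to work in the thin product neighborhood $N_\epsilon \cong S \times [-\epsilon,\epsilon]$ and use Lemma~\ref{lem:empty} to control the two boundary spheres. We take $\epsilon$ small enough that $T \cap N_\epsilon$ is, away from finitely many tangency points, a product neighborhood of $T \cap S$. By Lemma~\ref{lem:empty}, every loop of $T \cap \bd{N_\epsilon}$ is inessential in $T$. By continuity the same conclusion holds for every level sphere $S_s$ with $0<|s|\le\epsilon$ that meets $T$ transversely. Our first aim is to show that the essential loops of $T \cap S$ persist in a controlled way, while everything negligible can be pushed off.

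The first main step is to remove the negligible curves. Consider the components of $T \cap S$ (loops, or more degenerate pieces if $T$ is not transverse to $S$). Any such component $c$ that lies in a disk of $T$ bounds an innermost disk $E \subset T$ whose interior may meet $S$ only in further negligible curves. We argue by induction on an innermost disk. Since $S^3$ is irreducible and $S$ is a sphere, $E$ together with a disk in $S$ bounds a ball. Pushing $E$ across this ball, or simply pushing $E$ off $S$ into $S^3 \setminus S$ near $S$, removes $c$ and everything inside $E$. The hard part of this step is arranging the push so that it does not create new intersections and does not disturb non-negligible components. For this we use a nested family of disks: the union of all maximal negligible disks in $T$ is a disjoint union of disks whose boundaries avoid the non-negligible part of $T \cap S$. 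We push each of these disks slightly off $S$, keeping it within $N_\epsilon$ and fixing a collar of its boundary. We perform these pushes from innermost outward. The result is that the non-negligible components are left invariant, as the statement requires.

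The second main step is to determine the cell structure. Every remaining component of $T \cap S$ is essential in $T$. Let $R$ be a component of $T \setminus S$. Its boundary curves then lie in $S$ and sit next to curves of $T \cap S_{\pm\epsilon}$, which are isotopic to them in $T$. By Lemma~\ref{lem:empty}, each curve of $T \cap \bd{N_\epsilon}$ is inessential, hence bounds a disk in $T$. We combine this with the fact that each curve of $T\cap S$ is either essential or removed. If $T \cap S$ is a union of transverse loops, a pushed-off copy of a loop is simultaneously essential and inessential, which is impossible. So the surviving intersection must be non-transverse: tangencies at isolated points, or degenerate graphs in which $T$ touches $S$ from one side.

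We therefore expect $T \cap S$ after the isotopy to be a finite graph, possibly with isolated vertices, and each complementary region must be an open disk. The argument is as follows. A component $R$ of $T \setminus S$ lies in $S^3 \setminus S$, and any essential loop of $R$ could be isotoped into the level $S_{\pm\epsilon}$ near the frontier of $R$. This would produce an essential loop of $T\cap \bd{N_\epsilon}$, contradicting Lemma~\ref{lem:empty}. Hence every region $R$ is planar with inessential boundary, so it is an open $2$-cell. The points of $T\cap S$ not on edges are isolated tangencies, which are the $0$-cells. The main obstacle is precisely this last claim: we must show that no $1$-dimensional essential pieces survive. The first tool we would try is to perturb $T$ near any arc of tangency. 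We would then reapply Lemma~\ref{lem:empty} to the resulting level loops, showing that every arc of tangency is negligible and so was already removed in the first step.
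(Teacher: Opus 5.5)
There is a genuine gap in your first step. Removing a negligible component $c$ by pushing the disk $E\subset T$ it bounds across the ball cut off by $E$ and a disk $D\subset S$ is not, in general, an isotopy of $T$. The disk $D$ may meet $T$ in other curves, so that ball may contain other parts of $T$, possibly non-negligible ones. Ordering the pushes from innermost outward in $T$ does nothing about this. Likewise, a ``maximal negligible disk'' usually leaves $N_\epsilon$, so it cannot be pushed slightly off $S$ inside $N_\epsilon$ with a boundary collar fixed. Note also that Step~1 never uses $\Gamma_S(T)=\emptyset$, and that hypothesis is exactly what makes the untangling possible.

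The missing idea, which is the paper's proof, is to run the innermost disk argument on the level spheres $S\times\{\pm\epsilon\}$ rather than on $S$, with $\epsilon$ chosen so that $f|_T$ has no critical points in $N_\epsilon\setminus S$. There $T$ is transverse, and by Lemma~\ref{lem:empty} every loop of $T\cap\bd{N_\epsilon}$ is inessential in $T$. Take such a loop $c$ innermost in $\bd{N_\epsilon}$. Then the disk $D\subset\bd{N_\epsilon}$ it bounds meets $T$ only in $c$, so the sphere $E\cup D$ meets $T$ only in $E$, and the rest of $T$ lies on one side of it. Irreducibility then lets you push $E$ across the other ball onto a parallel copy of $D$. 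Iterating yields $T'\subset S\times(-\epsilon,\epsilon)$ isotopic to $T$. The surface cannot end up in a ball outside $N_\epsilon$, since a compressing disk on the side away from $S$ would then give a shadow. Finally, $T'\cap S$ is $T\cap S$ minus exactly the pieces lying in the disks $E$ that were pushed away.

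Your second step also reasons incorrectly. An essential loop in a region $R$ of $T\setminus S$ can be isotoped into a level near the frontier only if it is parallel to a frontier curve. Moreover, ``planar with inessential boundary'' does not by itself make $R$ a disk. In the paper the cell structure comes directly from the confinement: $f|_{T'}$ has no critical points in $N_\epsilon\setminus S$ except one center on each cap. So each component of $T'\setminus S$ is a collar of a single loop capped by one disk, i.e.\ an open disk containing a single maximum or minimum. These are the $2$- and $0$-cells of the statement. The $0$-cells are components of $T\setminus S$ below $S$, not isolated tangency points, and $T\cap S$ is meant to remain a graph. So the ``main obstacle'' you leave open (ruling out surviving $1$-dimensional pieces or arcs of tangency) is not what needs to be proved.
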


\begin{proof}
    Choose $\epsilon>0$ such that there is no critical point of $f|_T$ in $N_\epsilon \setminus S$. By a standard innermost disk argument, we can convert $T$ into a surface $T'$ in $S \times (-\epsilon,\epsilon)$. See Figure~\ref{fig:pinch}. Since $S^3$ is irreducible, Lemma~\ref{lem:empty} implies that $T'$ is isotopic to $T$. By construction, there is no negligible intersection between $T'$ and $S$ while the other intersections are unchanged. Moreover, each component of $T' \setminus S$ contains exactly one critical point of $f|_{T'}$, which must be a center singularity. Thus, each component of $T' \setminus S$ must be either an open $0$- or $2$-cell, depending on whether it is above or below $S$. 
\end{proof}

\begin{figure}
    \begin{overpic}[scale=.5]{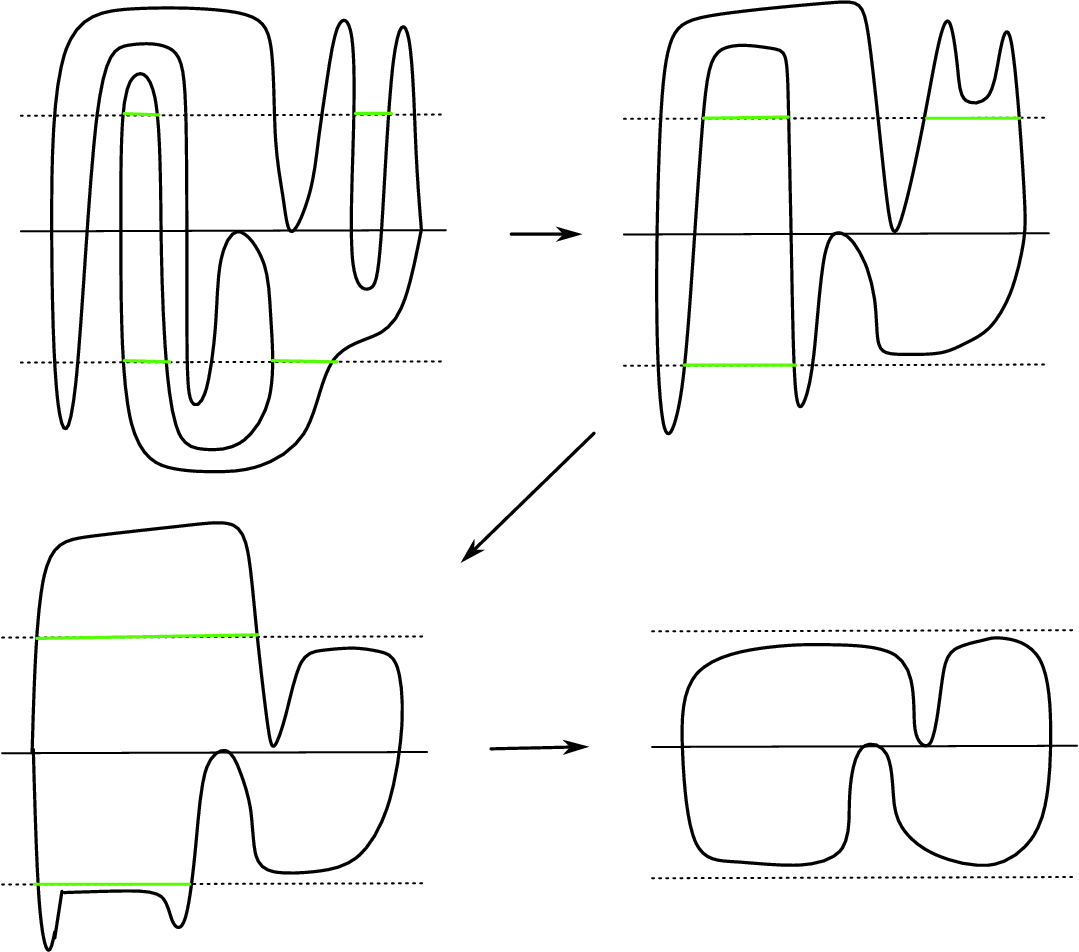}
        \put(80,120){$T$}
        \put(250,20){$T'$}
        \put(-10,170){$S$}
        \put(-25,200){$S \times \epsilon$}
        \put(-30,137){$S \times -\epsilon$}
    \end{overpic}    
    \caption{A schematic proof of Lemma~\ref{lem:empty->cells}. Green segments indicate innermost disks in $S \times \{\pm \epsilon\}$.} 
    \label{fig:pinch}
\end{figure}

\section{Sweepouts and the graphics}\label{sec:sweepout}

We here briefly review sweepout theory. We first see that $\Gamma(T)$ contains a homotopically nontrivial sphere, and then use it to construct a genus $g$ sweepout $\{T_t\}$. 

\subsection{A nontrivial sphere in the disk complex}\label{sec:nontrivial-sphere}

A key to our proof of Theorem~\ref{thm:Powell} is the following fact. 

\begin{lemma}[store=index]\label{lem:well-defined_index}
    The homotopy index of $\Gamma(T)$ is at most $2g-1$.
\end{lemma}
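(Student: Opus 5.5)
The plan is to exhibit an explicit map $\psi_0:S^{d-1}\to\Gamma(T)$ with $d\le 2g-1$ and then prove that it is not null-homotopic. To do so, we assume it extends over $B^d$ and derive a contradiction with the balancing analysis of Section~\ref{sec:diskcomplex}. This splits the argument into a combinatorial construction followed by a topological obstruction. I expect the obstruction to be the hard part.

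\emph{Construction of the sphere.} Using the standard spine $K\subset S$, write $T$ as a connected sum of $g$ genus-one Heegaard splittings of $S^3$ along $g-1$ reducing spheres. For a single genus-one summand, the meridian disk $a^i\subset A$ and the dual disk $b^i\subset B$ span a copy of $S^0$ in $\Gamma(T)$. For each of the remaining summands, pairing it with an adjacent $1$-handle, I would look for a $4$-cycle of disks in $\Gamma(T)$ whose consecutive members are disjoint and whose opposite members intersect. Natural candidates are $a^i$, $b^i$, the band-sum disk of $a^i$ and $a^{i+1}$, and its dual in $B$. Disks coming from different summands are disjoint, and $\Gamma(T)$ is flag. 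Hence the join
\[
S^0 * \underbrace{S^1 * \cdots * S^1}_{g-1}\cong S^{2g-2}
\]
embeds in $\Gamma(T)$ as a subcomplex. This gives $\psi_0:S^{2g-2}\to\Gamma(T)$, which is the dimension needed for $d=2g-1$. The main check in this step is that the chosen disks really are pairwise disjoint exactly as the join requires.

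\emph{Nontriviality.} Suppose instead that $\psi_0$ is null-homotopic, so it extends to $\Psi:B^{2g-1}\to\Gamma(T)$. Following Bachman, I would realize $\Psi$ by a family of surfaces $\{T_t\}_{t\in B^{2g-1}}$. Over each simplex of $\Gamma(T)$, the surface is obtained from $T$ by compressing partway along the corresponding disjoint disks, interpolating by barycentric coordinates. Along $\partial B^{2g-1}$ the family is the explicit one, and as we move around it $T$ degenerates in every direction allowed by the join. I would then compare with the sphere sweepout $S_s=f^{-1}(s)$ and use a parity or degree count on the graphic in the $(t,s)$-parameter space. The goal is to find a parameter $t$ and a level $s$ at which $T_t$ is in a balanced position with respect to $S_s$ and, in addition, the configuration witnessing balance is not detected by a vertex of $\Gamma_{S_s}(T_t)$. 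The nontriviality of the join along $\partial B^{2g-1}$ should force this. Proposition~\ref{prop:empty-contractible} then leaves only the possibility $\Gamma_{S_s}(T_t)=\emptyset$. By Lemma~\ref{lem:empty->cells}, $T_t$ can then be isotoped so that $T_t\setminus S_s$ consists only of open $0$- and $2$-cells. This is impossible for a closed surface of genus $g\ge 1$ on Euler characteristic grounds, since such a decomposition would give $\chi(T_t)>0$. That contradiction shows $\pi_{2g-2}(\Gamma(T))\ne 1$, so the homotopy index is at most $2g-1$.

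\emph{Main obstacle.} I expect the hardest step to be the middle one: extracting from the null-homotopy a parameter $t$ at which $T_t$ is balanced with $\Gamma_{S_s}(T_t)$ empty, rather than merely contractible. This requires a careful labelling of the regions of the graphic, into ``mostly above'', ``mostly below'' and ``has a shadow above or below''. It also requires a Sperner-type or degree argument showing that these labels cannot cover $B^{2g-1}\times[-1,1]$ compatibly with the boundary behaviour coming from $\psi_0$. I would first attempt the labelling exactly as in Bachman's proof of his Theorem~3.2, with the contractible case of Proposition~\ref{prop:empty-contractible} supplying the cone that allows labels to be extended inward.
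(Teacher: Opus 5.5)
Your nontriviality step has a genuine gap: the contradiction you aim for is not a contradiction. Suppose $T_t\setminus S_s$ consists of $F\ge 2$ open disks. Then (for $g\ge 1$) $T_t\cap S_s$ is a $4$-valent graph whose $p'$ vertices are the saddle tangencies, so $\chi(T_t)=p'-2p'+F=F-p'\ge 2-p'$. This only forces $p'\ge 2g$ and is perfectly consistent with genus $g$ (e.g.\ $F=2$, $p'=2g$). The inequality $\chi(T_t)>0$ you invoke holds only when $T_t$ is transverse to $S_s$; such configurations with tangencies are exactly the normal form of Theorem~\ref{thm:normal_position}. A contradiction could come only from the general-position bound that at most $d+1$ saddle tangencies occur simultaneously over a $d$-parameter family. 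For $d=2g-1$ that bound is $p\le 2g$, which is compatible with $p\ge 2g$. So the sweepout and balancing machinery proves the complementary inequality $d\ge 2g-1$ (this is how the paper uses it, in Lemma~\ref{lem:local_incompressibility} and Theorem~\ref{thm:h-index}). It cannot certify that a given $(2g-2)$-sphere is essential. It also consumes nontriviality rather than producing it: a point with $\Sigma_t=\emptyset$ is found precisely because $\psi_0$ does not extend over $B^d$. Once you assume the extension $\Psi$ exists, nothing forces such a point. Appealing to ``the nontriviality of the join along $\partial B^{2g-1}$'' then assumes what you are trying to prove.

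What is missing is an independent, algebraic source of essentiality. The paper cites Campisi--Torres and sketches the argument via Broaddus. For a marked point $*\in T$, the boundary of a suitable $(2g-1)$-simplex $\sigma_0$ of the arc complex $\mathcal{A}(T^*)$ is an essential $(2g-2)$-sphere in the arc complex at infinity. Choosing $\sigma_0$ so that every arc lies on the boundary of a compressing disk, the homotopy equivalences $\Psi_0$ and $\Psi_1$ carry $\partial\sigma_0$ to a sphere in $\mathcal{C}(T)$ whose vertices all bound compressing disks. This gives a sphere in $\Gamma(T)$ that is essential because its image in $\mathcal{C}(T)$ is.

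Your construction step would also need repair. With your candidates, $a^i$ and $b^i$ meet, so they must be opposite vertices of the $4$-cycle. That forces the band sum $c$ of $a^i$ and $a^{i+1}$ to be disjoint from $b^i$, yet $\partial c$ has algebraic intersection $\pm 1$ with $\partial b^i$. It also has algebraic intersection $\pm 1$ with $\partial b^{i+1}$, which breaks the disjointness required between join factors. Already for $g=2$, the only disks disjoint from both $a^1$ and $b^1$ are $a^2$, $b^2$ and the two reducing disks, and these span a contractible subcomplex. So any suspension over $\{a^1,b^1\}$ of this kind is null-homotopic.
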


In fact, we can prove that the homotopy index of $\Gamma(T)$ is exactly $2g-1$. See Section~\ref{sec:bound4index} for details. 

\begin{proof}
    This follows directly from \cite[Theorem~3.2.2]{Campisi-Torres}. Nonetheless, we include a proof for completeness. 
    
    Let $T^*$ denote the surface $T$ with a marked point $* \in T$. The \emph{arc complex} $\mathcal{A}(T^*)$ is the simplicial complex whose $k$-simplexes are isotopy classes of systems of $k+1$ essential arcs $\{\alpha_i\}^k_{i=0}$ such that $\bd{\alpha_i}=*$ and $\interior \alpha_i \cap \interior \alpha_j=\emptyset$ for $i \neq j$. The \emph{arc complex at infinity} $\mathcal{A}_\infty(T^*)$ is the subcomplex of $\mathcal{A}(T^*)$ consisting of those simplexes that are represented by non-filling arc systems. In particular, $\mathcal{A}_\infty(T^*)$ contains the $(2g-2)$-skeleton of $\mathcal{A}(T^*)$. 

    Similarly, the \emph{curve complex} $\mathcal{C}(T)$ (resp. $\mathcal{C}(T^*)$) is the simplicial complex whose $k$-simplex is represented by a system of $k+1$ pairwise disjoint essential simple closed curves in $T$ (resp. $T^*$).  
    
    There are canonical homotopy equivalences 
    \begin{gather*}
        \Psi_0:\mathcal{A}^{\circ\circ}_\infty(T^*) \to \mathcal{C}^\circ(T^*),\\
        \Psi_1:\mathcal{C}(T^*) \to \mathcal{C}(T),  
    \end{gather*}
    where $\mathcal{A}^{\circ\circ}_\infty(T^*)$ and $\mathcal{C}^\circ(T^*)$ denote the second and first barycentric subdivision of $\mathcal{A}_\infty(T^*)$ and $\mathcal{C}(T^*)$, respectively. See, e.g.,  \cite{Broaddus} for a detailed description. (Roughly speaking, $\Psi_0$ maps each barycenter of a simplex spanned by essential arcs $\alpha_i$ to the barycenter of the simplex that is spanned by the boundary loops of a neighborhood of $\bigcup \alpha_i$ in $T$. On the other hand, $\Psi_1$ is given by forgetting the marked point.) 

    In fact, our lemma is a consequence of a result by Brouddus \cite{Broaddus}: Let $\sigma_0$ be a $(2g-1)$-simplex of $\mathcal{A}(T^*)$ shown in Figure~\ref{fig:arc_systtem}. Then, $\bd{\sigma_0} \subset \mathcal{A}_\infty(T^*)$ is a nontrivial sphere of dimension $2g-2$.  (In fact, $\bd{\sigma_0}$ represents a generator of $\lft{H}_{2g-2}(\mathcal{A}_\infty(T^*))$ as $\MCG(T^*)$-module.) Here, the point is that we can choose such $\sigma_0$ so that each arc lies entirely in the boundary of a compressing disk for $T$, as shown in Figure~\ref{fig:arc_systtem}. By winding the definitions of $\Psi_0$ and $\Psi_1$, we can see that the image of $\bd{\sigma_0}$ in $\mathcal{C}(T)$ is spanned by a collection of vertices, each of which is represented by a loop that bounds a compressing disk in $S^3$. Thus, this collection of disks must represent a nontrivial sphere in $\Gamma(T)$, which proves the lemma. 
\end{proof}

\begin{figure}
    \begin{overpic}[scale=.5]{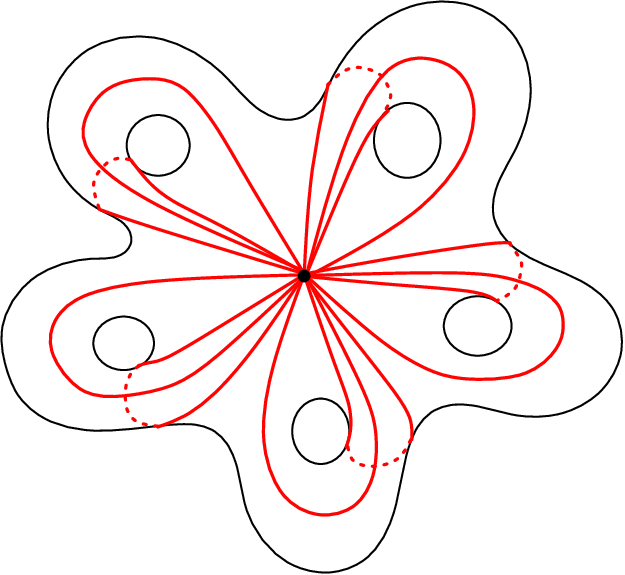}
    \end{overpic}    
    \caption{A $(2g-1)$-simplex $\sigma_0$ of $\mathcal{A}(T^*)$.}
    \label{fig:arc_systtem}
\end{figure}

\subsection{A sweepout by genus $g$ Heegaard surfaces}\label{sec:construction-sweepout}

In what follows, fix a triangulation $\sSph_0$ of $(d-1)$-sphere, $d \le 2g-1$, and a homotopically nontrivial simplicial map $\psi_0:\sSph_0 \to \Gamma(T)$. The existence of such a pair of $\sSph_0$ and $\psi_0$ is guaranteed by Lemma~\ref{lem:well-defined_index}. 

We will construct a sweepout $\{T_t\}_{t \in B^d}$ as follows. This can be done in three steps. First, for each vertex $v$ of $\sSph_0$, choose a compressing disk $D_v$ that represents $\psi_0(v)$. Let $C_v$ be a product neighborhood of $D_v$. Moreover, we may choose $C_v$ so that they satisfy the following property: If $v_0,v_1,\ldots,v_k$ span a $k$-simlex of $\sSph_0$, then $C_{v_i}$ are disjoint from each other. 

Next, we define $T_v$ for every vertex $v$ of $\sSph_0$. For $r \in [0,1]$, consider a map  $\rho_r:B^2 \times [-1,1]\to B^2 \times [-1,1]$ given by 
\begin{eqnarray*}
    \rho_r(x,u):=((1-r\beta(u))x,u),
\end{eqnarray*}
where $\beta:[0,1] \to [0,1]$ is a bump function. Fix an identification $\iota_v:B^2 \times [-1,1] \to C_v$ and put $\eta^r_v:=\iota_v \circ \rho_r \circ \iota^{-1}_v$. Define $T_v$ to be $\eta^1_v(T \cap C_v)$ in $C_v$ and  $T_v=T$ in the complement of $C_v$. 

Now viewing $B^d$ as a cone over $\sSph_0$, we have a triangulation $\sBall_0$ of $B^d$. Let $\sigma$ be a $d$-simplex spanned by $0=v_0,v_1,\ldots,v_d$. Identify $\sigma$ to a $d$-cube with its corners labeled by $v_0,v_1,\ldots,v_d$ as in Figure~\ref{fig:sweepout}. Every point $t \in \sigma$ can be written as $t=\sum r_k v_k$ for $r_k \in [0,1]$. Then, define $T_t$ by 
\begin{eqnarray*}
    T_t \cap \bigcup C_{v_i}=\eta^{r_1}_{v_1} \circ \eta^{r_2}_{v_2} \circ \cdots \circ \eta^{r_d}_{v_d}\left(T \cap \bigcup C_{v_i}\right)
\end{eqnarray*}
in $\bigcup C_v$ and $T_v=T$ in the complement of $\bigcup C_v$. Apply the same argument to other simplexes of $\sBall_0$ to extend $\{T_t\}$ over the entire space $B^d$. 

\begin{figure}
    \begin{overpic}[scale=.5]{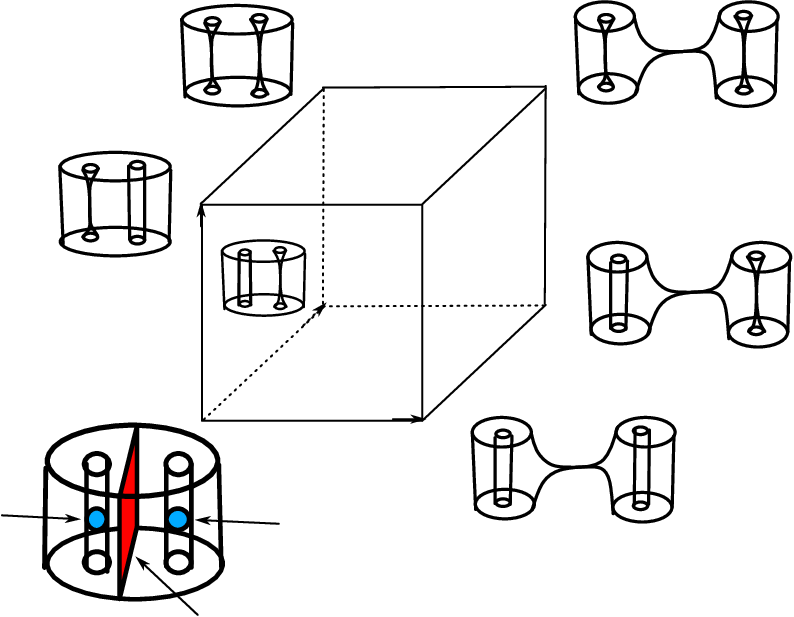}
        \put(50,-10){$D_{v_1}$}
        \put(70,20){$D_{v_2}$}
        \put(-20,20){$D_{v_3}$}
        \put(58,48){$0$}
        \put(100,40){$v_1$}
        \put(75,65){$v_2$}
        \put(57,103){$v_3$}
    \end{overpic}    
    \caption{A sweepout of $S^3$ by genus $2$ surfaces.}
    \label{fig:sweepout}
\end{figure}

We will refer to the intersection of $T_t$ with $C_v$ as a \emph{neck} of $T_t$. 

\subsection{The graphic}\label{sec:graphic} 

The \emph{graphic} determined by $\{T_t\}_{t \in B^d}$ and $\{S_s\}_{s \in [-1,1]}$ is the subset $\grph \subset B^d \times [-1,1]$ consisting of those points $(t,s)$ such that for some $u \in T_t \cap S_s$, $d(f|_{T_t})$ has rank $0$ at $u$. Here, we summarize basic properties of the graphic $\grph$. Although the parameter space for $t$ is $d$-dimensional, a local picture of the intersection between $T_t$ and $S_s$ is rather simple:  If $\{T_t\}$ and $\{S_s\}$ are in a general position, then the germ of $f|_{T_t}$ at $u \in T_t$ is either 
\begin{itemize}
    \item $f(x,y)=x$ ($u$ a regular point), 
    \item $f(x,y)=x^2+y^2$ ($u$ a center singularity), 
    \item $f(x,y)=x^2-y^2$ ($u$ a saddle singularity), or 
    \item $f(x,y)=x^3 \pm y^2$ ($u$ a birth/death singularity). 
\end{itemize}

The graphic $\grph$ consists of $d$-dimensional sheets, each of which corresponds to either a center or saddle singularity, as well as cuspidal loci that correspond to birth/death singularities. If $(t,s)$ lies in the intersection of $p$ sheets, then $T_t$ tangents $S_s$ in at most $p$ points. 

For our present purpose, saddle tangencies are only essential. So, we will focus on the \emph{reduced graphic} $\redg$, which is obtained from $\grph$ by deleting cuspidal loci and sheets that correspond to center singularities. For the convenience of references, we summarize these properties of $\redg$ in the following proposition. 

\begin{proposition}\label{prop:graphic}
    Suppose that $\{T_t\}$ and $\{S_s\}$ are in a general position. Then, $\redg$ is the union of $d$-dimensional sheets that intersect each other transversely. If $(t,s)$ lies in the intersection of $p$ sheets, then $T_t \cap S_s$ contains exactly $p$ saddle tangencies.   
\end{proposition}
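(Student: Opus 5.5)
The plan is to deduce Proposition~\ref{prop:graphic} from standard Cerf-theoretic transversality for families of functions. We treat the family $\{f|_{T_t}\}_{t \in B^d}$ as a $d$-parameter family of smooth functions on a fixed genus $g$ surface. To do so, we choose an ambient isotopy carrying $T$ to $T_t$, which exists by the construction in Section~\ref{sec:construction-sweepout}, and pull back. Consider the map $F: T \times B^d \to [-1,1] \times B^d$ given by $(u,t) \mapsto (f|_{T_t}(u), t)$. The critical points of the functions $f|_{T_t}$ form a subset $\Sigma \subset T \times B^d$. By the jet transversality theorem (Thom), after a small perturbation of the family (which is what ``general position'' means), the $1$-jet section $j^1$ is transverse to the zero section of the fiberwise cotangent bundle. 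Hence $\Sigma$ is a smooth $d$-dimensional submanifold.

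The key steps, in order, are as follows. First, we stratify $\Sigma$ by the degeneracy of the Hessian. Nondegenerate points (centers and saddles) form an open subset. The degenerate locus, on which the Hessian has a kernel, is generically of codimension one in $\Sigma$ and consists of birth/death points, which we have discarded. Second, we observe that the restriction of the projection $\Sigma \to B^d \times [-1,1]$, $(u,t) \mapsto (t, f(u))$, is an immersion near the nondegenerate points. This holds because near a Morse critical point the implicit function theorem parametrizes the critical point smoothly by $t$. So each saddle component of $\Sigma$ maps to a $d$-dimensional sheet in $B^d \times [-1,1]$, namely the graph of the critical value function $t \mapsto f(u(t))$. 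Third, we apply multijet transversality (on the $p$-fold fiber products $\Sigma^{(p)}$ minus diagonals) to ensure the following. Whenever $p$ distinct saddle points have the same $(t,s)$, the critical value functions $s_1(t),\dots,s_p(t)$ have differences $s_i - s_j$ whose differentials are linearly independent in the appropriate sense. This means the sheets meet transversely, and no two distinct saddles lie on the same sheet point without a transverse crossing. It follows that the preimage of $(t,s)$ in the saddle part of $\Sigma$ consists of exactly $p$ points when $(t,s)$ lies on exactly $p$ sheets. Those $p$ points are precisely the saddle tangencies of $T_t$ with $S_s$. Removing the center sheets and cusp loci then gives $\redg$ with the stated properties.

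The main obstacle is arranging general position within the restricted class of sweepouts built in Section~\ref{sec:construction-sweepout}. That construction involves the explicit neck-pinching maps $\eta^r_v$, and we must ensure that perturbation preserves its essential features: that $T_t$ is still a sweepout with the prescribed necks, and that $T_v$ still corresponds to $\psi_0(v)$. I would handle this by noting that the relevant properties are open conditions, or are stable under $C^\infty$-small perturbation. I would then perturb the height function $f$, or compose with a small ambient isotopy depending smoothly on $t$, rather than changing the combinatorial data, since transversality theorems give residual sets of such perturbations. A secondary subtlety is non-injectivity of the sheets themselves. A single saddle sheet might self-intersect, and this is handled by the same multijet argument applied to pairs of saddles from the same component of $\Sigma$.
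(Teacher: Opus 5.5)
Your proposal is correct and follows the same route as the paper, which gives no separate proof and simply records these properties as standard consequences of general position (the listed local models for $f|_{T_t}$). Your jet and multijet transversality argument is the Cerf-theoretic justification behind that summary; two minor points of care are that it should be run over $\interior B^d$ (or $B_r$), since the necks become pinched as $t\to\bd{B^d}$ and $T_t$ is then not an embedded copy of $T$, and that for $d\ge 2$ the degenerate locus also contains higher singularities such as swallowtails and umbilics, which is harmless because all of it is discarded in passing to $\redg$.
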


\section{Lifting a manifold in the parameter space to the disk complex}\label{sec:balanced_sphere}

In this section, we see that arguments in \cite{Bachman} can be adapted to Heegaard spittings of $S^3$ to extract information about the disk complex from sweepouts. Let $\{T_t\}$ and $\{S_s\}$ be as before. The first proposition is a typical application of the double sweepout argument:  

\begin{proposition}\label{prop:balancedsphere}
    Let $g > 0$. Then, there exists a smooth function $z:B^d \to [-1,1]$ with the following property. For each $t$, $S_{z(t)}$ is a balancing sphere for $T_t$.  
\end{proposition}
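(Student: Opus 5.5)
The plan is the standard double sweepout argument, done fiberwise over $B^d$ and then smoothed. For fixed $t$, consider the one-parameter family $S_s$, $s \in [-1,1]$. Define
\begin{eqnarray*}
    U_t:=\{s : T_t \text{ is mostly above } S_s\}, \qquad L_t:=\{s : T_t \text{ is mostly below } S_s\}.
\end{eqnarray*}
(Here $X^-_s$ should be read as the side of $S_s$ containing $f^{-1}(-1)$, so that the two conditions are monotone in $s$.) For $s$ near $-1$ the sphere $S_s$ bounds a tiny ball near the minimum of $f$ that meets $T_t$ at most in a disk, so $s\in U_t$; symmetrically, $s\in L_t$ for $s$ near $1$. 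Monotonicity: if $T_t$ is mostly above $S_s$ and $s'<s$, then $T_t \cap X^-_{s'} \subset T_t\cap X^-_s$ lies in a disk, so $U_t$ is an initial interval and $L_t$ a terminal interval. Both are open sets: if $T_t\cap X^-_s$ lies in a disk of $T_t$, a slightly larger closed neighborhood still does, by compactness.

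The key step is to show $U_t\cap L_t=\emptyset$, and this is where $g>0$ enters. If both held at some $s$, then $T_t = (T_t\cap X^-_s)\cup(T_t\cap X^+_s)$ would be a union of two subsets, each contained in a disk of $T_t$. Shrinking slightly, $T_t$ would be covered by the interiors of two disks $E_1,E_2$ with $E_1\cup E_2=T_t$. A union of two open disks (open subsets homeomorphic to $\mathbb{R}^2$) covering a closed surface forces it to be $S^2$: by Mayer--Vietoris with the intersection open, $H_1(T_t)$ injects into $\tilde H_0(E_1\cap E_2)$, which has no torsion-free "loop" contribution compatible with $H_2(T_t)\cong\mathbb{Z}$ and genus $g>0$. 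A cleaner route is that the covering gives $\mathrm{cat}(T_t)\le 2$, whereas Lusternik--Schnirelmann category of a genus $g\ge1$ surface is $3$. Either way, genus $g>0$ is contradicted. Hence $[-1,1]\setminus(U_t\cup L_t)$ is a nonempty closed interval $I_t=[a(t),b(t)]$, and every $s\in I_t$ gives a balancing sphere.

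Finally I need a smooth selection. For each $t_0$, openness of the conditions together with continuity of $t\mapsto T_t$ (an ambient isotopy locally) shows that $U_t,L_t$ vary lower semicontinuously. Concretely, if $s\in U_{t_0}$ then $s\in U_t$ for $t$ near $t_0$, so $a$ is lower semicontinuous and $b$ is upper semicontinuous. The main obstacle is that $I_t$ may degenerate to a point, so a continuous function with graph inside $I_t$ needs care. I would handle this by the standard partition of unity argument. Around each $t_0$, choose $s_0\in I_{t_0}$; by the semicontinuity, for $t$ near $t_0$ we have $s_0\notin U_t$ only if... which may fail. So instead I would work with the open sets $U_t,L_t$ directly. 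The sets $W_U=\{(t,s): s\in U_t\}$ and $W_L$ are disjoint open subsets of $B^d\times[-1,1]$, with $W_U$ containing $B^d\times\{-1\}$ and $W_L$ containing $B^d\times\{1\}$, and each fiber of each is an interval. Choose a smooth function $F$ on $B^d\times[-1,1]$ with $F=-1$ near $\overline{W_U}$ (closure taken fiberwise within the complement of $W_L$) and $F=1$ on $W_L$, strictly increasing in $s$ on the complement. For example, take a smooth Urysohn function averaged with $s$ to enforce monotonicity. Then define $z(t)$ as the unique zero of $F(t,\cdot)$; it is smooth by the implicit function theorem if $\partial_sF>0$ there. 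Its value avoids $W_U\cup W_L$, hence is balancing. Achieving $\partial_s F>0$ while separating two closed disjoint sets is routine but is the step I expect to need the most bookkeeping.
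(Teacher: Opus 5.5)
Your fiberwise analysis is the paper's. Your $U_t$, $L_t$ are its $I_a(t)$, $I_b(t)$, shown to be an initial and a terminal open interval, and $g>0$ is used exactly to make them disjoint. For that disjointness the paper perturbs $s$ so that $T_t$ meets $S_s$ transversely and argues with the loops of $T_t\cap S_s$. Your Lusternik--Schnirelmann argument is a valid substitute: if $T_t=E_1\cup E_2$ with $E_i$ open disks, take $\alpha,\beta\in H^1(T_t)$ with $\alpha\cup\beta\neq0$. Since $\alpha|_{E_1}=0=\beta|_{E_2}$, the product $\alpha\cup\beta$ factors through $H^2(T_t,E_1\cup E_2)=0$, which is a contradiction. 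The Mayer--Vietoris sentence, however, proves nothing on its own. It only yields $H_1(T_t)\cong\tilde H_0(E_1\cap E_2)$ and $H_2(T_t)\cong H_1(E_1\cap E_2)$, and an open annulus plus $2g$ open disks satisfies both. (Also, you need to enlarge, not shrink, the disks.) After this step the paper simply chooses $z(t)\in[s_a(t),s_b(t)]$ pointwise and says nothing about regularity. You are right that smoothness needs an argument, but the one you give does not work.

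The gap is the construction of $F$. A continuous $F$ with $F=-1$ on $W_U$ and $F=1$ on $W_L$ can exist only if $\overline{W_U}\cap\overline{W_L}=\emptyset$, and this fails in exactly the case you flagged. If $I_t=\{c\}$, then $(t,c)$ is the limit of $(t,s)\in W_U$ as $s\uparrow c$ and of $(t,s)\in W_L$ as $s\downarrow c$, so $F(t,c)$ would have to equal both $-1$ and $1$. Taking closures ``fiberwise within the complement of $W_L$'' does not change this. Nothing rules such fibers out: they occur, for instance, when the two saddles realizing $a(t)$ and $b(t)$ pass through a common level, a codimension-one event in a $d$-parameter family. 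So as written you get no continuous $z$, let alone a smooth one.

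The semicontinuity you proved does not suffice either. A lower semicontinuous $a$ below an upper semicontinuous $b$ is the wrong way round for the Kat\v{e}tov--Tong insertion theorem, and such pairs need not admit any continuous function between them (e.g.\ $a=\mathbf{1}_{(0,\infty)}\le b=\mathbf{1}_{[0,\infty)}$ on $\mathbb{R}$).

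The missing idea is that on $\interior B^d$, where $T_t$ moves by an isotopy, $a$ and $b$ are actually continuous. If $s$ is a regular value of $f|_{T_{t_0}}$, then for $t$ near $t_0$ the pair $(T_t,T_t\cap X^-_s)$ is isotopic to $(T_{t_0},T_{t_0}\cap X^-_s)$. Hence whether $s\in U_t$ is locally constant in $t$. Choosing regular values just below and just above $a(t_0)$ (they are dense) gives continuity of $a$, and likewise of $b$. Then $(a+b)/2$ is a continuous selection. Since each fiber $[a(t),b(t)]$ is convex, local smooth selections patch together by a partition of unity. Constant local selections exist wherever $a(t_0)<b(t_0)$, so smoothness reduces to a general-position analysis near the points where $a=b$, which you would still have to supply.
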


\begin{proof}
    For $t \in B^d$, let $I_a(t)$ (resp. $I_b(t)$) be the subset of $[-1,1]$ consisting of those points $s$ such that $T_t$ is mostly above (resp. below) $S_s$. 
    Observe that 
    \begin{itemize}
        \item if $s$ is sufficiently close to $-1$, $T_t$ is mostly above $S_s$, and 
        \item if $-1<s'<s<1$ and $T_t$ is mostly above $S_s$, then $T_t$ is also mostly above $S_{s'}$. 
    \end{itemize}
    Thus, we conclude that there exists a point $s_a(t) \in (-1,1)$ such that $I_a(t)=[-1,s_a(t))$. A symmetric argument shows that there exists a point $s_b(t) \in (-1,1)$ such that $I_b(t)=(s_b(t),1]$. 
    
    We see that $s_a(t) \le s_b(t)$. Indeed, if $s_b(t)<s_a(t)$, then there is a point $s \in (s_a(t),s_b(t))$ such that $T_t$ is both mostly above and mostly below $S_s$. After perturbing $s$, we may assume that $T_t$ and $S_s$ intersect transversely. Then, any loop in $T_t \cap S_s$ bounds the two disks on opposite sides of each other. This shows that $T$ is diffeomorphic to $S^2$, contradicting the assumption. 
    
    Now define a function $z$ by choosing a point $z(t)$ in $[s_a(t),s_b(t)]$. 
\end{proof}

For brevity, we will write $S_t$ for $S_{z(t)}$ when there is no danger of confusion. By the definition of $T_t$, there is a canonical diffeomorphism $(S^3,T) \to  (S^3,T_t)$. We will identify $\Gamma(T)$ with $\Gamma(T_t)$ through this identification. For each $t$, let $\Sigma_t$ denote the preimage of $\Gamma_{S_t}(T_t) \subset \Gamma(T_t)$ in $\Gamma(T)$. 

In what follows, we will identify $B^d$ with the graph of the function $z:B^d \to [-1,1]$. The next proposition is essentially identical to Claim~3.3 of \cite{Bachman}. 

\begin{proposition}\label{prop:inclusions}
    Let $\sBall$ be a triangulation of $B^d$ such that the closure of $\redg \cap B^d$ is a subcomplex of $\sBall$. Let $\sigma$ be a simplex of $\sBall$. 
    \begin{enumerate}
        \item If $t,t' \in \interior \sigma$, then $\Sigma_t=\Sigma_{t'}$.
        \item If $t' \in \bd{\sigma}$ and $t \in \interior \sigma$, then $\Sigma_{t'} \subset \Sigma_t$. 
    \end{enumerate}
\end{proposition}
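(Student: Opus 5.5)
The plan is to follow Bachman's proof of Claim~3.3: first show that $\Sigma_t$ is locally constant on the complement of $\redg$ and of the points where the chosen balancing sphere changes type. Then analyze how $\Sigma_t$ can change when $t$ crosses a sheet of $\redg$. Throughout, $B^d$ is identified with the graph of $z$, so moving $t$ moves both $T_t$ and $S_t=S_{z(t)}$.

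The first step concerns two nearby parameters $t,t'$ such that the segment $\{(t'',z(t''))\}$ joining them avoids $\redg$. Along this path the pair $(T_{t''},S_{t''})$ changes only through isotopies and through center tangencies, and birth/death singularities are creations or cancellations of a center with a saddle. Since every saddle sheet has been removed from the path, no birth/death can occur along it either, because each one lies on the closure of a saddle sheet. Hence $T_{t''}\cap S_{t''}$ changes only by ambient isotopy of the pair and by the appearance or disappearance of small inessential loops bounding disks in $T_{t''}$ near center tangencies. Identify $\Gamma(T_{t''})$ with $\Gamma(T)$ via the canonical diffeomorphisms. I then argue that a compressing disk $D$ admits a shadow for $S_t$ if and only if it admits one for $S_{t'}$. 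The shadow condition only asks that $D'$ miss $S$ and meet $T$ in inessential loops. Center tangency changes alter the intersection by trivial loops, and a shadow can be pushed off these small caps or allowed to cross them without losing the shadow property. This gives $\Sigma_t=\Sigma_{t'}$.

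Statement~(1) then follows. By hypothesis the closure of $\redg\cap B^d$ is a subcomplex of $\sBall$, so $\interior\sigma$ either lies in $\redg$ or is disjoint from it. In the disjoint case the first step applies to any path in the connected set $\interior\sigma$. In the other case $\interior\sigma$ lies in the transverse intersection of some set of $p$ sheets and meets no other sheet. By Proposition~\ref{prop:graphic}, $T_t\cap S_t$ then has exactly $p$ saddle tangencies for every $t\in\interior\sigma$, and these persist as $t$ moves in $\interior\sigma$. So the same isotopy-plus-centers argument applies.

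For statement~(2), take $t'\in\bd\sigma$. Then $t'$ lies on all the sheets containing $\interior\sigma$ and possibly on more. Pushing $t'$ slightly into $\interior\sigma$ resolves the extra saddle tangencies of $T_{t'}\cap S_{t'}$ into a transverse picture. Let $D'$ be a shadow of $D$ relative to $S_{t'}$. Since $D'\cap S_{t'}=\emptyset$ and $D'$ is compact, $D'$ has positive distance from $S_{t'}$, so a small perturbation of $(T,S)$ supported near $S_{t'}$ does not touch $D'$. After adjusting $\bd D'$ by an isotopy in $T$ that follows the small motion of $T$ near $D'$, the disk remains a shadow for $S_t$. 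The intersection of $\interior D'$ with $T$ is unchanged away from a neighborhood of $S_{t'}$, so its loops stay inessential. Hence $\Sigma_{t'}\subset\Sigma_t$.

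The main obstacle is making rigorous that crossing center sheets, and the perturbations near saddle points, preserve the existence of shadows. In particular, the loops of $\interior D'\cap T$ must stay inessential when $T$ moves, and the boundary of the shadow must track the isotopy of $T$. I would handle this using Bachman's open-condition argument: the shadow condition is open under small perturbations supported away from $D'$.
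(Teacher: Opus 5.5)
Your proposal is correct and follows essentially the paper's route. For (1), you use that the intersection pattern of $T_t$ and $S_t$ is constant along a stratum of $\redg$, up to negligible loops created or destroyed at center tangencies. For (2), you use that a shadow for $S_{t'}$ stays a positive distance from $S_{t'}$, so resolving the extra saddle tangencies near $S_{t'}$ leaves it a shadow for $S_t$; this is the paper's argument of pushing $\bd{D}$ off a thin product neighborhood $N$ of $S_{t'}$, and combined with (1) it gives the inclusion for every $t\in\interior\sigma$.

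One small correction: the dichotomy in (1) should be stated for the closure of $\redg$, since an open simplex can lie in a cuspidal locus, which is in the closure but not in $\redg$. Your isotopy-plus-centers argument still handles that case, because the birth/death tangency persists along such a simplex.
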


\begin{proof}
    Let $\mathcal{R}_p \subset B^d$ denote the set of points that lie in the intersection of exactly $p$ sheets of $\redg$. By definition, $B^d=\cup^{d+1}_{p=0} \mathcal{R}_p$.  

    First, suppose that $t,t' \in \interior \sigma$. Since the closure of $\redg \cap B^d$ is a subcomplex of $\sBall$, $t$ and $t'$ are in the same path component of $\mathcal{R}_p$. So, the intersection pattern of $T_t$ and $S_t$ is the same as that of $T_{t'}$ and $S_{t'}$, up to negligible intersections. This implies that $\Sigma_t=\Sigma_{t'}$, which proves (1). 

    Next, suppose that $t' \in \bd{\sigma}$ and $t \in \interior \sigma$. Then, $T_t$ and $S_t$ are obtained from $T_{t'}$ and $S_{t'}$ by resolving some saddle tangencies in $T_{t'} \cap S_{t'}$. This implies that $T_{t'} \setminus N \subset T_t \setminus N$ for some product neighborhood $N$ of $S_{t'}$. Thus, if a disk $D$ represents a vertex of $\Gamma_{S_{t'}}(T_{t'})$, $\bd{D}$ can be pushed off $N$ by an isotopy so that $D$ represents a vertex of $\Gamma_{S_t}(T_t)$. This proves (2). 
\end{proof}

The main result of this section is 

\begin{lemma}\label{lem:extension}
    Let $Y$ be a submanifold of $\interior B^d$. Suppose that $\Sigma_t \neq \emptyset$ for any $t \in Y$. Then, there exists a map $\psi:Y \to \Gamma(T)$ such that $\psi(t) \in \Sigma_t$ for any $t \in Y$. Moreover, $\psi$ is unique up to homotopy. 
\end{lemma}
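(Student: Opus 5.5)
Throughout, work with the triangulation $\sBall$ of Proposition~\ref{prop:inclusions}, subdivided if necessary so that $Y$ is (up to a small isotopy) a subcomplex, or at least so that every simplex of $Y$ lies in the interior or the boundary of simplices of $\sBall$. The strategy is the one behind Claim~3.3 of \cite{Bachman}: build $\psi$ on a subdivision of $Y$ simplex by simplex, and use contractibility of the sets $\Sigma_t$ to extend over higher skeleta.

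First, by hypothesis $\Sigma_t \neq \emptyset$ for every $t \in Y$. Proposition~\ref{prop:empty-contractible} then shows that each $\Gamma_{S_t}(T_t)$, and hence each $\Sigma_t$, is contractible. By Proposition~\ref{prop:inclusions}(1), $\Sigma_t$ depends only on the open simplex $\interior\sigma$ containing $t$, so write $\Sigma_\sigma$ for it. By Proposition~\ref{prop:inclusions}(2), $\Sigma_\tau \subset \Sigma_\sigma$ whenever $\tau$ is a face of $\sigma$. So we have a contravariant system of nonempty contractible subcomplexes of $\Gamma(T)$ indexed by the cells, with faces mapping into larger sets.

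Next, pass to the barycentric subdivision of the induced cell structure on $Y$; let $Y'$ be the resulting cell structure of $Y$. Each vertex of $Y'$ is the barycenter $b_\sigma$ of a cell $\sigma$, and each simplex of $Y'$ corresponds to a chain $\sigma_0<\sigma_1<\cdots<\sigma_k$. Such a simplex has its interior in $\interior\sigma_k$, and all its vertices have $\Sigma$ contained in $\Sigma_{\sigma_k}$.

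Define $\psi$ by induction on skeleta of $Y'$:
\begin{enumerate}
\item Send each vertex $b_\sigma$ to a point of $\Sigma_\sigma$.
\item Suppose $\psi$ is defined on the boundary of a simplex of $Y'$ with chain $\sigma_0<\cdots<\sigma_k$, with image lying in $\Sigma_{\sigma_k}$.
\item Since $\Sigma_{\sigma_k}$ is contractible, extend $\psi$ over that simplex inside $\Sigma_{\sigma_k}$.
\end{enumerate}
For a point $t$ in the open simplex, $t \in \interior\sigma_k$, so $\psi(t) \in \Sigma_t$. Boundary points lie in faces with smaller top cell $\sigma_j$, and there the image lies in $\Sigma_{\sigma_j}=\Sigma_t$ by induction. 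This produces the required $\psi$.

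For uniqueness, let $\psi_0,\psi_1$ be two such maps. Apply the same construction to $Y\times[0,1]$ with the system $t\mapsto\Sigma_t$ pulled back from $Y$. Take $\psi_0,\psi_1$ on $Y\times\{0,1\}$ as given data and extend cell by cell, again using contractibility of each $\Sigma_\sigma$ (the relative version of the obstruction argument). The output is a homotopy through maps satisfying $\psi_s(t)\in\Sigma_t$.

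The main obstacle is the bookkeeping. First, $\psi_0,\psi_1$ need not be cellular with respect to $Y'$. I would handle this by noting that any admissible map restricted to $\interior\sigma$ lands in $\Sigma_\sigma$, so the straight-line relative extension still works on $Y\times[0,1]$ cell by cell. Second, we need $Y$ to be transverse to the simplices of $\sBall$ so that it inherits a cell structure. I would arrange this by isotoping $Y$ slightly, or by working with the open-star covering of $Y$ together with a partition-of-unity nerve argument.
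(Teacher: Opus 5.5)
Your proposal is correct and is essentially the paper's proof: triangulate so that $Y$ and the closure of the graphic are subcomplexes, then extend over $Y$ skeleton by skeleton, using the constancy and monotonicity of $\Sigma$ from Proposition~\ref{prop:inclusions} and the contractibility of each nonempty $\Sigma_t$ from Proposition~\ref{prop:empty-contractible}; uniqueness up to homotopy also comes from that contractibility. Your barycentric subdivision and the relative extension over $Y\times[0,1]$ are just more careful bookkeeping for what the paper states in a line. One caution: do not isotope $Y$, since moving it changes the constraint $\psi(t)\in\Sigma_t$; instead choose the triangulation to contain $Y$ as a subcomplex, as the paper does.
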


Roughly speaking, Lemma~\ref{lem:extension} tells us that a submanifold in the parameter space can lift to $\Gamma(T)$ under an appropriate assumption, giving a converse direction of the correspondence from the disk complex to sweepouts described in Section~\ref{sec:sweepout}. The proof is based on ideas in \cite[Proof of Theorem~3.2]{Bachman}. The lemma can also be compared with \cite[Lemma~5.5]{Iguchi}, where a similar result is obtained using geometric ideas. 

\begin{proof}
    Let $\sBall$ be a triangulation of $B^d$ such that $\sBall$ contains $Y$ and the closure of $\grph \cap B^d$ as subcomplexes of $\sBall$. Let $Y^k$ denote the $k$-skeleton of $Y$. We construct a map $\psi$ by induction. For $t \in Y^0$, define $\psi(t)$ to be one of vertices of $\Sigma_t \neq \emptyset$. Now suppose that $\psi$ has already been defined on $Y^{k-1}$. Let $\sigma$ be a $k$-simplex of $Y$ and $t_0 \in \interior \sigma$. By induction and Proposition~\ref{prop:inclusions} (1), $\psi(t)$ is in $\Sigma_{t_0}$ for any $t \in \bd{\sigma}$. Since $\Sigma_{t_0}$ is contractible by Proposition~\ref{prop:empty-contractible}, $\psi$ extends over $\sigma$ so that $\psi(t) \in \Sigma_{t_0}$ for any $t \in \sigma$. This means that $\psi(t) \in \Sigma_t$ for any $t \in \interior \sigma$ as $\Sigma_{t_0}$ is independent of the choice of $t_0$ by Proposition~\ref{prop:inclusions} (2). Applying the same construction to each $k$-simplex defines  $\psi$ on $Y^k$, completing the induction. 
    
    Finally, if $\psi'$ is another map such that $\psi'(t) \in \Sigma_t$ for $t \in Y$, then there exists a homotopy between $\psi'$ and $\psi$ since $\Sigma_t$ is contractible by Proposition~\ref{prop:empty-contractible}. This shows that $\psi$ is unique up to homotopy. 
\end{proof}

\section{Digression: A lower bound on the homotopy index}\label{sec:bound4index}

In this section, we present the two applications (Theorems~\ref{thm:normal_position} and \ref{thm:h-index}) of the results obtained through Sections~\ref{sec:diskcomplex} to \ref{sec:balanced_sphere}. Although these results are not used in the proof of Theorem~\ref{thm:Powell}, their proofs can be seen as a prototype of the arguments in Section~\ref{sec:parallelization}. A key is the following lemma: 

\begin{lemma}\label{lem:local_incompressibility}
    Let $\{T_t\}$, $\{S_s\}$ and $z:B^d \to [-1,1]$ be as before. Then, there exists a point $t \in \interior B^d$ such that $\Sigma_t=\emptyset$. 
\end{lemma}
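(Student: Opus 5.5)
We argue by contradiction, following the scheme of \cite[Proof of Theorem~3.2]{Bachman}. Suppose that $\Sigma_t \neq \emptyset$ for every $t \in \interior B^d$. We will produce a null-homotopy of $\psi_0$, contradicting the choice of $\psi_0$ as homotopically nontrivial.

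First, shrink the boundary. Let $Y:=\bd{B^d_{1-\delta}}$ be a concentric sphere of radius $1-\delta$, chosen to lie in $\interior B^d$ and in general position with respect to $\sBall$. Let $Z:=B^d_{1-\delta}$ be the closed ball it bounds. Both are submanifolds of $\interior B^d$ on which $\Sigma_t \neq \emptyset$. Lemma~\ref{lem:extension} then gives a map $\psi:Z \to \Gamma(T)$ with $\psi(t) \in \Sigma_t$. By the uniqueness part of that lemma, $\psi|_Y$ is homotopic to any other map $Y \to \Gamma(T)$ satisfying $\psi'(t)\in\Sigma_t$. Since $\psi|_Y$ extends over the ball $Z$, it is null-homotopic.

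Second, compare $\psi|_Y$ with $\psi_0$. The target statement is that, for $\delta$ small and $t \in Y$ near a point of the cell of $\sSph_0$ spanned by $v_0,\dots,v_k$, the disk $D_{v_i}$ (more precisely $\psi_0$ of nearby vertices) lies in $\Sigma_t$, i.e.\ it admits a shadow disjoint from $S_t$. If this holds, then $\psi_0\circ(\text{radial projection})$ is itself an admissible lift $Y \to \Gamma(T)$ in the sense of Lemma~\ref{lem:extension}. Choosing the lift carefully simplex by simplex, so that $\psi(t)$ and $\psi_0(\pi(t))$ always lie in the common contractible complex $\Sigma_t$ (Proposition~\ref{prop:empty-contractible}), would then give a homotopy $\psi|_Y\simeq\psi_0$. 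Hence $\psi_0$ is null-homotopic, which is the desired contradiction.

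The main obstacle is this second step: showing that near $\bd{B^d}$ the compressing disks $D_{v}$ attached to the nearby vertices really have shadows avoiding the balancing sphere $S_t$. For $t$ near a vertex $v$, $T_t$ is essentially $T$ with the neck in $C_v$ pinched to an arc. The plan is as follows. First, use that $S_t$ is balancing, so $T_t$ is neither mostly above nor mostly below it. Then argue that $S_t$ cannot cut through the thin neck $C_v$ in a way that prevents the pinched neck from being pushed to one side. More precisely, if $S_t$ meets the neck in an essential loop parallel to $\bd{D_v}$, then the pinching forces $T_t\cap S_t$ to be negligible on one side, contradicting balance. Otherwise, $D_v$ can be isotoped, via its core inside the nearly pinched neck, off $S_t$, which produces the required shadow.

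The same reasoning, applied to the disjoint necks $C_{v_i}$ simultaneously, handles the higher simplices.

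If this direct argument proves delicate, I would instead try to strengthen the conclusion of Proposition~\ref{prop:empty-contractible} near $\bd{B^d}$. The idea is to show that each $\Sigma_t$ there contains the star of $\psi_0(\pi(t))$, so that the needed inclusions follow by general position of the graphic near the boundary.
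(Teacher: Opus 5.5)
Your skeleton matches the paper's proof: assume $\Sigma_t\neq\emptyset$ on $\interior B^d$, pass to a sphere $\bd{B_r}$ near $\bd{B^d}$, extend over $B_r$ by Lemma~\ref{lem:extension}, and compare with $\psi_0$.

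\textbf{The gap is in the comparison step.} You want $\psi_0\circ\pi$ itself to be an admissible lift, i.e.\ $\psi_0(\pi(t))\in\Sigma_t$. Since $\Sigma_t$ is the \emph{full} subcomplex spanned by disks admitting shadows, this requires \emph{every} vertex $D_{v_i}$ of the cell carrying $\pi(t)$ to have a shadow missing $S_t$. With the sweepout of Section~\ref{sec:construction-sweepout} this fails. In the cube coordinates $t=\sum r_k v_k$, the outer part $\bd{B^d}\cap\sigma$ is the union of the faces $\{r_k=1\}$. Take a point near the interior of $\{r_1=1\}$ with $r_2=\cdots=r_d=1/2$. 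It lies over the interior of the outer cell $[v_1,\dots,v_d]$ of $\sSph_0$, yet only the neck $C_{v_1}$ is thin there. The other necks are half pinched, and nothing stops $\bd{D_{v_2}}$ from meeting an essential curve of $T_t\cap S_t$ essentially. For comparison, $a^1$ has no shadow with respect to $S$, because $\bd{b^1}\subset S\cap T$ meets $\bd{a^1}$ once. So your step ``the same reasoning, applied to the disjoint necks simultaneously'' has nothing to apply to. Your fallback, that the whole star of $\psi_0(\pi(t))$ lies in $\Sigma_t$, is stronger still.

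\textbf{Your thin-neck argument is also wrong as stated.} A meridian of the thin neck lying in $S_t$ does not contradict balance. A thin tube that rises above $S_t$ and returns is cut in two meridians while $T_t$ stays balanced. Balance is not needed at all. Slide the meridian along the core of the region where $\beta=1$ to a point whose height differs from $z(t)$ by a definite amount; this is possible since $f$ is generically nonconstant there. For $r$ near $1$ that tiny disk misses $S_t$, uniformly in $t$.

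\textbf{The missing idea: one admissible vertex per point, plus contiguity.} The paper assigns to each vertex $t$ of a fine triangulation of $\bd{B_r}$ a \emph{single} thin-neck disk $D_t$, so $D_t\in\Sigma_t$. Choosing the neck $C_{v_k}$ with $r_k\approx 1$ forces $v_k$ to be a vertex of the cell carrying $\pi(t)$. It then extends by Lemma~\ref{lem:extension} and says $\psi\simeq\psi_0$ ``by construction''. The reason is as follows.

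\begin{enumerate}
\item Take a simplex $\tau$ of $\bd{B_r}$ with $\pi(\tau)$ inside a cell $c$ of $\sSph_0$. The chosen vertex disks of $\tau$ are vertices of the simplex $\psi_0(c)$, hence pairwise disjoint.
\item By Proposition~\ref{prop:inclusions}(2) and fullness of $\Sigma_t$, the face they span lies in $\Sigma_t$ for $t\in\interior\tau$. So the \emph{linear} extension of the vertex map is admissible.
\item By the uniqueness part of Lemma~\ref{lem:extension}, this linear extension is homotopic to $\psi$.
\item It is straight-line homotopic to $\psi_0\circ\pi$ inside the simplices $\psi_0(c)$. This homotopy need not stay in $\Sigma_t$, and it does not have to.
\end{enumerate}

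This replaces your unprovable claim that $\psi_0\circ\pi$ is admissible.
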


\begin{proof}
    Let $r \in (0,1)$ sufficiently close to $1$. We denote by $B_r \subset B^d$ the $d$-ball of radius $r$. As before, we identify $B^d$ with the graph of the function $z:B^d \to [-1,1]$. Let $\sBall$ be a triangulation of $B^d$ such that $\sBall$ contains $B_r$ and the closure of $\grph \cap B^d$ as subcomplexes. 
    
    Define a map from the vertices of $\bd{B_r}$ to $\Gamma(T)$ by sending each vertex $t$ to a compressing disk $D_t$ that corresponds to a thin neck of $T_t$. Note that $D_t$ can be taken to be disjoint from $S_t$ provided that $r$ is close to $1$. Thus, $D_t \in \Sigma_t$ and hence the above map extends to a map $\psi:\bd{B_r} \to \Gamma(T)$ by Lemma~\ref{lem:extension}. By construction, $\psi$ is homotopic to $\psi_0$. 

    If $\Sigma_t$ is not empty for all $t \in \interior B^d$, $\psi$ extends to a map $B_r \to \Gamma(T)$ by Lemma~\ref{lem:extension}, a contradiction since $\psi \simeq \psi_0$ is homotopically nontrivial. 
\end{proof}

The first consequence of Lemma~\ref{lem:local_incompressibility} tells us that $T$ can be isotoped into a ``normal form" with respect to $S$, which can be compared with Theorem~3.2 of \cite{Bachman}. 

\begin{theorem}[store=normal_position]\label{thm:normal_position}
    Let $T$ be a genus $g$ Heegaard surface of $S^3$ and $S$ a $2$-sphere in $S^3$. Then, $T$ can be isotoped so that 
    \begin{itemize}
        \item $T$ intersects $S$ transversely away from $p$ saddle tangencies, where $2g \le p \le d+1$, and 
        \item $\Gamma_S(T)=\emptyset$.  
    \end{itemize} 
\end{theorem}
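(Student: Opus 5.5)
The plan is to take $T$ to be $T_t$ for the parameter $t\in\interior B^d$ supplied by Lemma~\ref{lem:local_incompressibility}, and $S$ to be the balancing sphere $S_{z(t)}$ of Proposition~\ref{prop:balancedsphere}. Since every $T_t$ is isotopic to $T$, and any two $2$-spheres in $S^3$ are ambient isotopic, it suffices to work with the pair $(T_t,S_{z(t)})$. Transporting by an ambient isotopy that carries $S_{z(t)}$ to the given $S$ then gives the statement for the original pair.

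First, the set of $t$ with $\Sigma_t=\emptyset$ is a union of open simplices of the triangulation $\sBall$ from Proposition~\ref{prop:inclusions}. This follows from Proposition~\ref{prop:inclusions}: $\Sigma_t$ is constant on open simplices, and it can only grow when we pass from a face into a higher-dimensional simplex. Hence we may choose $t$ in the interior of a simplex $\sigma$ that is minimal among those with $\Sigma=\emptyset$. We may also perturb $z$ generically so that $(t,z(t))$ lies in exactly $p$ sheets of $\redg$. By Proposition~\ref{prop:graphic}, $T_t$ then meets $S_t$ transversely away from exactly $p$ saddle tangencies; center tangencies are avoided by genericity, or removed as negligible intersections. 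The graph of $z$ is $d$-dimensional and the sheets meet transversely, so a generic point of it lies in at most $d+1$ sheets. This is the bound $p\le d+1$.

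For the lower bound $2g\le p$, we use Lemma~\ref{lem:empty->cells}. Since $\Gamma_{S}(T)=\emptyset$, after removing negligible intersections, $T\setminus S$ is a disjoint union of open $0$- and $2$-cells. Equivalently, every component of $T\setminus S$ is a disk, apart from the isolated points where a disk has been pinched to a center. The saddle tangencies persist through this isotopy, because the non-negligible intersections are left invariant. Now compute the Euler characteristic using the cell structure induced by $S\cap T$. The intersection $T\cap S$ is a graph in which each saddle tangency is a $4$-valent vertex and the remaining components are circles. Circle components contribute $0$ to $\chi$, and each saddle vertex contributes $1-2=-1$, so $\chi(T\cap S)=-p$. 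The complementary components are disks, and for those to exist each circle component must bound disks on both sides. Adding the disks (each contributing $+1$) gives
\[
2-2g=\chi(T)=-p+\#\{\text{disks}\}.
\]
So I need to show $\#\{\text{disks}\}\le 2$, or rather bound it suitably. A cleaner route is to observe that the balanced condition forces there to be no circle components: a circle component essential in $T$ would give a shadow by an innermost argument, and inessential ones were removed. Then count the disks directly. Each disk is a region above or below $S$ containing a single center, and a Morse count of $f|_T$ gives $\#\text{centers}-\#\text{saddles}=\chi(T)$. With exactly one maximum region above and one minimum region below, this yields $p=2g$ in the minimal case, and $p\ge 2g$ in general.

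The main obstacle is this Euler characteristic step: making rigorous that $\Gamma_S(T)=\emptyset$ forces each complementary region to contain one extremum, and relating the number of disks to $p$ so that $p\ge 2g$ follows. What I would try first is a Morse-theoretic count on $f|_{T'}$ for the surface $T'$ in the proof of Lemma~\ref{lem:empty->cells}. There the only critical points are the $p$ saddles and one center per component of $T'\setminus S$. This gives $\chi(T)=c-p$ with $c\ge 2$ the number of cells, hence $p=c+2g-2\ge 2g$.
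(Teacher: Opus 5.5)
Your final argument is essentially the paper's proof: take $t$ from Lemma~\ref{lem:local_incompressibility}, pass to the surface $T'$ of Lemma~\ref{lem:empty->cells}, and count critical points of $f|_{T'}$ to get $2-2g=c-p'\ge 2-p'$ using $c\ge 2$, with $p\le d+1$ coming from Proposition~\ref{prop:graphic}. Two small fixes are needed. First, saddles lying in negligible components of $T\cap S$ can disappear under that isotopy, so the count gives $p'\ge 2g$ together with $p'\le p$, as in the paper, rather than a statement about $p$ directly. Second, $p\le d+1$ holds at \emph{every} point because at most $d+1$ transverse codimension-one sheets can meet in $B^d\times[-1,1]$, so you should not perturb $z$: that could destroy balance or $\Sigma_t=\emptyset$, and in fact the point found lies on $d+1$ sheets, which a generic point of the $d$-dimensional graph of $z$ would not.
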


\begin{proof}
    By Lemma~\ref{lem:local_incompressibility}, we may isotope $T$ so that $\Gamma_S(T)=\emptyset$. Let $p$ be the number of saddle tangencies in $T \cap S$. By Lemma~\ref{lem:empty->cells}, $T$ is isotopic to a surface $T'$ such that $T' \setminus S$ consists of open $0$- and $2$-cells. Let $p'$ be the number of saddle tangencies in $T' \cap S$. Then,  
    \begin{eqnarray*}
        2-2g=\chi(T) \ge 2-p' \ge 2-p. 
    \end{eqnarray*}
    Therefore, $2g \le p$. The inequality $p \le d+1 $ follows from Proposition~\ref{prop:graphic}. 
\end{proof}

In particular, it follows from Theorem~\ref{thm:normal_position} that $2g-1 \le d$. Putting together this and Lemma~\ref{lem:well-defined_index} reproduces 
a result by Appel \cite{Appel} and Campisi and Torres \cite{Campisi-Torres}, as mentioned in Section~\ref{sec:Introduction}. 

\begin{theorem}[{\cite{Appel, Campisi-Torres}}]\label{thm:h-index}
  Let $T$ be a genus $g$ Heegaard surface of $S^3$. Then, $\Gamma(T)$ has homotopy index $2g-1$.  
\end{theorem}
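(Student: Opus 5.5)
The statement combines an upper and a lower bound, and I would obtain them separately. For the upper bound I invoke Lemma~\ref{lem:well-defined_index}: it produces a homotopically nontrivial sphere of dimension $2g-2$ in $\Gamma(T)$, so $\pi_{2g-2}(\Gamma(T)) \neq 1$ and the homotopy index is at most $2g-1$. The work is in the lower bound. I would let $d$ be the homotopy index itself. The homotopy index is the minimal $d$ with $\pi_{d-1}(\Gamma(T)) \neq 1$, so there is a homotopically nontrivial simplicial map $\psi_0:\sSph_0 \to \Gamma(T)$ from a triangulated $(d-1)$-sphere. Moreover $\Gamma(T)$ is nonempty, since $T$ has compressing disks. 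The aim is then to show $d \ge 2g-1$.

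With this $\psi_0$ I run the construction of Section~\ref{sec:construction-sweepout} to get a sweepout $\{T_t\}_{t \in B^d}$. After a small perturbation, $\{T_t\}$ and $\{S_s\}$ are in general position, so Proposition~\ref{prop:graphic} applies. Proposition~\ref{prop:balancedsphere} then gives the balancing function $z$, and Lemma~\ref{lem:local_incompressibility} provides a point $t \in \interior B^d$ with $\Sigma_t = \emptyset$. Equivalently, $\Gamma_{S_{z(t)}}(T_t) = \emptyset$, and $T_t$ is isotopic to $T$. This is exactly the input to Theorem~\ref{thm:normal_position}. From here I would either quote that theorem or redo its short Euler characteristic argument.

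For the count, Lemma~\ref{lem:empty->cells} lets me isotope $T_t$, without changing its essential intersections with $S = S_{z(t)}$, so that $T_t \setminus S$ is a union of open $0$- and $2$-cells. Let $p'$ be the number of saddle tangencies that remain. The $1$-skeleton lying in $S$ then consists of $p'$ vertices together with circles and arcs, and a cell count gives $2-2g = \chi(T) \ge 2 - p'$, so $p' \ge 2g$. The isotopy removes only negligible intersections, so $p'$ is at most the number $p$ of saddle tangencies of $T_t \cap S_{z(t)}$. On the other hand, by Proposition~\ref{prop:graphic} the point $(t,z(t))$ lies on the intersection of $p$ transverse $d$-dimensional sheets in the $(d+1)$-dimensional space $B^d \times [-1,1]$. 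General position therefore forces $p \le d+1$. Combining the two inequalities gives $2g \le d+1$, that is, $d \ge 2g-1$.

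The step I expect to need the most care is the Euler characteristic inequality $\chi(T) \ge 2-p'$. One has to check that after Lemma~\ref{lem:empty->cells} the part of $T$ lying in $S$ is a graph whose only vertices are the $p'$ saddle points, with every other point of $T \cap S$ a transverse circle. One also has to check that the $0$- and $2$-cell structure really contributes at most $2$ in total. Since $T$ is connected and has positive genus, I would argue that $\chi$ of the cell decomposition is the number of $2$-cells minus the $p'$ saddle-induced $1$-cell excess, and that the number of disk components is at most $2$ plus the number of saddles.
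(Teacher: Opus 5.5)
Your proposal is correct and follows the paper's own argument. The upper bound is Lemma~\ref{lem:well-defined\_index}; the lower bound runs the sweepout for a nontrivial $(d-1)$-sphere, uses Lemma~\ref{lem:local\_incompressibility} to find $t$ with $\Sigma_t=\emptyset$, and then applies the count $2g \le p' \le p \le d+1$ from Theorem~\ref{thm:normal\_position}.

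One small fix to your last paragraph. Since $T\cap S$ is a graph with $p'$ four-valent vertices and $2p'$ edges (there are no vertex-free circles, as $g \ge 1$), one has $\chi(T)=F-p'$, where $F$ is the number of disk components. So the needed inequality $\chi(T)\ge 2-p'$ is just $F\ge 2$, which holds because there is at least one disk on each side of $S$. The bound $F\le 2+p'$ you propose is true but points the wrong way.
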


\section{Sliding a spine in a handlebody}\label{sec:sliding}

We now return to the proof of Theorem~\ref{thm:Powell}. Let $\{T_t\}$ and $\{S_t=S_{z(t)}\}$ be as before. In this section, we study an isotopy of the form $T_{l(\theta)}$, where $l:[0,1] \to B^d$ is a radial segment of $B^d$. The goal of this section is Theorem~\ref{thm:bouquet}. 

We say a properly embedded compact surface $P$ in a handlebody $A$ is a \emph{parallelism surface} if there exists a diffeomorphism from $(A,P)$ to $(P \times [-1,1],P \times \{0\})$. We start with the following well-known fact that is a consequence of the path connectedness of the arc complex $\mathcal{A}(P)$. 

\begin{proposition}\label{prop:arc-spine}
    Suppose that $P$ is a parallelism surface for a handlebody $A$. Then, any pair of spines $J,J' \subset P$ are related to each other by edge slides and isotopies inside $P$. 
\end{proposition}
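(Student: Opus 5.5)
Since $A \cong P \times [-1,1]$ with $P = P\times\{0\}$, a spine $J \subset P$ of $A$ is a spine of $P$: a graph onto which $P$ deformation retracts (up to the usual identification of $A\setminus J$ with $\bd{A}\times(0,1]$). The plan is to translate spines of $P$ into dual arc systems and then use the path connectedness of the arc complex $\mathcal{A}(P)$. Here $\mathcal{A}(P)$ is the complex of properly embedded essential arcs in $P$, with simplices given by disjoint systems. Concretely, to a spine $J \subset P$, which after an isotopy in $P$ we may take to be trivalent or a bouquet, I associate a \emph{dual} arc system $\alpha(J)$. This is a maximal collection of pairwise disjoint, non-parallel essential arcs in $P$, each meeting $J$ transversely in one point of one edge, chosen so that cutting $P$ along $\alpha(J)$ yields a disk. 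Equivalently, $J$ is recovered up to isotopy in $P$ as the dual graph of a filling arc system that cuts $P$ into a single disk (or into disks, each containing one vertex). Conversely, every arc system cutting $P$ into a disk determines a spine in $P$, unique up to isotopy in $P$.

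The key steps, in order, are as follows.
\begin{enumerate}
\item Reduce to comparing cut systems. A minimal cut system $\{\alpha_1,\ldots,\alpha_n\}$ of $P$ (with $P\setminus\bigcup\alpha_i$ a disk) determines the spine $J$ up to isotopy and edge slides within $P$. The reason is that different choices of vertex and edge arrangement inside the complementary disk differ by slides across that disk.
\item Use the connectivity of $\mathcal{A}(P)$, or more precisely of the complex of cut systems, in which two cut systems are adjacent when they differ in one arc and the new arc is disjoint from the old ones. The fact that any two minimal cut systems are related by a sequence of single-arc replacements follows from the path connectedness of $\mathcal{A}(P)$ by the standard surgery argument. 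One reduces $|\alpha \cap \alpha'|$ by taking an outermost subarc of $\alpha'$ cut off by $\alpha$ and performing a band-sum/surgery replacement, choosing at each stage the surgered arc so that the system still cuts $P$ into a disk. This parallels Hatcher's connectivity argument for cut systems.
\item Show that a single arc replacement $\alpha_i \mapsto \alpha_i'$ with $\alpha_i' \cap \alpha_j = \emptyset$ for $j\neq i$ corresponds to an edge slide of the dual spine. Cutting along the $\alpha_j$, $j\ne i$, leaves an annulus or a disk with two arcs identified, and $\alpha_i'$ is obtained from $\alpha_i$ by sliding an endpoint around a boundary component. Dually, the edge crossing $\alpha_i$ has one end slid along a path in $J$.
\end{enumerate}
Throughout, all isotopies and slides stay inside $P$, because every construction happens on the surface $P$.

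I expect step (2) to be the main obstacle. One must ensure that the surgery step preserves the property that the system is a cut system (cuts $P$ into a disk) and that the number of intersections strictly decreases. The approach I would try first is the following. Given the outermost bigon-like subarc $\beta \subset \alpha'$ with endpoints on some $\alpha_i$, replace $\alpha_i$ by one of the two arcs obtained from $\alpha_i \cup \beta$. At least one of them keeps the complement connected, because the complement of the other arcs is a disk with $\alpha_i$ separating it into two sides, and $\beta$ lies on one side. The chosen arc is disjoint from the remaining arcs and meets $\alpha'$ fewer times, so induction on $|\alpha\cap\alpha'|$ finishes the argument.
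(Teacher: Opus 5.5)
Your route differs from the paper's. The paper matches trivalent spines with maximal arc systems (hexagon decompositions of $P$), so that replacing one arc is exactly one Whitehead move, i.e.\ one edge slide. It then quotes connectivity of the graph of maximal systems, which is what its appeal to the ``dual'' of $\mathcal{A}(P)$ amounts to. You instead normalize to roses dual to minimal cut systems and prove connectivity of the cut-system graph directly by outermost-arc surgery. That strategy is viable and more self-contained; your surgery argument does not actually use connectivity of $\mathcal{A}(P)$, despite what you say. The price is a tree contraction to reach a rose and a less clean dictionary between arc moves and slides. However, two steps are wrong as written, including the one you flag as the main obstacle.

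\textbf{Step (2).} Removing the arcs $\alpha_k$, $k\neq i$, from $P$ does not leave a disk.
\begin{itemize}
\item The result is connected, since cutting further along $\alpha_i$ gives a disk, and it has Euler characteristic $\chi(P)+(n-1)=0$. So it is an annulus $R$, and $\alpha_i$ is a spanning arc of $R$ that does not separate it. Hence ``$\beta$ lies on one side'' gives no criterion.
\item $\beta$ must be an outermost subarc of some $\alpha'_j$, running from an endpoint $q\in\partial P$ to a first intersection point $x\in\alpha_i$. A subarc with both ends on $\alpha_i$ surgers $\alpha_i$ into one arc and a closed curve.
\item The correct rule: $q$ lies on one component of $\partial R$, and you must join $\beta$ to the half of $\alpha_i$ running from $x$ to the \emph{other} component. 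That arc spans $R$, so the new system again cuts $P$ into a disk and meets $\alpha'$ fewer times. The other choice has both ends on one component of $\partial R$, separates $R$, and always fails.
\item You also need the base case $\alpha\cap\alpha'=\emptyset$. Take $\alpha'_j\notin\alpha$. Together with the arcs already common to $\alpha$ and $\alpha'$, it does not separate $P$. So some arc of $\alpha$ not in $\alpha'$ has its two copies on opposite sides of $\alpha'_j$ in the disk $P\setminus\alpha$, and $\alpha'_j$ can replace it.
\end{itemize}

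\textbf{Step (3).} Here the duality is backwards.
\begin{itemize}
\item In $R$, the edge $e_i$ dual to $\alpha_i$ is the core circle, and it is also dual to $\alpha'_i$, so it does not move. What changes is where the ends of the other edges $e_j$ attach to it.
\item Each time an endpoint of $\alpha_i$ is pushed along $\partial R$ past a copy of some $\alpha_j$, one end of $e_j$ slides once around $e_i$.
\item Example: take a pair of pants with boundary circles $c_1,c_2,c_3$. Keep $\alpha_2$ (from $c_2$ to $c_3$) and replace $\alpha_1$ (from $c_1$ to $c_3$) by an arc from $c_1$ to $c_2$. The dual rose changes from the loops around $c_1,c_2$ to the loops around $c_1,c_3$, so the loop dual to the replaced arc is unchanged.
\item So one replacement is a finite sequence of slides of the $e_j$ along $e_i$, not a single slide of $e_i$. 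This still gives what you need.
\end{itemize}

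\textbf{Setup of step (1).} A \emph{maximal} arc system cuts $P$ into hexagons, not into a disk. What you want is the cut system dual to the edges of $J$ outside a maximal tree. Reaching its dual rose requires contracting that tree. This is a move of the same kind as the paper's ``slid slightly to be trivalent,'' but you should state it explicitly.
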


\begin{proof}
    Since any spine can be slid slightly to be trivalent, we may assume that both $J$ and $J'$ are trivalent. Consider a simplicial complex $\mathcal{K}$ defined as follows. Every vertex of $\mathcal{K}$ is represented by a trivalent spine $J$ lying in $P$, and two such spines determine the same vertex if they are isotopic inside $P$. A $k$-simplex is represented by a collection of spines $\{J_i\}^k_{i=0}$ such that $J_i$ is related to $J_j$ by an edge slide inside $P$. It suffices to show that $\mathcal{K}$ is path connected. 
    
    Let $\{\alpha_i\}$ be an arc system that represents a simplex in $\mathcal{A}(P)$ of maximal dimension. Then, $\{\alpha_i\}$ cuts $P$ into a collection of hexagons and induces a cell decomposition of $P$. Observe that the $1$-skeleton of the dual of the decomposition is a spine of $A$. Conversely, any trivalent spine in $P$ is obtained in this way. Furthermore, we can see that two spines lying in $P$ are related by an edge slide inside $P$ if the corresponding simplexes in $\mathcal{A}(P)$ share a face. Since $\mathcal{A}(P)$ and its dual are path connected (in fact, contractible by \cite{Harer83}), so is $\mathcal{K}$. This proves the proposition. 
\end{proof}

We denote by $A_t$ the handlebody with $\bd{A_t}=T_t$ that lies in the same side as $A$. Let us consider a situation where $A_t \cap S_t$ is a parallelism surface in $A_t$ up to negligible intersections. More precisely, 

\begin{definition}
    We say $T_t$ is \emph{nearly parallel} to $S_t$ if the following hold. There exists an isotopy of $T_t$ that removes all the negligible intersections with $S_t$ while the other intersections are left invariant. After this isotopy, $P_t=A_t \cap S_t$ is a parallelism surface in $A_t$. 
\end{definition}

Note that by Lemma~\ref{lem:local_incompressibility} and the next proposition, such a situation actually occurs at some point in $B^d$. 

\begin{proposition}\label{prop:resolution}
    Suppose that $t$ is a point in $B^d$ and $\Sigma_t =\emptyset$. Then, there exists a point $\lft{t}$ near $t$ such that $T_{\lft{t}}$ is nearly parallel to $S_{\lft{t}}$. 
\end{proposition}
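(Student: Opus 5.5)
The plan is to combine Lemma~\ref{lem:empty->cells} with a careful choice of how to resolve the saddle tangencies of $T_t \cap S_t$ by moving off the strata $\mathcal{R}_p$ of $\redg$. Since $\Sigma_t=\emptyset$, Lemma~\ref{lem:empty->cells} gives an isotopy removing all negligible intersections of $T_t$ with $S_t$ while fixing the others. After it, every component of $T_t \setminus S_t$ is an open disk containing exactly one center singularity of $f|_{T_t}$. Suppose $t$ lies in $\mathcal{R}_p$, so $T_t \cap S_t$ has $p$ saddle tangencies. By Proposition~\ref{prop:graphic}, the $p$ sheets of $\redg$ through $(t,z(t))$ meet transversely. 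Hence near $t$ there are points $\lft{t} \in \mathcal{R}_0$ realizing every one of the $2^p$ combinations of resolutions, each saddle being pushed slightly above or slightly below $S$. I will use this freedom to choose $\lft{t}$.

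The key steps, in order, are as follows.

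\begin{enumerate}
\item Describe $T_t \cap S_t$ after the isotopy. It is a graph $\Theta \subset S_t$ whose vertices are the $p$ saddles (each of valence $4$) and whose complementary regions in $T_t$ are disks, each lying above or below $S_t$. Each disk above (resp. below) is a \emph{cap}: it has a single maximum (resp. minimum). By irreducibility of $S^3$ and an innermost-disk argument, each cap cobounds a $3$-ball with a region of $S_t$.
\item Pass to $\lft{t}$ by resolving every saddle \emph{toward $A_t$}. That is, at each saddle, choose the perturbation that joins the two local sheets of $A_t \cap S_t$ meeting there, rather than separating them. After this, $T_{\lft{t}}$ meets $S_{\lft{t}}$ transversely. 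The surface $P_{\lft{t}} = A_{\lft{t}} \cap S_{\lft{t}}$ is a regular neighbourhood in $S_{\lft{t}}$ of the closed $A$-side regions of $S_t \setminus \Theta$, glued along the resolved saddles.
\item Show that $A_{\lft{t}}$ cut along $P_{\lft{t}}$ is a collar on each side. On the upper side, $A_{\lft{t}} \cap X^+$ is the union of the balls cobounded by the upper caps of $A$ and regions of $S$. Each such ball has the form $D \times [0,1]$. Because all saddles were joined on the $A$-side, these balls glue along $P_{\lft{t}}$ to give $P_{\lft{t}} \times [0,1]$, and symmetrically below. This yields the diffeomorphism $(A_{\lft{t}},P_{\lft{t}}) \cong (P \times [-1,1], P \times \{0\})$.
\item Use Euler characteristic as a consistency check. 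By the computation in the proof of Theorem~\ref{thm:normal_position}, $\chi(T) = 2g'$-type counts force $P_{\lft{t}}$ to be a connected planar surface with $\chi(P_{\lft{t}}) = 1-g$. This is exactly what a parallelism surface for a genus $g$ handlebody must be.
\end{enumerate}

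The main obstacle is step (3). One must verify that the $A$-side resolution really makes each complementary piece of $A_{\lft{t}} \setminus P_{\lft{t}}$ a product, rather than something like a ball attached to $P$ along a disk that is not a full component of $P$. Equivalently, one must show that every component of $P_{\lft{t}}$ is capped off on each side by exactly one disk of $T_{\lft{t}}$, so the upper and lower pieces are both homeomorphic to $P_{\lft{t}} \times I$.

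I would first try to prove this by counting. Each upper cap contributes one maximum, each lower cap one minimum, and each saddle one index-$1$ point. Morse theory on $f|_{T_{\lft{t}}}$, together with $\chi(T) = 2-2g$ and $p \ge 2g$, should force the number of caps on each side to equal the number of components of $P_{\lft{t}}$, with each component matched to exactly one cap. If this counting is not sharp enough, the fallback is to use that $\Gamma_{S_t}(T_t)=\emptyset$ forbids any essential loop of $T$ from bounding a disk off $S$. That should rule out a cap meeting $P$ in only part of a component.
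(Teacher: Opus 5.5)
Your overall route is the paper's: apply Lemma~\ref{lem:empty->cells}, resolve every saddle so as to enlarge $A\cap S$, and conclude that $A\cap S$ is a parallelism surface. The paper asserts that last point in one sentence. But the step you yourself flag as the main obstacle is not proved, and neither route you propose for it can work. Both use only $\chi(T)=2-2g$, the lower bound $p\ge 2g$, and $\Gamma_S(T)=\emptyset$, and these are not enough.

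Here is a configuration satisfying all of them where your conclusion fails. Take $g=1$ and $B=Q\times[-1,1]$ with $Q\subset S$ an annulus. Shape it so that $f$ has two maxima and two saddles on the top of $B$, and one minimum and one saddle on the bottom. Push the three saddles onto $S$. Then:
\begin{itemize}
\item $T\setminus S$ is three open disks: two caps, both with $B$ underneath, and one cup. So $\Gamma_S(T)=\emptyset$ and $T$ is balanced.
\item $A\cap S$ has two faces, and every saddle joins them.
\item Resolving all three saddles toward $A$ adds three bands, giving a pair of pants. That is not a parallelism surface for the solid torus $A$.
\end{itemize}
So the hoped-for matching of caps with components of $P$ fails in general. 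Likewise, step~(4) does not follow from the computation in Theorem~\ref{thm:normal_position}, which only gives $p'\ge 2g$. (Note also that the dimension count does not exclude $2g+1$ saddles in the one-parameter family of Section~\ref{sec:parallelization}, where the proposition is applied.)

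The missing ingredient is the upper bound. Proposition~\ref{prop:graphic} and $d\le 2g-1$ give $2g\le p'\le p\le d+1\le 2g$, so $p'=2g$. The face count $2-2g+p'=2$ then shows $T'\setminus S$ is exactly one cap and one cup. With that, a handle argument finishes the proof:
\begin{itemize}
\item After resolving toward $A$, every saddle lying in $X^+$ has $A$ locally below it. So sweeping $A''\cap X^+$ upward from the collar $P\times[0,\eta]$ changes nothing at those saddles.
\item The only possible events are therefore the maximum of the cap and the pole of $f$.
\item If $A$ lies under the cap, neither contributes.
\item If $B$ lies under the cap, then $A\cap X^+$ contains the pole. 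This does happen, for example for $B=Q\times[-1,1]$ with one saddle on top and one on the bottom pushed onto $S$. In that case your step~(3) description of $A\cap X^+$ as a union of balls under caps is wrong. Instead, a $2$-handle is attached along $\partial(A''\cap S_c)$ for $c$ just below the maximum, and it is cancelled by the $3$-handle at the pole.
\end{itemize}
Either way $(A''\cap X^{\pm},P)\cong(P\times I,P\times 0)$, by the symmetric argument for $X^-$. The value $\chi(P)=1-g$ then follows automatically.

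Separately, your claim that all $2^p$ resolution patterns occur at points $\tilde t$ near $t$ is unjustified when $p=d+1$, which is the only case here. The graph of $z$ passes through an isolated vertex of $\redg$ and generically misses two antipodal orthants. The paper is silent on this point too.
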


\begin{proof}
    By Lemma~\ref{lem:empty->cells}, $T_t$ is isotopic to a surface $T'_t$ such that $T'_t \setminus S_t$ is a collection of open $0$- and $2$-cells. Resolving saddle tangencies to increase $A'_t \cap S_t$ gives rise to a collection of surfaces that can be isotoped simultaneously into $S_t$. This means that $T'_t$ is nearly parallel to $S_t$. On the other hand, if we resolve the saddle tangencies in $T_t \cap S_t$ in the same manner to obtain a surface $T_{\lft{t}}$, then the intersection pattern between $T_{\lft{t}}$ and $S_{\lft{t}}$ is identical to that between $T'_t$ and $S_t$, up to negligible intersections. This shows that $T_{\lft{t}}$ is nearly parallel to $S_{\lft{t}}$. 
\end{proof}

Let $K \subset S$ as in Section~\ref{sec:outline}. The main result of this section is 

\begin{theorem}\label{thm:bouquet}
    Suppose that $l:[0,1] \to B^d$ is a segment with $l(0)=0$ and $l(1)=t_1$ such that $T_{t_1}$ is nearly parallel to $S_{t_1}$. Then, there exists a handlebody $V \subset S^3$ such that for each $\theta \in [0,1]$, $A_{l(\theta)}$ is contained in $V$ as a deformation retract. Moreover, for any spine $K'$ of $A_{l(1)}$ lying in $S_{t_1}$, there exist a family of spines $K_\theta \subset V$ such that 
    \begin{itemize}
        \item $K_0=K$ and $K_1=K'$, and 
        \item $K_\theta$ arises from a sequence of $S$- and $S_{t_1}$-slides as well as isotopies.  
    \end{itemize}
\end{theorem}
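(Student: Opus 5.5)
The plan is to exploit the explicit construction of the sweepout along the radial segment. For $t=l(\theta)$ with $t$ in the interior of a cone simplex spanned by $0=v_0,v_1,\ldots,v_d$, the surface $T_t$ is obtained from $T$ by the maps $\eta^{r_k}_{v_k}$. Each of these pinches the neck of $T$ inside the product neighborhood $C_{v_k}$ of a compressing disk $D_{v_k}$. The disks $D_{v_k}$ with $v_k$ in a common simplex are disjoint. Along a radial segment the vertices involved are fixed: $l$ passes through the interior of a single cone simplex, possibly after a small perturbation. So $T_{l(\theta)}$ is just $T$ with the necks at finitely many pairwise disjoint disks $D_{v_1},\ldots,D_{v_k}$ progressively thinned, monotonically in $\theta$.

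Those disks lying on the $A$ side only shrink $A$. Those lying on the $B$ side thicken $A$, by filling in a neighbourhood of a disk of $B$. We therefore set $V:=A\cup\bigcup_{D_{v_i}\subset B}C_{v_i}$, after slightly shrinking the $C_v$ so that $V$ is a regular neighbourhood of $A\cup\bigcup D_{v_i}$. Adding $2$-handles to a handlebody along disjoint essential disks of the complement gives a handlebody only when the result is handlebody-like, so we need to check that $V$ is a handlebody. I would argue this via $S^3$: the complement $B\setminus\bigcup C_{v_i}$ is a union of handlebodies, because cutting a handlebody along disks leaves handlebodies. Hence $V$ is the complement of handlebodies in $S^3$, and by Waldhausen (or Fox reembedding) it is a handlebody whenever it is connected with handlebody complement. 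Each $A_{l(\theta)}$ is then $A$ minus parts of the $A$-side necks union parts of the $B$-side necks. Every such piece is a collar or a partially filled $1$-handle, so $A_{l(\theta)}\subset V$ and $A_{l(\theta)}$ is a deformation retract of $V$. The parallel pieces deform radially onto the core disks, which can be pushed into $V$ without changing the homotopy type. Here I would use the fact that the pinching $\rho_r$ is a radial shrink.

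For the spines, the plan is to compare $K\subset S$ and $K'\subset S_{t_1}$ inside $V$. Since $A_{l(\theta)}\subset V$ are all isotopic through handlebodies inside $V$, with $V$ retracting onto each, any spine of $A_{l(\theta)}$ is a spine of $V$. Spines of a handlebody are related by edge slides and isotopies, which is the standard Scharlemann--Thompson-type fact. The content of the theorem is that the slides can be taken to be $S$-slides and $S_{t_1}$-slides. For this I would use Proposition~\ref{prop:arc-spine} twice. First, near $\theta=0$, $T$ is the boundary of a neighbourhood of $K$ and $A\cap S$ is a parallelism surface for $A$. Any spine of $A$ in $S$ can therefore be reached from $K$ by $S$-slides. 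Second, at $\theta=1$, $P_{t_1}=A_{t_1}\cap S_{t_1}$ is a parallelism surface by near parallelism. Hence $K'$ is related to any other spine in $P_{t_1}$ by $S_{t_1}$-slides. What remains is to connect a spine in $S$ to a spine in $S_{t_1}$ through isotopies carried along $l$. I would do this by transporting a spine lying in $S$ along the ambient isotopy covering $T_{l(\theta)}$. The support of that isotopy is the necks, so the spine can be chosen to avoid the $A$-side necks whose disks $D_{v}$ are cut. At the times when a $B$-side neck closes, we add an edge dual to the new $1$-handle. This requires the corresponding disk $D_v$ to be reflected in the position of the spine relative to $S$.

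I expect this transition to be the main obstacle. Concretely, we must show that at each moment a neck is fully pinched, which happens when $\theta$ crosses the points where $r_k$ reaches $1$ on the cone, the spine can be modified by a single slide within the current level sphere. One could try to avoid this by interpolating $S_s$ between $S$ and $S_{t_1}$ along $z(l(\theta))$ and invoking Proposition~\ref{prop:inclusions}. I would first try the following: perturb $l$ so that it meets $\redg$ transversally, and handle each crossing of a saddle sheet by one $S_{z}$-slide, using Proposition~\ref{prop:arc-spine} in the local parallelism surface on either side. I am unsure whether intermediate levels keep near-parallelism; if not, I would instead homotope within $V$ directly.
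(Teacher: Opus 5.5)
Your proposal has a genuine gap in both halves of the argument.

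\textbf{The handlebody $V$.} The $V$ you build is wrong. A regular neighbourhood of $A\cup\bigcup_{D_{v_i}\subset B}D_{v_i}$ is $A$ with $2$-handles attached along the curves $\bd{D_{v_i}}$. So $\pi_1(V)=\pi_1(A)/\langle\langle \bd{D_{v_i}}\rangle\rangle$, and the genus $g$ handlebody $A_{l(\theta)}$ cannot be a deformation retract of it. Your claim that the filled-in pieces can be pushed in ``without changing the homotopy type'' is false. Your argument that $V$ is a handlebody also fails. A connected submanifold of $S^3$ with handlebody complement need not be a handlebody (a trefoil exterior is a counterexample), and Waldhausen's theorem applies only once both sides are known to be handlebodies. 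Moreover, for $t_1\in\interior B^d$ no neck is ever fully pinched along $l$, so there are no ``closing'' events at which to add edges. The paper instead takes $V:=A_x$ with $x=\sum_{i\le m}\alpha_i(1)v_i$. That is, the $B$-side necks are thinned only as much as at $t_1$, and the $A$-side necks not at all. Then $V$ is isotopic to $A$, and monotonicity of the $\alpha_i$ along the radial segment gives $A_{l(\theta)}\subset V$, the difference being product regions along the neck annuli.

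\textbf{Connecting $K$ to $K'$.} This is the more serious problem. The step you flag as the main obstacle is the whole content of the theorem, and neither of your suggestions supplies it. Along $l$ the handlebody $A_{l(\theta)}$ simultaneously grows (at the $B$-side necks) and shrinks (at the $A$-side necks). At intermediate times $A_{l(\theta)}\cap S_{z}$ need not be a parallelism surface, so Proposition~\ref{prop:arc-spine} cannot be invoked at saddle crossings. ``Homotoping within $V$'' gives no control forcing intermediate spines to lie in a level sphere, which is exactly what an $S$- or $S_{t_1}$-slide requires.

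The missing idea is to route through the corner $x$. Along $\ell\colon 0\to x$ and $\ell'\colon t_1\to x$ the handlebodies increase monotonically to $V$, so the planar surfaces $V_r\cap S$ (resp.\ $V'_r\cap S_{t_1}$) only gain bands at saddle tangencies. One then tracks a \emph{standard peripheral} spine:
\begin{enumerate}
\item A band lies in a single complementary disk of the planar surface, so at most one loop stops being boundary parallel.
\item By Lemma~\ref{lem:b-parallel}, the other loops can be capped off in the exterior, leaving an unknotted solid torus.
\item In that solid torus the bad loop is returned to a peripheral position by $S$-slides. This is Lemma~\ref{lem:sliding}, and Lemma~\ref{lem:sliding2} after first replacing $K'$ by a standard peripheral spine of $Q_0$ via Proposition~\ref{prop:arc-spine}.
\end{enumerate}
The two resulting peripheral spines $J\subset S$ and $J'\subset S_{t_1}$ of $V$ are finally compared through the disk systems their loops bound in $S^3\setminus\interior V$. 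Nothing corresponding to the monotone paths through $x$ or to this peripheral-spine bookkeeping appears in your plan.
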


We define a handlebody $V$ as follows. Let $l:[0,1] \to B^d$ be as in Theorem~\ref{thm:bouquet} and let $\sigma$ be the smallest simplex of $\sBall_0$ that contains $l$. As before, view $\sigma$ as a $k$-cube in Euclidean space. Then, we can write $l$ in the form $l(\theta)=\sum_{i=1}^{k} \alpha_i(\theta)v_i$, each $v_i$ corresponding to a vertex of $\sigma$. Without loss of generality, we may assume that the first $m$ vertices are represented by disks in $B$ while the latter $k-m$ vertices are represented by disks in $A$. Define $V$ to be the handlebody corresponding to the point $x=\sum_{i=1}^m \alpha_i(1)v_i$ in $\sigma$ (Figure~\ref{fig:cube}). As $D_{v_1},\ldots,D_{v_m}$ lie in $B$, we have 
\begin{eqnarray}
    A_{l(\theta)} \subset V 
\end{eqnarray}
for all $\theta \in [0,1]$. This proves the first part of Theorem~\ref{thm:bouquet}. 

\begin{figure}
    \begin{overpic}[scale=.5]{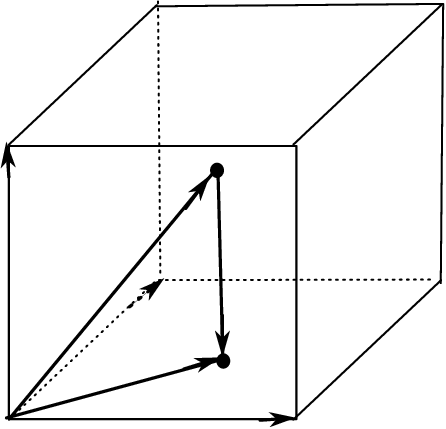}
        \put(-5,-5){$0$}
        \put(57,60){$t_1$}
        \put(50,7){$x$}
        \put(30,42){$l$}
        \put(25,12){$\ell$}
        \put(57,35){$\ell'$}
        \put(70,-3){$v_1$}
        \put(40,38){$v_2$}
        \put(-5,75){$v_3$}
    \end{overpic}
    \caption{Segments in $\sigma$.}
    \label{fig:cube}
\end{figure}

On the other hand, the second part of Theorem~\ref{thm:bouquet} is a consequence of Lemmas~\ref{lem:sliding} and \ref{lem:sliding2} below. To state these results, we need a few definitions. 

\begin{definition}
We say a spine $J$ is \emph{standard} if $J$ is a bouquet formed by loops $e^1,e^2,\ldots,e^g$ that bound disks $\Delta^1,\Delta^2,\ldots,\Delta^g$, respectively, in some level sphere such that $\interior \Delta^i \cap \interior \Delta^j=\emptyset$ for $i \neq j$.      
\end{definition}

\begin{definition}
    Suppose that $P$ is a subsurface of some level sphere. A standard spine $J \subset P$ is called \emph{peripheral} in $P$ if every loop of $J$ is boundary parallel in $P$. 
\end{definition}

A primary feature of a peripheral spine can be summarized as follows: 

\begin{lemma}\label{lem:b-parallel}
    Suppose that $V_{\bullet} \subset S^3$ is a genus $g$ handlebody that intersects a level sphere $S$ in a subsurface $P \subset S$. Suppose that $J \subset P$ is a standard peripheral spine of $V_{\bullet}$ formed by loops $e^1,e^2,\ldots,e^g$ that are parallel to circles $c^1,c^2,\ldots,c^g \subset \bd{P}$, respectively. Then, the circles $c^j$ bound pairwise disjoint disks in the complement of $\interior V_{\bullet}$. 
\end{lemma}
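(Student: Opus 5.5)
The plan is to use the product structure of a regular neighborhood of $P$ together with the irreducibility of $S^3$. Since $J \subset P$ is a spine of $V_\bullet$ and $J$ lies in $P = V_\bullet \cap S$, the handlebody $V_\bullet$ deformation retracts onto $J$. It is therefore isotopic, rel $J$, to a regular neighborhood $N(J)$ of $J$. After shrinking along this product structure, we may assume $V_\bullet$ is a thin neighborhood of $P$, or at least of the sub-neighborhood of $J$ in $P$. The first step is thus to replace $V_\bullet$ by $N(J)$. This is harmless because the complement of $\interior V_\bullet$ deformation retracts to a set containing the complement of $\interior N(J)$ modulo a collar, and disks found there can be pushed back out.

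The second step is to build the disks for the standard peripheral spine. Each loop $e^j$ bounds a disk $\Delta^j \subset S$, and these disks have pairwise disjoint interiors. Because $e^j$ is parallel in $P$ to $c^j \subset \bd{P}$, there is an annulus $R^j \subset P$ with $\bd{R^j} = e^j \cup c^j$. The circle $c^j$ bounds in $S$ the disk $E^j$, which is either $\Delta^j \setminus R^j$ or $\Delta^j \cup R^j$. The correct choice is the side of $c^j$ that does not contain $P$ near $c^j$. Since $c^j$ is a boundary component of $P$, one side of $c^j$ in $S$ locally leaves $P$.

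I then take $E^j$ to be the component of $S \setminus c^j$ that is disjoint from $\interior P$ near $c^j$, and show that it is a disk missing $P$ entirely except along $c^j$. This follows from peripherality: the region between $e^j$ and $c^j$ is exactly $R^j$, and the interiors of the $\Delta^i$ are disjoint, so no other part of $J$, and hence, after thinning, no other part of $P$, can enter $E^j$. Such a $P$-piece would have to contain loops of $J$ or be disjoint from $J$. If it were disjoint from $J$, it could be removed, since $P$ retracts into a neighborhood of $J$ after the thinning in the first step.

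Finally, I would push each $E^j$ slightly off $S$ to the side away from $V_\bullet$. This makes the disks lie in $S^3 \setminus \interior V_\bullet$ with boundaries $c^j \subset \bd{V_\bullet}$. Disjointness follows because the $E^j$ are the complementary disks of the nested-free family $\{\Delta^j\}$: distinct $c^j$ bound disks on the outside of $\bigcup \Delta^i \cup P$, and these disks can only be nested if some $\Delta^i$ contained another, which is excluded.

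The main obstacle is the step showing that $E^j \cap V_\bullet = c^j$, that is, that no other part of $V_\bullet \cap S$ sits inside $E^j$. Here I would use peripherality together with the thinning of $V_\bullet$ to a neighborhood of $J$ along its spine structure.
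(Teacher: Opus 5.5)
Your proposal follows the paper's argument. You take the disks in $S$ bounded by the $c^j$ on the side away from $J$; these are pairwise disjoint because each lies inside the corresponding disk of the standard spine. Anything else of $V_\bullet\cap S$ that they meet lies in the collar $V_\bullet\setminus J\cong\partial V_\bullet\times(0,1]$, and you use that collar to get them off $\interior V_\bullet$.

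Two corrections to how you phrase it:
\begin{itemize}
\item $E^j\cap P=c^j$ does not follow from peripherality. Components of $P$ that miss $J$ can lie inside $E^j$, so these pieces really do have to be removed.
\item Replacing $V_\bullet$ by $N(J)$ is only harmless if you carry the curves $c^j$ through the collar $V_\bullet\setminus\interior N(J)$. The annuli $R^j\setminus\interior N(J)$ are what identify $c^j$ with the corresponding boundary curves of $N(J)\cap S$. This is the same collar push-off the paper invokes.
\end{itemize}
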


\begin{proof}
    For each $j$, choose a disk $\Delta^j \subset S$ such that $\bd{\Delta^j}=c^j$ and $\Delta^j \cap \Delta^i=\emptyset$ for $j \neq i$. If $\Delta^j \cap \interior V_{\bullet}=\emptyset$ for all $j$, there is nothing to prove. If this is not the case, we can isotope $\Delta^j$ to be disjoint from $\interior V_{\bullet}$ since $\interior \Delta^j \cap V_{\bullet}$ lies in $V_{\bullet} \setminus J \cong \bd{V_{\bullet}} \times (0,1]$. 
\end{proof}

The first step of the proof of Theorem~\ref{thm:bouquet} is 

\begin{lemma}\label{lem:sliding}
    Put $P:=V \cap S$. Then, there exists a standard peripheral spine $J \subset P$ such that $J$ is related to $K$ by a sequence of $S$-slides and isotopies inside $V$. 
\end{lemma}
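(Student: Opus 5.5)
We will prove Lemma~\ref{lem:sliding} by describing $V$ explicitly and sliding the loops of $K$ one at a time. By construction, $V$ is the handlebody attached to the point $x=\sum_{i=1}^m \alpha_i(1)v_i$. So $\bd{V}=T_x$ is obtained from $T$ by pinching (partially, or fully where $\alpha_i(1)=1$) the necks $C_{v_1},\ldots,C_{v_m}$ along the disks $D_{v_1},\ldots,D_{v_m}\subset B$, which are pairwise disjoint. Pinching along disks in $B$ only thickens $A$. Hence $V$ is $A$ together with $2$-handles (thickened copies of $D_{v_i}$) that are only partially attached. We may take each $C_{v_i}$ to be a thin product neighborhood. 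Then $V$ deformation retracts onto $A\cup\bigcup_i \eta(D_{v_i})$, whose core is a graph $K\cup\bigcup_i D_{v_i}$ thickened. In particular $K$ is still a spine of $V$, and $P=V\cap S$ is $N(K)\cap S$ enlarged by the traces of the disks $D_{v_i}$ on $S$.

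The first step is to isotope the disks $D_{v_i}$, keeping them disjoint and keeping them in $B$, so that they intersect $S$ minimally, using a standard innermost-loop and outermost-arc argument in the ball complement $B$. Here $S$ cuts $B$ into pieces whose structure is known, since $K\subset S$ is standard. After this isotopy, $P$ is a planar surface in $S$. It is a regular neighborhood of $K$ in $S$ with some of the complementary disks $\Delta^j$ and the outer region connected together by bands coming from the $D_{v_i}\cap S$. Concretely, each arc of $D_{v_i}\cap S$ contributes a band in $P$ joining boundary components of $N(K)\cap S$.

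The second step is to slide the loops $e^j$ inside $P\subset S$ so that each becomes boundary parallel in $P$. These slides are automatically $S$-slides, and they take place inside $V$. For each $j$, the loop $e^j$ bounds $\Delta^j\subset S$. If $\Delta^j\cap P$ contains bands, we slide the endpoint of $e^j$ across those bands, that is, over the other loops those bands are adjacent to. We do this inductively, innermost band first, pushing $e^j$ off each band, until $e^j$ cobounds an annulus in $P$ with a single component of $\bd{P}$. At every stage the result remains a bouquet of loops bounding disjoint disks in $S$, so it remains standard. Proposition~\ref{prop:arc-spine} is not directly applicable because $P$ need not be a parallelism surface for $V$. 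We use it only locally, to justify that slides within the planar surface can be organized in this way.

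The main obstacle is the combinatorics of the bands. Disks $D_{v_i}$ in $B$ with boundary running over several handles could produce bands that link the regions $\Delta^j$ in a complicated pattern, and then a single loop might not become boundary parallel by sliding alone. We would handle this with a complexity, namely the total number of arcs of $\bigcup_i D_{v_i}\cap S$. We would show that an outermost arc on some $D_{v_i}$ either can be removed by isotopy or yields a band inside some $\Delta^j$ across which $e^j$ can be slid, and that this slide strictly reduces the complexity. When the complexity reaches zero, every loop is boundary parallel, so $J$ is peripheral. Finally, Lemma~\ref{lem:b-parallel} provides a check that the resulting boundary circles bound disjoint disks outside $V$, consistent with $J$ being a standard spine of $V$.
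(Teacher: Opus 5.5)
Your proposal has a genuine gap, and the problem begins with the setup. The handlebody $V=A_x$ is fixed by the sweepout and the segment $l$. It must contain every $A_{l(\theta)}$, and the same $V$ is reused in Lemma~\ref{lem:sliding2} with $S_{t_1}$, so that $J$ and $J'$ can be compared inside one handlebody in the proof of Theorem~\ref{thm:bouquet}. Isotoping the disks $D_{v_i}$ to minimize $D_{v_i}\cap S$ changes $V$, hence changes $P=V\cap S$. You would then be proving the lemma for a different surface.

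Your model of $V$ is also wrong. A neck partially pinched along $D_{v_i}\subset B$ only adds to $A$ a collar on the neck annulus, and a thin tube around the core arc of $C_{v_i}$ stays outside $V$. (Full pinching would not give a handlebody at all.) So $V$ deformation retracts onto $A$, not onto $A$ with $2$-handles attached along the $D_{v_i}$. That space has Euler characteristic $1-g+m$, so it is not homotopy equivalent to a genus $g$ handlebody, which contradicts your own conclusion that $K$ is a spine of $V$. In particular, $P$ is not simply $N_S(K)$ with bands along the arcs of $D_{v_i}\cap S$.

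More seriously, the step that should do the work is missing. Your complexity, the number of arcs of $\bigcup_i D_{v_i}\cap S$, is unchanged by sliding a loop of the spine, because slides move $J$, not the disks or $S$. So your induction cannot terminate. Also, a band inside $\Delta^j$ attached to $N_S(K)$ has both feet on the single circle $c^j$, so ``slide $e^j$ over the other loops those bands are adjacent to'' has no content. Finally, sliding one petal over another in $S$ generally destroys standardness, so only the final bouquet can be made standard.

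What you actually need is a reason why a loop that has stopped being boundary parallel can be moved, inside $V$ and by $S$-slides, onto a boundary circle of $P$. The paper gets this in two steps:
\begin{enumerate}
\item[(i)] It follows the increasing family $V_r=A_{\ell(r)}$ from $A$ to $V$. Then $P_r$ changes by a single band at each saddle tangency, and since $P_r$ is planar, at most one loop $e^1$ of the current peripheral spine stops being peripheral.
\item[(ii)] It uses Lemma~\ref{lem:b-parallel} to cap off the other circles $c^j$ by disjoint disks outside $V_r$. This gives an unknotted solid torus $U$ in which $e^1$ is isotopic to a circle $c\subset\partial P_r$, and that isotopy is realized by $S$-slides.
\end{enumerate}
You invoke Lemma~\ref{lem:b-parallel} only as an after-the-fact consistency check, whereas it is the input that makes each inductive step go through.
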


\begin{proof}
    Consider a segment $\ell:[0,1] \to \sigma$ with $\ell(0)=0$ and $\ell(1)=x$. See Figure~\ref{fig:cube}. Put $V_r:=A_{\ell(r)}$. By definition $V_0=A$, $V_1=V$ and $V_r \subset V_{r'}$ for $0 \le r \le r' \le 1$. 

    Let $0<r_1<r_2<\cdots<r_{n-1}<r_n<1$ be the values of $r$ where $\bd{V_r}$ tangents to $S$ in a saddle point. Put $P_r:=V_r \cap S$. By definition, $P_0=A \cap S$ and $P_1=P$. It is sufficient to find a finite sequence of standard spines $J_i \subset S$, $i=1,2,\cdots,n$, with the following properties. 
    \begin{itemize}
        \item $J_0=K$. 
        \item $J_i$ is related to $J_{i-1}$ by a sequence of $S$-slides and isotopies inside $V$. 
        \item For $r \in (r_i,r_{i+1})$, $J_i \subset P_r$ and $J_i$ is peripheral in $P_r$.  
    \end{itemize}
    Indeed, once such $J_i$ are found, we can define $J=J_n$.  

    Set $J_0=K$. It is evident that $J_0$ is peripheral in $P_0$. We argue by induction. Suppose that $J_{i-1}$ has already been defined. Since $J_{i-1}$ is standard, $J_{i-1}$ is formed by loops $e^j_{i-1} \subset S$, $j=1,2,\ldots,g$. Moreover, as $J_{i-1}$ is peripheral in $P_r$ for $r \in (r_{i-2},r_{i-1})$, there is a circle $c^j_{i-1} \subset \bd{P_r}$  such that $e^j_{i-1}$ is parallel to $c^j_{i-1}$ in $P_r$. 

    \begin{claim*}
        When we pass from $r_i-\epsilon$ to $r_i+\epsilon$, all the loop $e^j_{i-1}$ are still parallel to $\bd{P_{r_i+\epsilon}}$ but at most one loop. 
    \end{claim*}

    \begin{proof}
        Note that $P_{r_i+\epsilon}$ is obtained from $P_{r_i-\epsilon}$ by attaching a 2D $1$-handle. Since $P_r$ is a subsurface of a $2$-sphere, the $1$-handle meets at most one of $c^j_{i-1}$. 
    \end{proof}

    Without loss of generality, we may assume that $e^1_{i-1}$ is the only loop that is not boundary parallel in $P_{r_i+\epsilon}$. By Lemma~\ref{lem:b-parallel}, the circles $c^j_{i-1}$, $j \neq 1$, bound pairwise disjoint disks in the complement of $\interior V_{r_i+\epsilon}$. Attaching $2$-handles to $V_{r_{i+\epsilon}}$ along these disks gives rise to an unknotted solid torus $U$. Thus, $e^1_{i-1}$ is isotopic to some loop $c \subset \bd{P_{r_i+\epsilon}}$ in $U$. This implies that $e^1_{i-1}$ is isotopic to $c$ in $V_{r_i+\epsilon}$ after a sequence of $S$-slides. See Figure~\ref{fig:sliding}. 
    
    Now define $J_i \subset S$ to be a standard spine formed by the loops 
    \begin{eqnarray*}
        c, e^2_{i-1},\ldots,e^g_{i-1}.
    \end{eqnarray*} 
    By construction, $J_i$ is related to $J_{i-1}$ by a sequence of $S$-slides followed by an isotopy within $V_{r_i+\epsilon}$. Moreover, $J_i$ is peripheral in $P_{r_i+\epsilon}$. This completes the inductive step and hence the proof of the lemma.  
\end{proof}

\begin{figure}
        \begin{overpic}[scale=.6]{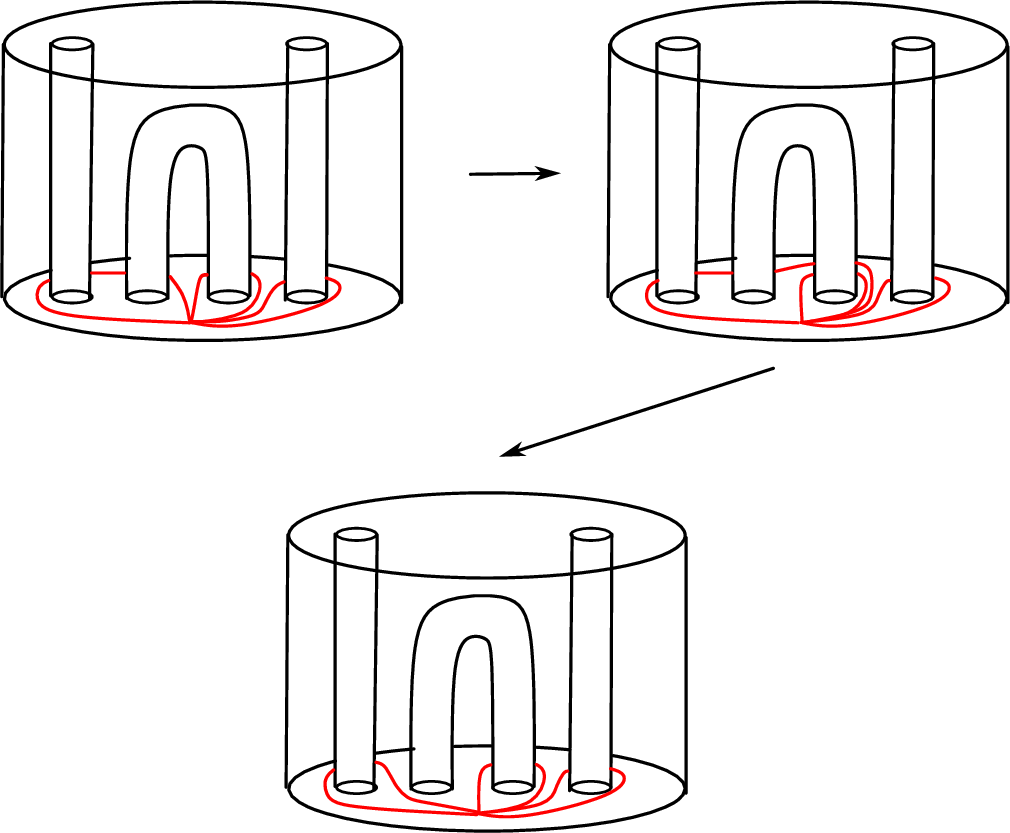}
            \put(50,225){$V_{r_i+\epsilon}$}
            \put(100,152){$J_{i-1}$}
            \put(180,10){$J_i$}
        \end{overpic}
        \caption{$J_i$ is related to $J_{i-1}$ by a sequence of $S$-slides followed by an isotopy.}
        \label{fig:sliding}
\end{figure}

The next step of the proof of Theorem~\ref{thm:bouquet} is the following lemma, which is essentially identical to Lemma~\ref{lem:sliding}. 

\begin{lemma}\label{lem:sliding2}
    Let $Q:=V \cap S_{t_1}$. Then, there exists a standard peripheral spine $J' \subset Q$ such that $J'$ is related to $K'$ by a sequence of $S_{t_1}$-slides and isotopies inside $V$. 
\end{lemma}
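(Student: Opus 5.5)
We mirror the proof of Lemma~\ref{lem:sliding}, running the monotone family of handlebodies in the opposite direction. Take the segment $\ell':[0,1]\to\sigma$ from $t_1$ to $x$ (Figure~\ref{fig:cube}), and set $W_r:=A_{\ell'(r)}$. Passing from $t_1$ to $x$ only undoes the pinchings of the necks $C_{v_{m+1}},\ldots,C_{v_k}$ (disks in $A$), while the coordinates $\alpha_1(1),\ldots,\alpha_m(1)$ (disks in $B$) stay fixed. Hence $W_r$ is nested increasing, with $W_0=A_{t_1}$, $W_1=V$, and $W_r\subset V$ for all $r$. Put $Q_r:=W_r\cap S_{t_1}$, so that $Q_1=Q$. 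After a generic perturbation, let $r_1<\cdots<r_n$ be the parameters at which $\bd{W_r}$ has a saddle tangency with $S_{t_1}$.

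The base case is at $r=0$. Since $T_{t_1}$ is nearly parallel to $S_{t_1}$, after removing negligible intersections $P_{t_1}=Q_0$ is a parallelism surface for $W_0$. By Proposition~\ref{prop:arc-spine}, the given spine $K'\subset Q_0$ is related, by edge slides and isotopies inside $Q_0$, to any other spine in $Q_0$. These slides have all intermediate spines in $S_{t_1}$, so they are $S_{t_1}$-slides. Since $Q_0$ is a subsurface of a sphere with $g+1$ boundary circles, we can choose a standard spine $J'_0\subset Q_0$: a bouquet of $g$ loops, each parallel to one of $g$ chosen boundary circles of $Q_0$, bounding disjoint disks in $S_{t_1}$. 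This spine is peripheral in $Q_0$. The one point to check is that removing negligible intersections does not affect this, because negligible pieces can be pushed off without touching $K'$ or $J'_0$.

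The inductive step copies Lemma~\ref{lem:sliding}. Suppose $J'_{i-1}$ is standard and peripheral in $Q_r$ for $r\in(r_{i-1},r_i)$. Crossing $r_i$ attaches a 2D $1$-handle to $Q_r$ (since $W_r$ grows). As $Q_r$ is planar, this handle meets at most one of the circles $c^j$ parallel to the loops $e^j$, as in the Claim. The remaining circles bound disjoint disks outside $\interior W_{r_i+\epsilon}$ by Lemma~\ref{lem:b-parallel}. Attaching $2$-handles along them yields an unknotted solid torus, in which the exceptional loop is isotopic to a boundary circle $c$ of $Q_{r_i+\epsilon}$. We realize this isotopy by $S_{t_1}$-slides, replace the exceptional loop by $c$, and obtain $J'_i$. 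We then set $J':=J'_n$.

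The main obstacle is the monotonicity and containment claim: that along $\ell'$ the handlebodies only grow and stay inside $V$. This requires seeing, from the cube coordinates, that un-pinching $A$-side necks enlarges $A_t$, and that the $B$-side necks are fixed at their values $\alpha_i(1)$ in both $t_1$ and $x$. A second point to verify is that the Claim still applies to $Q_r$ even though the $B$-necks make $Q_0$ possibly disconnected. Here we use that all boundary circles relevant to $J'$ lie in the component containing $J'$, so handles between components do not disturb peripherality.
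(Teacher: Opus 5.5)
Your proposal matches the paper's proof. It uses the same segment $\ell'$ from $t_1$ to $x$, giving nested handlebodies $A_{t_1}\subset W_r\subset V$. It has the same base step, using near-parallelism and Proposition~\ref{prop:arc-spine} to replace $K'$ by a standard peripheral spine of $Q_0$, followed by a rerun of the inductive argument of Lemma~\ref{lem:sliding}.

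Your closing worry about disconnectedness is unnecessary, and your assertion that bands between components never disturb peripherality is not true in general; both are harmless. After removing negligible intersections, $Q_0$ is a parallelism surface and hence connected. Any band that does disturb a loop is already covered by the Claim's ``at most one loop'' replacement step, so the argument does not rely on that assertion.
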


\begin{proof}
    Let $\ell':[0,1] \to \sigma$ be a segment with $\ell'(0)=t_1$ and $\ell'(1)=x$. See Figure~\ref{fig:cube}.  Put $V'_r:=A_{\ell'(r)}$. By definition $V'_0=A_{t_1}$ $V'_1=V$ and $V'_r \subset V'_{r'}$ for $0 \le r \le r' \le 1$. 
    
    As before, put $Q_r:=V'_r \cap S_{t_1}$. By definition, $Q_0=A_{t_1} \cap S_{t_1}$ and $Q_1=Q$. Since $T_{t_1}$ is nearly parallel to $S_{t_1}$, there is a standard peripheral spine $K'' \subset Q_0$. By replacing $K'$ with $K''$ via Proposition~\ref{prop:arc-spine}, we may assume that $K'$ is standard and peripheral in $Q_0$. Now apply the same argument as the proof of Lemma~\ref{lem:sliding} to obtain a sequence of standard peripheral spines $J'_i \subset Q$, $i=1,2,\ldots,n$. Then, $J'=J'_n$ satisfies the desired properties. 
\end{proof}

We now complete the proof of Theorem~\ref{thm:bouquet}. 

\begin{proof}[Proof of Theorem~\ref{thm:bouquet}]
    Let $J$ and $J'$ be as given in Lemmas~\ref{lem:sliding} and \ref{lem:sliding2}. Then, the loops of $J$ are isotopic to circles $c^1,c^2,\ldots,c^g \subset \bd{V}$. By Lemma~\ref{lem:b-parallel}, $c^i$ bound parwise disjoint compressing disks in $W=S^3 \setminus \interior V$, and these disks cut $W$ into a $3$-ball. Similarly, the loops of $J'$ are isotopic to circles $d^1,d^2,\ldots,d^g \subset \bd{V}$ with the same property. Since $c^j$ and $d^i$ are disjoint from each other, $\bigcup c^j$ must be isotopic to $\bigcup d^j$ in $\bd{V}$. Thus, $J$ and $J'$ are isotopic in $V$ (Figure~\ref{fig:flipping}). This together with Lemmas~\ref{lem:sliding} and \ref{lem:sliding2} implies that $K'$ is related to $K$ by a sequence of $S$-, $S_{t_1}$-slides and isotopies inside $V$.  
\end{proof}

\begin{figure}
        \begin{overpic}[scale=.6]{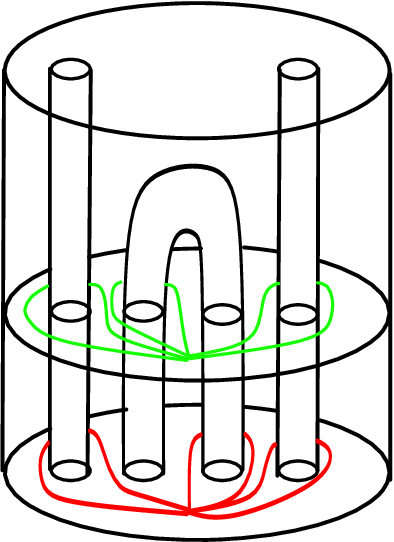}
            \put(-15,15){$S$}
            \put(-15,60){$S_{t_1}$}
        \end{overpic}
        \caption{$J$ (red) is isotopic to $J'$ (green) in $V$.}
        \label{fig:flipping}
\end{figure}

We conclude this section by rephrasing Theorem~\ref{thm:bouquet} in a form that is actually used in the next section. 

\begin{corollary}\label{cor:bouquet}
    Suppose that $l:[0,1] \to B^d$ is a segment with $l(0)=0$ and $l(1)=t_1$ such that $T_{t_1}$ is nearly parallel to $S_{t_1}$. Let $K' \subset S_{t_1}$ be a spine of $A_{t_1}$. Then, the isotopy $T_{l(\theta)}$ is equivalent to $T'_\theta$ such that $T'_\theta$ is supported by a family of spines $K_\theta$ with $K_0=K$ and $K_1=K'$ that arises from a sequence of $S$- and $S_{t_1}$-slides as well as isotopies. 
\end{corollary}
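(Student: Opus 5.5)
We deduce Corollary~\ref{cor:bouquet} from Theorem~\ref{thm:bouquet} by replacing the isotopy $T_{l(\theta)}$ with one carried by spines. The common ambient object is the handlebody $V$ of Theorem~\ref{thm:bouquet}. It contains every $A_{l(\theta)}$ as a deformation retract, so each $A_{l(\theta)}$ is a regular neighborhood of a spine of $V$. Theorem~\ref{thm:bouquet} also gives a family of spines $K_\theta \subset V$ with $K_0=K$ and $K_1=K'$. This family is produced by finitely many $S$-slides, $S_{t_1}$-slides and isotopies inside $V$. The plan is to define $A'_\theta$ as a thin regular neighborhood $N(K_\theta)$ inside $V$, set $T'_\theta=\bd{N(K_\theta)}$, and show that $T'_\theta$ is equivalent to $T_{l(\theta)}$.

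First we set up the endpoints. Because $K=K_0$ is a spine of $A=A_{l(0)}$, uniqueness of regular neighborhoods gives an isotopy inside $A$ from $A$ to $N(K_0)$. Likewise, since $K'$ is a spine of $A_{t_1}$, there is an isotopy inside $A_{t_1}$ from $A_{t_1}$ to $N(K_1)$. These short isotopies are concatenated at the two ends of $T'_\theta$, so that $T'_0=T$ and $T'_1=T_{t_1}$. Each edge slide or isotopy of spines inside $V$ then gives an isotopy of thin neighborhoods inside $V$. Hence $T'_\theta$ is a genuine isotopy of Heegaard surfaces, and it is supported by $K_\theta$ by construction.

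Next we compare the two isotopies. Both $A_{l(\theta)}$ and $A'_\theta$ are sub-handlebodies of $V$ onto which $V$ deformation retracts. I would use the fact that the space of such ``regular neighborhood'' sub-handlebodies of a fixed handlebody $V$ is connected, and that its loops are trivial up to isotopy. Concretely, each one is isotopic within $V$ to a collar-shrunk copy $V_\epsilon$ of $V$, by the uniqueness of $V\setminus \interior A \cong \bd{V}\times[0,1]$. A family version of this statement produces a two-parameter family $A_{\theta,u}$, $u\in[0,1]$, interpolating between $A_{l(\theta)}$ and $A'_\theta$ with fixed ends. This shows that the two paths in the space of Heegaard splittings are homotopic rel endpoints, and therefore equivalent as isotopies.

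The main obstacle is this last step: checking that the homotopy can be chosen rel endpoints, and that no extra mapping class of $V$ appears. A priori the composite loop, from $A_{l(\theta)}$ out to $V_\epsilon$ and back along $A'_\theta$, could represent a nontrivial diffeomorphism of $V$ that is isotopic to the identity in $S^3$. I would first handle this by showing that each piece is a concatenation of moves supported in collars, where the product structure $\bd{V}\times[0,1]$ gives canonical contractions. If that is insufficient, I would instead subdivide $[0,1]$ at the finitely many slide times. On each subinterval both $A_{l(\theta)}$ and $N(K_\theta)$ vary by isotopies supported in a fixed region, and I would match them piece by piece.
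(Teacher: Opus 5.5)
Your proposal is correct and follows essentially the same route as the paper: take $V$ and $K_\theta$ from Theorem~\ref{thm:bouquet}, set $T'_\theta=\bd{N(K_\theta)}$, and compare $T'_\theta$ with $T_{l(\theta)}$ by viewing both as boundaries of product neighborhoods of $\bd{V}$ in $V$. The obstacle you single out is exactly what the paper settles in one line by citing the contractibility of the space of such product neighborhoods, after enlarging $V$ slightly so that $A_{l(\theta)}\subset\interior V$ (this enlargement is also what makes $V\setminus\interior A_{l(\theta)}$ a genuine collar), so your fallback arguments are not needed.
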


\begin{proof}
    Let $V$ and $K_\theta$ be as given in Theorem~\ref{thm:bouquet}. After enlarging $V$ slightly, we may assume that $A_{l(\theta)} \subset \interior V$. Define an isotopy of a Heegaard surface by $T'_\theta=\bd{N(K_\theta)}$, where $N(K_\theta)$ is a regular neighborhood of $K_\theta$. We can view both $T_{l(\theta)}$ and $T'_\theta$ as the boundaries of product neighborhoods of $\bd{V}$. Since the space of product neighborhoods is contractible, $T_{l(\theta)}$ is equivalent to $T'_\theta$.  
\end{proof}

\section{Parallelizing Heegaard surfaces}\label{sec:parallelization}

In this section, we establish a key step for the proof of Theorem~\ref{thm:Powell}. The argument here is basically the same as the proof of Theorem~\ref{thm:normal_position}, but one more parameter is introduced. 

Let $\{T_t\}$ be as before. Let $\tau_\theta:S^3 \to S^3$, $\theta \in [0,2\pi]$, be an isotopy with $\tau_0=\id_{S^3}$ and $\tau_{2\pi}(T)=T$. Then, define a new sweepout $\{T_{t\theta}\}$ by 
\begin{eqnarray*}
    T_{t\theta}:=\tau_\theta(T_t). 
\end{eqnarray*}

We can easily see that the arguments in Sections~\ref{sec:sweepout} and \ref{sec:balanced_sphere} are still valid in this setting. We can define the graphics $\grph \subset B^d \times [0,2\pi] \times [-1,1]$ as in Section~\ref{sec:sweepout}, which consists of $(d+1)$-dimensional sheets that correspond to center or saddle singularities, as well as $d$- and $(d-1)$-dimensional loci that correspond to singularities of higher codimension. As before, we focus on the reduced graphic $\redg$, each of whose sheets corresponds to a saddle tangency. The next two propositions follow by the same arguments as Propositions~\ref{prop:graphic} and \ref{prop:balancedsphere}, respectively: 

\begin{proposition}\label{prop:balancedsphere2}
    There exists a function $z:B^d \times [0,2\pi]\to [-1,1]$ with the following property. For each $(t,\theta)$, $S_{z(t,\theta)}$ is a balanced sphere for $T_{t\theta}$. 
\end{proposition}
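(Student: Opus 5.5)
We will run the double sweepout argument of Proposition~\ref{prop:balancedsphere} parameter by parameter, now with parameters $(t,\theta) \in B^d \times [0,2\pi]$. For each such pair, let $I_a(t,\theta)$ (resp. $I_b(t,\theta)$) be the set of $s \in [-1,1]$ for which $T_{t\theta}$ is mostly above (resp. mostly below) $S_s$.

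First, we show these sets are intervals. If $s$ is near $-1$, then $T_{t\theta} \cap X^-_s$ is empty, so $T_{t\theta}$ is mostly above $S_s$. The property is also monotone: if $T_{t\theta}$ is mostly above $S_s$ and $s'<s$, then $T_{t\theta} \cap X^-_{s'} \subset T_{t\theta} \cap X^-_s$, and a subset of a negligible set is negligible. The same reasoning applies to $I_b$. It follows that $I_a(t,\theta)=[-1,s_a(t,\theta))$ and $I_b(t,\theta)=(s_b(t,\theta),1]$ with $s_a,s_b \in (-1,1)$.

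Second, we show $s_a(t,\theta) \le s_b(t,\theta)$. Suppose instead that $s_b<s_a$. Pick $s$ strictly between them with $T_{t\theta} \pitchfork S_s$. Then $T_{t\theta}$ is both mostly above and mostly below $S_s$. Every curve of $T_{t\theta} \cap S_s$ is then inessential on both sides, so the two disk pieces glue to show $T_{t\theta}$ is a sphere. This contradicts $T_{t\theta} = \tau_\theta(T_t)$ having genus $g>0$. Consequently the interval $[s_a(t,\theta),s_b(t,\theta)]$ is nonempty, and every $s$ in it gives a balancing sphere $S_s$.

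Third, we choose $z(t,\theta)$ from these intervals. This step is the main point needing care, because the statement asks for a function $z$ and, following Proposition~\ref{prop:balancedsphere}, it should be smooth or at least continuous. The functions $s_a$ and $s_b$ are generally only semicontinuous: $s_a$ is lower semicontinuous and $s_b$ upper semicontinuous, since being mostly above or below an $S_s$ that meets $T_{t\theta}$ transversely is an open condition in the parameters. The plan is to cover the compact parameter space $B^d \times [0,2\pi]$ by finitely many small open sets $U_i$. On each $U_i$ we pick a constant $c_i$ with $\sup_{U_i} s_a \le c_i \le \inf_{U_i} s_b$; this is possible when $s_a<s_b$ locally, using semicontinuity. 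We then glue the constants with a partition of unity. Convexity of the intervals guarantees that the resulting convex combination still lies in $[s_a,s_b]$.

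Where $s_a=s_b$, the interval is a single point, and the convex combination may fail to stay in it. In that degenerate case we fall back on the convention of Proposition~\ref{prop:balancedsphere}, namely a pointwise choice, which is all that the later arguments use. If continuity is genuinely required there, we perturb using the general position of $\{T_{t\theta}\}$ and $\{S_s\}$ from the graphic analysis.
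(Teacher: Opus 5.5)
Your Steps 1 and 2 are exactly the paper's argument. The paper proves this proposition by rerunning the proof of Proposition~\ref{prop:balancedsphere} at each $(t,\theta)$ and then choosing $z(t,\theta)\in[s_a(t,\theta),s_b(t,\theta)]$ pointwise. Since the statement asks only for a function, that pointwise choice already completes the proof.

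Your Step 3 is not needed for this statement, and as written it does not work.

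\textbf{The gluing step.} You correctly note that $s_a$ is lower semicontinuous and $s_b$ upper semicontinuous, because the set of $(t,\theta,s)$ with $T_{t\theta}$ mostly above $S_s$ is open. But this is the wrong direction for your gluing. It bounds $s_a$ from below and $s_b$ from above near a point. So it gives no control on $\sup_{U_i}s_a$ versus $\inf_{U_i}s_b$, and the constants $c_i$ need not exist even where $s_a<s_b$ pointwise. An insertion argument of Kat\v{e}tov--Tong type needs the lower bound to be upper semicontinuous and the upper bound lower semicontinuous.

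\textbf{Whether continuity is used later.} It is not accurate that later arguments use only a pointwise choice. After Proposition~\ref{prop:graphic2} the parameter space is identified with the graph of $z$ and triangulated. Proposition~\ref{prop:inclusions2}(2) relies on moving continuously off a face. So continuity is implicitly used; the paper leaves this unaddressed as well.

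\textbf{A direct route to continuity.} Off the graphic, the mostly-above status is locally constant. Hence the boundary of $\{(t,\theta,s): s<s_a(t,\theta)\}$ lies in the graphic. A jump of $s_a$ at $(t_0,\theta_0)$ would force the whole segment $\{(t_0,\theta_0)\}\times\bigl(s_a(t_0,\theta_0),\limsup s_a\bigr)$ into that boundary. This contradicts the fact that $f|_{T_{t_0\theta_0}}$ has finitely many critical values. So $s_a$, and likewise $s_b$, is continuous, and $z=s_a$ is a continuous choice.
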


\begin{proposition}\label{prop:graphic2}
    Suppose that $\{T_{t\theta}\}$ and $\{S_s\}$ are in general position. Then, $\redg$ is the union of $(d+1)$-dimensional sheets that intersect each other transversely. If $(t,\theta,s)$ lies in the intersection of $p$ sheets, then $T_{t\theta} \cap S_s$ contains exactly $p$ saddle tangencies.  
\end{proposition}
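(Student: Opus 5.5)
The statement to prove is Proposition~\ref{prop:graphic2}. I would prove it as a parametric version of Proposition~\ref{prop:graphic}, with the extra parameter $\theta$ raising the dimension of every stratum by one.

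\textbf{Genericity.} Consider the family of functions $f|_{T_{t\theta}}$ on the genus $g$ surface, parametrized by $(t,\theta) \in B^d \times [0,2\pi]$. The parameter space has dimension $d+1$.
\begin{enumerate}
\item First perturb the ambient isotopy $\tau_\theta$, relative to $\theta=0$ and $\theta=2\pi$ if needed, so that the combined map $(t,\theta,u) \mapsto f(u)$ is generic.
\item Apply parametric transversality (Thom's multijet transversality) to the map $B^d \times [0,2\pi] \times T \to [-1,1]$, given by $(t,\theta,u) \mapsto f(\tau_\theta(\phi_t(u)))$, where $\phi_t$ is the canonical identification $T \to T_t$.
\item Conclude that the $2$-jet of $f|_{T_{t\theta}}$ is transverse to the strata of the jet space:
 \begin{itemize}
 \item Nondegenerate critical points have codimension $2$ in $T \times B^d \times [0,2\pi]$, since the condition $df=0$ imposes two equations. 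Their image under $(t,\theta,u) \mapsto (t,\theta,f(u))$ is therefore a $(d+1)$-dimensional sheet.
 \item Birth/death points lie in codimension one inside that locus, giving $d$-dimensional loci.
 \item Higher degeneracies (swallowtail-type) give $(d-1)$-dimensional loci.
 \item Any worse singularity has codimension exceeding $d+2$ and is avoided. If it is not avoided, one only uses that the reduced graphic ignores these loci.
 \end{itemize}
\end{enumerate}

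\textbf{Multiple tangencies.} Take $p$ distinct saddle critical points $u_1,\dots,u_p$ lying at the same level $s$. Apply multijet transversality to the $p$-fold product. The condition that all of them sit at level $s$ is transverse, so the sheets through $(t,\theta,s)$ meet transversely, exactly as in Proposition~\ref{prop:graphic}.

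\textbf{Counting at a point.} Remove the center sheets and the cusp loci to obtain $\redg$. At a point lying on exactly $p$ sheets, each sheet records exactly one saddle tangency, and transversality rules out two tangencies hiding on the same sheet. Hence $T_{t\theta} \cap S_s$ contains exactly $p$ saddle tangencies. Since the sheets have codimension one in the $(d+2)$-dimensional space $B^d\times[0,2\pi]\times[-1,1]$, at most $d+2$ of them can meet.

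\textbf{Main obstacle.} The delicate step is arranging genericity while keeping the endpoint condition $\tau_{2\pi}(T)=T$ and the prescribed sweepout at $\theta=0$. I would handle this by perturbing only on the interior $\theta \in (0,2\pi)$, after first perturbing $\{T_t\}$ itself to be generic as in Proposition~\ref{prop:graphic}, so that the two endpoint slices are already generic.
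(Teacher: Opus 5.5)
Your proposal takes essentially the paper's approach: the paper only says the statement follows by the same general-position argument as Proposition~\ref{prop:graphic}, with $\theta$ as one more parameter, and you make that argument explicit via parametric multijet transversality. Your endpoint discussion is not needed, since general position is a hypothesis of the proposition. As written it would also not make the slice $\{\tau_{2\pi}(T_t)\}$ generic, because $\tau_{2\pi}$ preserves $T$ but not $f$.

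One correction: your claim that nothing worse than the swallowtail-type ($A_3$) degeneracies occurs is false once $d\ge 2$. For example, the $A_4$ and $D_4$ strata have codimension $3$ in the $(d+1)$-dimensional parameter space; the paper's description of $\grph$ has the same imprecision. So what actually yields the transversality and the exact count $p$ is your fallback: $\redg$ consists only of the open sheets of Morse saddle tangencies, with all degenerate loci discarded.
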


We write $S_{t\theta}=S_{z(t,\theta)}$ for short. Denote by $\Sigma_{t\theta}$ the preimage of $\Gamma_{S_{t\theta}}(T_{t\theta})$ in $\Gamma(T)$ under the canonical identification $(S^3,T) \to (S^3,T_{t\theta})$. 

We will identify $B^d \times [0,2\pi]$ with the graph of the function $z:B^d \times [0,2\pi] \to [-1,1]$, and fix a triangulation $\sCyl$ of $B^d \times [0,2\pi]$ such that $\sCyl$ contains the closure of $\redg \cap B^d \times [0,2\pi]$ as a subcomplex. 

\begin{proposition}\label{prop:inclusions2}
    Let $\sigma$ be a simplex of $\sCyl$. 
    \begin{enumerate}
        \item If $(t,\theta), (t',\theta') \in \interior \sigma$, then $\Sigma_{t\theta}=\Sigma_{t'\theta'}$. 
        \item If $(t',\theta') \in \bd{\sigma}$ and $(t,\theta) \in \interior \sigma$, then $\Sigma_{t'\theta'} \subset \Sigma_{t\theta}$. 
    \end{enumerate}
\end{proposition}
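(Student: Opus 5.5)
The plan is to copy the proof of Proposition~\ref{prop:inclusions}, now over the $(d+1)$-dimensional parameter space $B^d \times [0,2\pi]$. We use Proposition~\ref{prop:graphic2} in place of Proposition~\ref{prop:graphic}. Throughout, $B^d \times [0,2\pi]$ is identified with the graph of $z$, so a point $(t,\theta)$ records the pair $(T_{t\theta},S_{t\theta})$. For each $p$, let $\mathcal{R}_p$ be the set of points of this graph lying in exactly $p$ sheets of $\redg$. By Proposition~\ref{prop:graphic2}, these points are exactly the pairs for which $T_{t\theta}\cap S_{t\theta}$ has exactly $p$ saddle tangencies. Since $\sCyl$ contains the closure of $\redg$ as a subcomplex, the interior of each simplex $\sigma$ lies in a single path component of a single $\mathcal{R}_p$.

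For (1), take $(t,\theta),(t',\theta')\in\interior\sigma$ and a path between them inside $\interior\sigma$. Along this path the number of saddle tangencies stays constant. The only changes in $T\cap S$ come from center tangencies and birth/death singularities, which create or delete inessential loops, so they affect only negligible intersections. Hence the intersection patterns of $(T_{t\theta},S_{t\theta})$ and $(T_{t'\theta'},S_{t'\theta'})$ agree up to negligible intersections, after composing with the canonical identifications with $(S^3,T)$.

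Next we check that $\Sigma$ depends only on this pattern. A shadow can be pushed off negligible intersections, and its boundary can be carried along an isotopy that keeps the essential intersection loops fixed. So the set of vertices admitting shadows, pulled back to $\Gamma(T)$, is the same at both points, which gives $\Sigma_{t\theta}=\Sigma_{t'\theta'}$.

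For (2), take $(t',\theta')\in\bd{\sigma}$ and $(t,\theta)\in\interior\sigma$. Moving from $(t',\theta')$ into $\interior\sigma$ only resolves some of the saddle tangencies of $T_{t'\theta'}\cap S_{t'\theta'}$, up to negligible intersections. This holds because $\bd\sigma$ lies in at least as many sheets as $\interior\sigma$, and near a point of $\bd\sigma$ the parameters in $\interior\sigma$ lie on one side of each extra sheet. So there is a small product neighborhood $N$ of $S_{t'\theta'}$ such that $T_{t'\theta'}\setminus N$ corresponds to a subset of $T_{t\theta}\setminus N$.

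Now let $D$ be a shadow for a vertex of $\Sigma_{t'\theta'}$. Push $\bd{D}$ off $N$ by an isotopy in $T$; since $D$ is disjoint from $S_{t'\theta'}$, it can then be taken disjoint from $N$. Transported to $(t,\theta)$, it remains disjoint from $S_{t\theta}\subset N$, and its interior still meets $T_{t\theta}$ only in inessential loops. Hence it is a shadow there, and $\Sigma_{t'\theta'}\subset\Sigma_{t\theta}$.

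The main obstacle is the local-model step: showing that the extra parameter $\theta$ and the new codimension-two singularities of $\grph$ do not change the picture inside a stratum. Those singularities involve only centers and births/deaths, so they affect only negligible intersections, and passing from a face of $\sigma$ into $\interior\sigma$ really is a resolution of saddles. We also need that $z$ can be chosen so that $S_{t\theta}$ moves continuously within the strata. We would handle this by taking $z$ generic, transverse to $\redg$, as in Proposition~\ref{prop:balancedsphere2}.
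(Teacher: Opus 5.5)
Your proposal is correct and follows essentially the paper's argument. The paper proves this proposition only by saying it is the same as the proof of Proposition~\ref{prop:inclusions}, and you transplant exactly that proof: constancy of the strata $\mathcal{R}_p$ of $\redg$ on simplex interiors, giving equal intersection patterns up to negligible intersections for (1), and resolving saddle tangencies and then pushing a shadow off a thin product neighborhood $N$ of $S_{t'\theta'}$ for (2). In (2), make explicit that by (1) you may take $(t,\theta)$ close to $(t',\theta')$; this is what justifies $S_{t\theta}\subset N$.
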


\begin{proof}
    This is the same as the proof of Proposition~\ref{prop:inclusions}. 
\end{proof}

\begin{lemma}\label{lem:extension2}
    Let $Y$ be a submanifold of $\interior B^d \times [0,2\pi]$. Suppose that $\Sigma_{t\theta} \neq \emptyset$ for any $(t,\theta) \in Y$. Then, there exists a map $\psi:Y \to \Gamma(T)$ such that $\psi(t,\theta) \in \Sigma_{t\theta}$ for any $(t,\theta) \in Y$. Moreover, $\psi$ is unique up to homotopy. 
\end{lemma}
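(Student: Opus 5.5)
The plan is to copy the proof of Lemma~\ref{lem:extension} almost verbatim, replacing $B^d$ by $B^d \times [0,2\pi]$, Proposition~\ref{prop:inclusions} by Proposition~\ref{prop:inclusions2}, and using Proposition~\ref{prop:empty-contractible} for the pairs $(T_{t\theta},S_{t\theta})$. Proposition~\ref{prop:empty-contractible} applies because, by Proposition~\ref{prop:balancedsphere2}, $S_{t\theta}$ is a balancing sphere for $T_{t\theta}$. So whenever $\Sigma_{t\theta}\neq\emptyset$, the set $\Sigma_{t\theta}$, being the preimage of $\Gamma_{S_{t\theta}}(T_{t\theta})$ under a simplicial isomorphism, is contractible.

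First we refine the triangulation $\sCyl$ (after a small isotopy of $Y$ if needed) so that $Y$ is a subcomplex and the closure of $\redg\cap B^d\times[0,2\pi]$ is still a subcomplex. Refining preserves the conclusions of Proposition~\ref{prop:inclusions2}: a simplex of the refinement lies in a simplex of $\sCyl$, and its open interior lies either in the open interior of that simplex or in one of its faces. For the $0$-skeleton, we send each vertex $(t,\theta)$ of $Y$ to a vertex of $\Sigma_{t\theta}$; such a vertex exists since $\Sigma_{t\theta}\neq\emptyset$ by hypothesis.

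Inductively, suppose $\psi$ is defined on $Y^{k-1}$ with $\psi(t',\theta')\in\Sigma_{t'\theta'}$. Let $\sigma$ be a $k$-simplex of $Y$ and fix $(t_0,\theta_0)\in\interior\sigma$. For any $(t',\theta')\in\bd{\sigma}$, Proposition~\ref{prop:inclusions2}(2) gives $\psi(t',\theta')\in\Sigma_{t'\theta'}\subset\Sigma_{t_0\theta_0}$. So $\psi|_{\bd\sigma}$ maps into the contractible set $\Sigma_{t_0\theta_0}$, and it extends over $\sigma$ with image in $\Sigma_{t_0\theta_0}$. By Proposition~\ref{prop:inclusions2}(1), $\Sigma_{t\theta}=\Sigma_{t_0\theta_0}$ for every $(t,\theta)\in\interior\sigma$, so the extension satisfies $\psi(t,\theta)\in\Sigma_{t\theta}$. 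Doing this for every $k$-simplex completes the induction.

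For uniqueness, suppose $\psi'$ is another map with $\psi'(t,\theta)\in\Sigma_{t\theta}$. We build a homotopy on $Y\times[0,1]$ by the same skeleton induction, using the product cell structure $\sigma\times[0,1]$. On $\sigma\times\{0,1\}\cup\bd\sigma\times[0,1]$ the map already lies in $\Sigma_{t_0\theta_0}$, so it extends over the cell by contractibility. One subtlety is that $\psi'$ need not be simplicial, so the hypothesis $\psi'(t,\theta)\in\Sigma_{t\theta}$ must be used pointwise on open simplices; that is exactly what is needed. The main point to check is that refining $\sCyl$ does not break Proposition~\ref{prop:inclusions2}, and this holds by the face-interior argument above.
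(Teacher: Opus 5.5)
Your proposal is correct and is exactly the paper's argument: the paper proves this lemma only by saying it follows from the proof of Lemma~\ref{lem:extension} with Propositions~\ref{prop:empty-contractible} and \ref{prop:inclusions2} in place of the earlier ones, and that is the skeleton induction you carry out. Your write-up is slightly more careful than the source in two places: you invoke part (2) of Proposition~\ref{prop:inclusions2} for the boundary containment and part (1) for independence of the interior point, which is the logically correct assignment, and you spell out the relative skeleton induction that proves uniqueness.
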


\begin{proof}
    This follows by the same argument as the proof of Lemma~\ref{lem:extension} using Propositions~\ref{prop:empty-contractible} and \ref{prop:inclusions2}. 
\end{proof}

Next, we generalize the argument in Section~\ref{sec:bound4index} to the present setting. Consider the subset $L \subset \interior B^d \times [0,2\pi]$ given by  
\begin{equation*}
    L:=\{(t,\theta) \in \interior B^d \times [0,2\pi] \mid \Sigma_{t\theta} = \emptyset\}.
\end{equation*}
To prove the key lemma of this section, we need the following two facts. 
 
\begin{lemma}
    The subset $L$ is contained in the $1$-skeleton of $\sCyl$. 
\end{lemma}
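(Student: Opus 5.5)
We plan to combine the saddle count of Theorem~\ref{thm:normal_position} with the dimension count of Proposition~\ref{prop:graphic2}. Since $\Sigma_{t\theta}$ depends only on the open simplex of $\sCyl$ containing $(t,\theta)$ (Proposition~\ref{prop:inclusions2}(1)), the set $L$ is a union of open simplices. So it suffices to show that every open simplex $\interior\sigma\subset L$ has $\dim\sigma\le 1$.

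First, take $(t,\theta)\in L$, so $\Gamma_{S_{t\theta}}(T_{t\theta})=\emptyset$. We rerun the proof of Theorem~\ref{thm:normal_position} at this single parameter value. By Lemma~\ref{lem:empty->cells}, after removing negligible intersections, $T_{t\theta}\setminus S_{t\theta}$ is a union of open $0$- and $2$-cells. The Euler characteristic computation then gives
\[
2-2g=\chi(T_{t\theta})\ge 2-p' \ge 2-p,
\]
where $p$ is the number of saddle tangencies of $T_{t\theta}\cap S_{t\theta}$. Hence $p\ge 2g$.

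Second, by Proposition~\ref{prop:graphic2}, the point $(t,\theta,z(t,\theta))$ lies in exactly $p$ sheets of $\redg$. These sheets are $(d+1)$-dimensional, lie in the $(d+2)$-dimensional space $B^d\times[0,2\pi]\times[-1,1]$, and meet transversely. Their $p$-fold intersection therefore has dimension $d+2-p\le d+2-2g$. By Theorem~\ref{thm:h-index} (or directly from Lemma~\ref{lem:well-defined_index} together with $2g-1\le d$), we have $d=2g-1$, so this dimension is at most $1$.

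Third, we identify $B^d\times[0,2\pi]$ with the graph of $z$ and use that $\sCyl$ contains the closure of $\redg\cap(B^d\times[0,2\pi])$ as a subcomplex. Then any open simplex meeting $L$ lies in the stratum where at least $2g$ sheets meet, and that stratum has dimension at most $1$. Hence such a simplex has dimension at most $1$, and $L$ lies in the $1$-skeleton.

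The main obstacle is the transversality step: we must ensure that the graph of $z$ meets the multiple-intersection strata of $\redg$ in the expected dimension. The graph of $z$ is $(d+1)$-dimensional, and restricting to it could in principle keep the dimension of the $p$-fold stratum instead of cutting it down. We would argue as follows. A point with $\Sigma_{t\theta}=\emptyset$ and $p$ saddles is one where the balanced level is forced to sit at a common critical height, and $z$ can be chosen inside $[s_a,s_b]$ via Proposition~\ref{prop:balancedsphere2}. So we may take the relevant locus inside the $p$-fold stratum of $\redg$ itself, projected to $B^d\times[0,2\pi]$. That projection has dimension at most $d+2-p\le 1$ by general position.
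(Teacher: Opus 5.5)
Your proposal is correct and follows the paper's proof. At a point of $L$, Lemma~\ref{lem:empty->cells} and the Euler characteristic give $p\ge 2g\ge d+1$; for this you only need $d\le 2g-1$, which holds by the choice of $\psi_0$, not the full $d=2g-1$. Hence the point lies in the at most $1$-dimensional locus where $d+1$ transverse sheets of $\redg$ meet, and your reduction to open simplices via Proposition~\ref{prop:inclusions2}(1) just makes explicit a step the paper leaves implicit. The transversality worry in your last paragraph is unnecessary: intersecting this $(\le 1)$-dimensional locus with the graph of $z$ and projecting can only lower its dimension.
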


\begin{proof}
    Let $(t,\theta) \in L$. It suffices to show that $(t,\theta)$ lies in the intersection of $d+1$ sheets of $\redg$. Suppose that $(t,\theta)$ lies in the intersection of $p \ge 0$ sheets. By Proposition~\ref{prop:graphic2}, $T_{t\theta} \cap S_{t\theta}$ contains $p$ saddle tangencies. By Lemma~\ref{lem:empty->cells}, $T_{t\theta}$ is isotopic to a surface $T'_{t\theta}$ such that $T'_{t\theta} \setminus S_{t\theta}$ is a collection of open $0$- and $2$-cells. Since $T'_{t\theta}$ tangents $S_{t\theta}$ in $p' \le p$ saddle points, we have 
    \begin{eqnarray*}
        2-2g=\chi(T_{t\theta}) \ge 2-p' \ge 2-p. 
    \end{eqnarray*}
    On the other hand, $d \le 2g-1$ by Proposition~\ref{lem:well-defined_index}. Comparing these two inequalities, we conclude that $d+1 \le p$. 
\end{proof}

\begin{proposition}\label{prop:sweepout_near_boundary2}
    Let $B_r \subset B^d$ be the $d$-ball of radius $r>0$. If $r$ sufficiently close to $1$, $L$ is contained in $B_r \times [0,2\pi]$. 
\end{proposition}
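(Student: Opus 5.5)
We will show that for $r$ close to $1$, every point $(t,\theta)$ with $t \in B^d \setminus \interior B_r$ has $\Sigma_{t\theta} \neq \emptyset$. This is the parametrized version of the observation in the proof of Lemma~\ref{lem:local_incompressibility}. The key fact is that near $\bd{B^d}$ the surface $T_t$ has a thin neck, and the corresponding compressing disk can be kept away from the balancing sphere.

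The first step is to analyze a point $t$ near $\bd{B^d}$. Such a $t$ lies in some simplex $\sigma$ of $\sBall_0$ and can be written as $t=\sum r_k v_k$. Because $t$ is close to the boundary sphere $\sSph_0$, at least one coordinate $r_k$ with $v_k \neq 0$ is close to $1$. By construction, $T_t \cap C_{v_k}$ is then the image under $\eta^{r_k}_{v_k}$ composed with the other $\eta$'s, so it is a very thin tube around the core of $C_{v_k}$. We let $D_t$ be the compressing disk in $C_{v_k}$ whose boundary is the meridian of this tube. It represents $\psi_0(v_k)$, and its diameter can be made smaller than any prescribed $\delta$ once $r$ is close to $1$. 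Since $\bd{B^d}$ is compact and the triangulation is finite, $\delta$ can be chosen uniformly in $t$.

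The second step applies $\tau_\theta$. The disk $D_{t\theta}:=\tau_\theta(D_t)$ is a compressing disk for $T_{t\theta}$. Since $[0,2\pi]$ is compact, the family $\tau_\theta$ is uniformly continuous, so $D_{t\theta}$ has small diameter uniformly in $(t,\theta)$.

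The third and main step is to show that $D_{t\theta}$, or a disk with the same boundary, is disjoint from $S_{t\theta}$; it then serves as its own shadow. A small disk need not miss $S_{t\theta}$, so we would first try the following. If $\bd{D_{t\theta}}$ meets $S_{t\theta}$, we push the entire small neck slightly off the level sphere. Equivalently, we perturb $z$ near $\bd{B^d} \times [0,2\pi]$ within the interval $[s_a,s_b]$ of balancing heights. We must check that this interval has positive length, or at least that it does not force the level sphere to pass through the neck. The argument is that the neck is a tube of tiny diameter, so cutting the level sphere through it would change "mostly above" to "mostly below" only by an arbitrarily small change in $s$. Hence some balancing height misses a small ball containing $D_{t\theta}$.

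If this fails, the alternative is to note that $S_{t\theta} \cap \tau_\theta(C_{v_k})$ consists of small disks transverse to the tube. In that case we replace $D_{t\theta}$ by a parallel meridian disk of the tube lying between two consecutive such disks, which again gives a vertex of $\Sigma_{t\theta}$.

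Once $\Sigma_{t\theta}\neq\emptyset$ holds for all $t \notin \interior B_r$, the definition of $L$ immediately gives $L \subset B_r \times [0,2\pi]$.
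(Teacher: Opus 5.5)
Your overall plan is the paper's. Its entire proof is the one-line assertion that, for $r$ near $1$ and $t\notin B_r$, a compressing disk corresponding to a thin neck of $T_{t\theta}$ misses $S_{t\theta}$, so it is its own shadow and $\Sigma_{t\theta}\neq\emptyset$. Your first two steps and the final deduction are fine. Your third step, which is the only substantive point (and which you rightly flag as not automatic), is not actually established.

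Option (a) does not prove the stated proposition. Pushing the neck changes $T_{t\theta}$. Perturbing $z$ changes $\Sigma_{t\theta}$ and hence $L$ itself, which was fixed together with $\sCyl$ before this proposition and is used in that form in Lemma~\ref{lem:local_incompressibility2}. The justification you give is also a non sequitur: the thinness of the neck gives no lower bound on the length of $[s_a,s_b]$, since Proposition~\ref{prop:balancedsphere} only yields $s_a\le s_b$. So nothing you say rules out every admissible height meeting a small ball around $D_{t\theta}$.

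Option (b) has the right idea, but the claim that $S_{t\theta}\cap\tau_\theta(C_{v_k})$ consists of small disks transverse to the tube is false in general. The set $C_{v_k}$ has fixed size. Even inside the thin solid tube, the level sphere can be tangent to the core or run close along it, so there need not be ``consecutive'' transverse disks to fit a meridian disk between. You also give no uniformity in $(t,\theta)$.

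The missing idea is to use the \emph{length} of the neck rather than its width.
\begin{enumerate}
\item Take $\beta\equiv 1$ near $0$. Then the thin part of the neck in $C_v$ is a tube of radius $1-r_k\to 0$ around a fixed core arc $\alpha_v=\iota_v(\{0\}\times[-a,a])$.
\item After putting $\tau_\theta$ in general position, no $\tau_\theta(\alpha_v)$ lies in a level sphere. Since $\max_u|h(u)-s|\ge(\max h-\min h)/2$, compactness of $[0,2\pi]$ and finiteness of the vertex set give $m>0$ with $\max_u|f(\tau_\theta(\alpha_v(u)))-s|\ge m$ for all $v$, $\theta$, $s$.
\item By uniform continuity of $(\theta,x)\mapsto f(\tau_\theta(x))$, once $r$ is close to $1$, $f$ varies by less than $m$ on the $\tau_\theta$-image of every meridian disk of the thin part.
\item For $s=z(t,\theta)$, the meridian disk at a maximizing $u$ then misses $S_{t\theta}$ and its interior misses $T_{t\theta}$. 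It represents $\psi_0(v_k)$, so $\Sigma_{t\theta}\neq\emptyset$.
\end{enumerate}
This works for every choice of $z$, which is what the proposition requires.
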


\begin{proof}
    Suppose that $r$ is sufficiently close to $1$. If $(t,\theta)$ is a point in the complement of $B_r \times [0,2\pi]$, then a compressing disk that corresponds to a thin neck of $T_{t\theta}$ does not intersect $S_{t\theta}$. In particular, $\Sigma_{t\theta} \neq \emptyset$ and $(t,\theta) \not \in L$. Therefore, $L$ must be contained in $B_r \times [0,2\pi]$. 
\end{proof}

Now we can prove the key lemma, which generalizes Lemma~\ref{lem:local_incompressibility}: 

\begin{lemma}\label{lem:local_incompressibility2}
    There exists an arc $\delta:[0,1] \to L$ that connects $B^d \times \{0\}$ to $B^d \times \{2\pi\}$.  
\end{lemma}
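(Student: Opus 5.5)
We argue by contradiction, generalizing the proof of Lemma~\ref{lem:local_incompressibility} by one parameter. Suppose no component of $L$ meets both $B^d \times \{0\}$ and $B^d \times \{2\pi\}$. By the preceding lemma, $L$ lies in the $1$-skeleton of $\sCyl$, so $L$ is a finite graph. By Proposition~\ref{prop:sweepout_near_boundary2}, $L \subset B_r \times [0,2\pi]$ for $r$ close to $1$. Set $C:=B_r\times[0,2\pi]$, a $(d+1)$-ball whose boundary is $\partial B_r\times[0,2\pi]$ together with the two ends $B_r\times\{0\}$ and $B_r\times\{2\pi\}$.

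\textbf{Separation.} Since no component of the compact set $L$ connects the two ends of the cylinder $C$, a standard separation argument gives a compact codimension-one submanifold $Y \subset \interior B^d \times [0,2\pi]$ disjoint from $L$ that separates $B_r\times\{0\}$ from $B_r\times\{2\pi\}$ in $C$. After intersecting with $C$ and choosing a suitable component, $Y$ is a properly embedded $d$-manifold in $C$ with $\partial Y \subset \partial B_r\times(0,2\pi)$, and $\partial Y$ is homologous (in $\partial B_r \times[0,2\pi]$) to $\partial B_r\times\{0\}$. Because $Y\cap L=\emptyset$, Lemma~\ref{lem:extension2} provides a map $\psi:Y\to\Gamma(T)$ with $\psi(t,\theta)\in\Sigma_{t\theta}$.

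\textbf{Boundary behaviour.} On $\partial B_r\times[0,2\pi]$, the thin-neck disks of $T_{t\theta}$ are disjoint from $S_{t\theta}$, and they define a map into $\Gamma(T)$ lying in $\Sigma_{t\theta}$. Under the canonical identification $(S^3,T)\to(S^3,T_{t\theta})$, this map is $\psi_0$ composed with the projection to $\partial B_r$, since the identification is transported by $\tau_\theta$. By the uniqueness part of Lemma~\ref{lem:extension2}, applied on the whole neighbourhood of $\partial B_r\times[0,2\pi]$ where $\Sigma\neq\emptyset$, the restriction $\psi|_{\partial Y}$ is homotopic to this thin-neck map restricted to $\partial Y$. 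Hence $\psi|_{\partial Y}$ represents $\psi_0$ pushed through a degree-one map $\partial Y\to \partial B_r \cong S^{d-1}$. Being the restriction of $\psi$, it extends over $Y$.

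\textbf{Main obstacle.} The difficulty is deriving a contradiction when $Y$ is not a ball: an extension over a general $d$-manifold with boundary mapping with degree one onto $S^{d-1}$ need not force $\psi_0$ to be null-homotopic in $\pi_{d-1}$. The first approach is to arrange $Y$ to be a disk. Since $L$ is a graph in the $(d+1)$-dimensional $C$ with $d+1\ge 3$, we take $Y$ to be a slice $B_r\times\{\theta_0\}$ and push it off $L$ by general position relative to the boundary; a $d$-disk can be isotoped to avoid a graph wherever the graph crosses it. The real issue is that a component of $L$ may cross the slice an odd number of times. Our contradiction assumption says exactly that no component of $L$ runs from end to end, so each component can be capped off on one side and $Y$ can be tubed around it. This tubing may destroy the disk property. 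We therefore instead exploit the structure of $L$ as a $1$-dimensional cone-like set and argue homologically in $\pi_{d-1}$ via Hurewicz, using that $\Gamma(T)$ is $(d-2)$-connected with $d=2g-1$. Then $\pi_{d-1}=H_{d-1}$, and a degree-one boundary map that extends over $Y$ kills the class $[\psi_0]$. This contradicts the nontriviality of $\psi_0$, and therefore yields the arc $\delta\subset L$ connecting the two ends.
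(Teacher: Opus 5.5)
Your argument is correct and essentially the paper's. The paper takes $Y$ to be the frontier of a small regular neighborhood of $B_r \cup L'$, where $L'$ is the union of components of $L$ meeting $B^d\times\{0\}$, so that $\partial Y=\partial B_r$ and no side-wall comparison is needed; it then extends $\psi$ over this compact orientable $Y$ via Lemma~\ref{lem:extension2} and concludes by Hurewicz exactly as in your final step, so you can drop the attempted reduction to $Y$ being a disk (whose general-position claim fails for a codimension-one disk crossing an arc transversally) without loss.
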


\begin{proof}
    Let $r>0$ be sufficiently close to $1$. By passing to subdivisions, we may assume that $B_r \subset B^d \times \{0\}$ is a subcomplex of $\sCyl$. We define a map from the vertices of $\bd{B_r}$ to $\Gamma(T)$ by sending each vertex $t$ to a compressing disk $D_t$ that corresponds to a thin neck of $T_t$. Since $D_t \in \Sigma_t$, this map extends to $\psi:\bd{B_r} \to \Gamma(T)$ by Lemma~\ref{lem:extension}. Moreover, $\psi$ is homotopic to $\psi_0$. 

    Now suppose, contrary to our claim, that there does not exist an arc $\delta:[0,1] \to L$ that connects $B^d \times \{0\}$ to $B^d \times \{2\pi\}$. Let $L'$ denote the union of those path components of $L$ that intersect $B^d \times \{0\}$. By assumption and Proposition~\ref{prop:sweepout_near_boundary2}, $L'$ intersects the boundary of $B^d \times [0,2\pi]$ in finitely many points in $B_r$. Let $Y$ be the boundary of a small regular neighborhood of $B_r \cup L'$ (Figure~\ref{fig:regular_nbd}). Then, $Y$ is a compact orientable $d$-manifold with $\bd{Y}=\bd{B_r}$. Since $Y$ does not meet $L$, $\Sigma_{t\theta}$ is not empty for any $(t,\theta) \in Y$. Thus, we can extend $\psi$ over $Y$ via Lemma~\ref{lem:extension2}. But this is a contradiction since $\psi \simeq \psi_0$ is not homologically trivial by Hurewicz's theorem.  
\end{proof}

\begin{figure}
        \begin{overpic}[scale=.5]{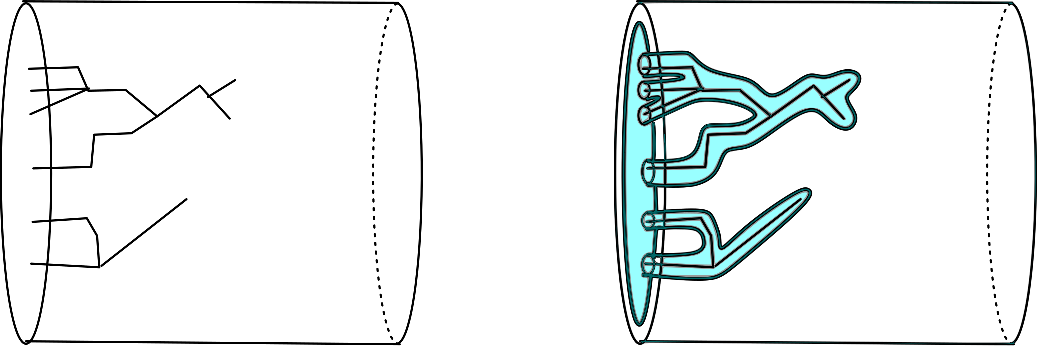}
            \put(52,42){$L'$}
            \put(200,42){$Y$}
        \end{overpic}
        \caption{A small neighborhood of $L' \cup B_r$.}
        \label{fig:regular_nbd}
\end{figure}

As a consequence of Lemma~\ref{lem:local_incompressibility2} we have 

\begin{theorem}[store=parallelism]\label{thm:parallelization}
    There exists an arc $\gamma:[0,2\pi] \to B^d \times [0,2\pi]$ with the following properties: 
    \begin{itemize}
        \item $\gamma|_{[0,2\pi/3]}$ is a radial segment in $B^d \times \{0\}$, while $\gamma|_{[4\pi/3,2\pi]}$ is a radial segment in $B^d \times \{2\pi\}$. 
        \item For every $\theta \in [2\pi/3,4\pi/3]$, $T_{\gamma(\theta)}$ is nearly parallel to $S_{\gamma(\theta)}$. 
    \end{itemize}
    See Figure~\ref{fig:arc}.
\end{theorem}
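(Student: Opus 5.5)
We take the arc $\delta:[0,1]\to L$ from Lemma~\ref{lem:local_incompressibility2} and build $\gamma$ by concatenation. Write $\delta(0)=(t_0,0)$ and $\delta(1)=(t_1,2\pi)$. Proposition~\ref{prop:sweepout_near_boundary2} puts both endpoints in $\interior B_r$, so the radial segments from $(0,0)$ to $(t_0,0)$ and from $(t_1,2\pi)$ to $(0,2\pi)$ exist inside the slices $B^d\times\{0\}$ and $B^d\times\{2\pi\}$. The plan is to set $\gamma|_{[0,2\pi/3]}$ to be the first radial segment, let $\gamma|_{[2\pi/3,4\pi/3]}$ be a reparametrization of (a perturbation of) $\delta$, and let $\gamma|_{[4\pi/3,2\pi]}$ be the second radial segment run in reverse. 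The statement of Theorem~\ref{thm:parallelization} only asks these end pieces to be radial segments in the two slices, so the orientation convention is harmless. What remains is the second bullet: along the middle portion, $T_{\gamma(\theta)}$ must be nearly parallel to $S_{\gamma(\theta)}$.

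We have $\Sigma_{t\theta}=\emptyset$ along $\delta$, because $\delta\subset L$. At a single point, Proposition~\ref{prop:resolution} upgrades $\Sigma=\emptyset$ to near-parallelism at a nearby point $\tilde t$, namely the point obtained by resolving every saddle tangency in the direction that enlarges $A\cap S$. The main step, which we expect to be the obstacle, is to make this resolution coherent along the whole arc. We would use the earlier lemma that $L$ lies in the $1$-skeleton of $\sCyl$; its proof shows that every point of $L$ lies in the intersection of at least $d+1\ge 2g$ sheets of $\redg$, with $d=2g-1$ by Theorem~\ref{thm:h-index}. So $\delta$ runs along an edge path of the $1$-dimensional stratum where $d+1$ sheets meet. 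Each sheet is locally a hypersurface, and it separates a side where its saddle is resolved one way from a side where it is resolved the other way. Near the $1$-dimensional stratum, the sheets cut a small tubular neighbourhood into $2^{d+1}$ local orthant-like chambers. We would push $\delta$ off the stratum into the chamber in which every saddle is resolved so as to enlarge $A_{t\theta}\cap S_{t\theta}$.

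The part that needs care is the vertices of the $1$-skeleton where $\delta$ changes edges. We would check that the choice of resolution side is determined by the intersection pattern, which, up to negligible intersections, is constant on open simplices by Proposition~\ref{prop:inclusions2}(1). This should let the chosen chambers along consecutive edges fit together near each vertex. Where extra sheets pass through a vertex, we would check that the perturbed arc can be routed in general position so that it crosses no other sheet. Then, by the argument of Proposition~\ref{prop:resolution} applied uniformly along the perturbed arc $\tilde\delta$, Lemma~\ref{lem:empty->cells} and the resolution show that $T_{\tilde\delta(\theta)}$ is nearly parallel to $S_{\tilde\delta(\theta)}$ for every $\theta$.

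Finally, we perturb the endpoints of the radial segments to match $\tilde\delta(0)$ and $\tilde\delta(1)$. Those endpoints are still in the interior of the slices, so the segments stay radial. Reparametrizing then gives $\gamma$ with the two required properties.
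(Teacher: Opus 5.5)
Your proposal is correct and follows the paper's proof. You take $\delta$ from Lemma~\ref{lem:local_incompressibility2}, push it to a nearby arc $\widetilde{\delta}$ along which $T$ is nearly parallel to the balancing sphere via Proposition~\ref{prop:resolution}, and concatenate with radial segments in $B^d\times\{0\}$ and $B^d\times\{2\pi\}$; the paper applies Proposition~\ref{prop:resolution} pointwise along $\delta$ without discussing coherence, so your chamber-by-chamber treatment of the edges and vertices of the $1$-skeleton goes beyond what the paper itself supplies.
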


\begin{proof}
    Let $\delta:[0,1] \to B^d \times [0,2\pi]$ be an arc given in Lemma~\ref{lem:local_incompressibility2}. Then, $\Sigma_{\delta(\theta)}$ is an empty set for every $\theta \in [0,1]$. Applying Proposition~\ref{prop:resolution} for each $\theta$ gives rise to an arc $\lft{\delta}:[0,1] \to B^d \times [0,2\pi]$ such that $T_{\lft{\delta}(\theta)}$ is nearly parallel to $S_{\lft{\delta}(\theta)}$. Let  $l_0:[0,1] \to B^d \times \{0\}$ and $l_{2\pi}:[0,1] \to B^d \times \{2\pi\}$ denote the radial segments with $l_0(1)=\lft{\delta}(0)$ and $l_{2\pi}(1)=\lft{\delta}(1)$. Now we can define $\gamma$ to be the concatenation of $l_0$, $\lft{\delta}$ and $l_{2\pi}$. 
\end{proof}

\begin{figure}
        \begin{overpic}[scale=1]{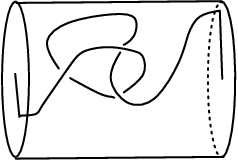}
        \end{overpic}
        \caption{The arc $\gamma$ in $B^d \times [0,2\pi]$.}
        \label{fig:arc}
\end{figure}

Finally, we unify the results in Sections~\ref{sec:sliding} and \ref{sec:parallelization} into a single theorem, which can be compared with Corollary~3 of \cite{Powell}: 

\begin{theorem}[store=planar]\label{thm:planargraph}
    If $\tau_\theta:S^3 \to S^3$, $\theta \in [0,2\pi]$, is an ambient isotopy representing an element of $G(S^3,T)$, then $\tau_\theta$ can be modified so that $T_\theta:=\tau_\theta(T)$ is supported by a family of spines $K_\theta$ with $K_0=K_{2\pi}=K$ that arises from a sequence of $S$-slides and isotopies. 
\end{theorem}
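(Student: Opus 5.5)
The plan is to transport the problem to the parameter space $B^d \times [0,2\pi]$ of the sweepout $T_{t\theta}=\tau_\theta(T_t)$ and replace the isotopy $T_\theta = T_{0\theta}$ by the isotopy along the arc $\gamma$ of Theorem~\ref{thm:parallelization}. First I check that the endpoints agree: $\gamma(0)=(0,0)$ because $\gamma|_{[0,2\pi/3]}$ is a radial segment starting at the center of $B^d\times\{0\}$, and likewise $\gamma(2\pi)=(0,2\pi)$. Both $\theta\mapsto(0,\theta)$ and $\gamma$ are paths in the contractible space $B^d\times[0,2\pi]$ with these endpoints, so they are homotopic rel endpoints. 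Pulling this homotopy back through the continuous family $(t,\theta)\mapsto T_{t\theta}$ shows that $T_\theta$ and $T_{\gamma(\theta)}$ are equivalent isotopies, since $T_{0,0}=T_{0,2\pi}=\tau_{2\pi}(T)=T$. So it suffices to prove that $T_{\gamma(\theta)}$ is supported by a family of spines of the required kind, and I treat the three pieces of $\gamma$ separately.

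For the radial pieces I apply Corollary~\ref{cor:bouquet}. On $[0,2\pi/3]$ the sweepout is $\{T_t\}$ itself. Put $t_1$ for the $B^d$-coordinate of $\gamma(2\pi/3)$; then $T_{t_1}$ is nearly parallel to $S_{t_1}$, and I fix a spine $K'\subset S_{t_1}$ of $A_{t_1}$ lying in the parallelism surface. The corollary yields a family from $K$ to $K'$ built from $S$-slides, $S_{t_1}$-slides and isotopies. On $[4\pi/3,2\pi]$ the sweepout is $\tau_{2\pi}(T_t)$. Since $\tau_{2\pi}$ preserves $T$ but not necessarily $S$ or $K$, I apply the corollary to the translated data, i.e. to $\tau_{2\pi}^{-1}$ of everything, and then push the result forward. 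This gives a family, run in reverse, from a spine $K''\subset S_{\gamma(4\pi/3)}$ to a spine of $A$ at $\theta=2\pi$. I have to make sure that this final spine can be taken to be $K$ itself, via further $S$-slides inside $S\cap A$ using Proposition~\ref{prop:arc-spine}; I expect this endpoint bookkeeping to require care.

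For the middle piece $\theta\in[2\pi/3,4\pi/3]$, each $T_{\gamma(\theta)}$ is nearly parallel to the level sphere $S_{\gamma(\theta)}$. I subdivide $[2\pi/3,4\pi/3]$ at the finitely many parameters where $\gamma$ crosses sheets of $\redg$, where the intersection pattern changes. On each open subinterval the parallelism surface $P_\theta=A_{\gamma(\theta)}\cap S_{\gamma(\theta)}$, after removing negligible intersections, varies by isotopy, so a spine in $P_\theta$ can be carried along continuously. At each crossing, the spines before and after lie in parallelism surfaces of the same handlebody, or of two handlebodies that agree up to a small isotopy, inside a common level sphere. By Proposition~\ref{prop:arc-spine} they are therefore related by edge slides and isotopies inside that sphere, and these are level-sphere slides.

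Finally, every slide so far is an $S_s$-slide for some level $s$. I convert these into $S$-slides by conjugating with the flow of $\nabla f$, which moves $S_s$ onto $S$; doing this continuously in $\theta$ only changes the supporting isotopy by an ambient isotopy, not its equivalence class. I expect the main obstacle to be the middle step. The difficulty is the continuity of the choice of nearly-parallel position along $\lft{\delta}$: the isotopy removing negligible intersections must be chosen coherently in $\theta$. I would first try to use the uniqueness coming from irreducibility of $S^3$ (the innermost-disk isotopy of Lemma~\ref{lem:empty->cells} is canonical up to contractible choices), reducing the problem to the finitely many wall crossings handled above.
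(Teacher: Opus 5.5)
Your proposal is correct and takes the same route as the paper. You replace $T_\theta$ by $T_{\gamma(\theta)}$ via a homotopy of $\gamma$ to $\{0\}\times[0,2\pi]$, handle the two radial ends with Corollary~\ref{cor:bouquet}, use near-parallelism on $[2\pi/3,4\pi/3]$, and finally push each $S_{\gamma(\theta)}$ to $S$ by an ambient isotopy. Your extra bookkeeping (Proposition~\ref{prop:arc-spine} at wall crossings, and matching the chosen spines at $2\pi/3$ and $4\pi/3$) makes explicit what the paper leaves implicit.

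The endpoint worry at $\theta=2\pi$ is harmless. Since $T_{0,2\pi}=\tau_{2\pi}(T)=T$ and $A_{0,2\pi}=A$, the spine $K$ is still peripheral in $A\cap S$. So the argument of Lemma~\ref{lem:sliding}, applied to the sweepout $\tau_{2\pi}(T_t)$, starts from $K$ itself, and no translation by $\tau_{2\pi}^{-1}$ is needed.
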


\begin{proof}
    Let $\gamma:[0,2\pi] \to B^d \times [0,2\pi]$ be an arc given in Theorem~\ref{thm:parallelization}. Since $\gamma$ is homotopic to the segment $\{0\} \times [0,2\pi]$, $T_{\gamma(\theta)}$ is equivalent to $T_\theta$. By replacing $T_\theta$ with $T_{\gamma(\theta)}$, we may assume that $T_\theta$ is nearly parallel to $S_\theta$ for $\theta \in [2\pi/3,4\pi/3]$. In particular, $T_\theta$ is supported by a family of spines $K_\theta \subset S_\theta$ for $\theta \in [2\pi/3,4\pi/3]$. On the other hand, we can use Corollary~\ref{cor:bouquet} to find spines $K_\theta$ for $\theta \in [0,2\pi/3]$ such that $T_\theta=\bd{N(K_\theta)}$ and $K_\theta$ arises from a sequence of $S$- and $S_{2\pi/3}$-slides as well as isotopies. A symmetric argument shows that there also exist spines $K_\theta$ for $\theta \in [4\pi/3,2\pi]$ with the same properties. 

    Now apply for each $\theta \in [0,2\pi]$ the ambient isotopy that takes $S_\theta$ to $S=S_0$. This gives an isotopy $T'_\theta$ and a family of spines $K'_\theta$ satisfying the desired properties. 
\end{proof}

\section{The proof of the Powell conjecture}\label{sec:finalstep}

In this section, we complete the proof of Theorem~\ref{thm:Powell}. We will prove the inverse direction of the argument in \cite{Powell}: We deduce the Powell conjecture from Theorem~\ref{thm:planargraph}, which is slightly weaker than Corollary~3 of \cite{Powell}. 

We say a compressing disk $a \subset A$ is \emph{primitive} if there exists a compressing disk $b \subset B$ (called a \emph{dual disk} for $a$) such that $\bd{b}$ intersects $\bd{a}$ in a single point. For $j=1,2,\ldots,g$, let $a^j$ be a cocore of the $j$th handle of $A$, and $b^j$ a cocore of the $1$-handle of $B$ that corresponds to $\Delta^j$. A consequence of Theorem~\ref{thm:planargraph} is the following:   

\begin{lemma}\label{lem:primitive_disks}
    If $\tau_\theta:S^3 \to S^3$, $\theta \in [0,2\pi]$, is an ambient isotopy representing an element of $G(S^3,T)$, then $\tau_\theta$ can be modified to satisfy the following property: There exists a finite sequence of primitive disks $a_{\theta_i} \subset A_{\theta_i}$ with dual disks $b_{\theta_i}$ such that 
    \begin{enumerate}
        \item $a_0=a_{2\pi}=a^1$,  
        \item $\tau^{-1}_{\theta_i}(a_{\theta_i})$ and $\tau^{-1}_{\theta_{i+1}}(a_{\theta_{i+1}})$ are disjoint, and 
        \item $\tau^{-1}_{\theta_i}(b_{\theta_i})$ intersects $\tau^{-1}_{\theta_{i+1}}(a_{\theta_{i+1}})$ in at most one point. 
    \end{enumerate}
\end{lemma}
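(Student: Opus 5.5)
We first apply Theorem~\ref{thm:planargraph} to replace $\tau_\theta$ by an isotopy such that $T_\theta=\tau_\theta(T)$ is supported by a family of spines $K_\theta$ with $K_0=K_{2\pi}=K$. This family arises from a finite sequence of $S$-slides and isotopies. We then choose $0=\theta_0<\theta_1<\cdots<\theta_n=2\pi$ so that on each interval $[\theta_i,\theta_{i+1}]$ the family $K_\theta$ either moves by an isotopy only or performs exactly one $S$-slide. At each $\theta_i$ the spine $K_{\theta_i}$ is a graph in $S$ with $T_{\theta_i}=\bd{N(K_{\theta_i})}$ (after the normalizing ambient isotopy).

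The key local observation is the one mentioned in Section~\ref{sec:outline}. If $K'\subset S$ is a spine of $A'$ and $e$ is an edge of $K'$ whose complement $K'\setminus \interior e$ is still connected (as for any edge that is slid, or any loop of a bouquet), then the cocore $a_e$ of the $1$-handle of $N(K')$ around $e$ is primitive. A dual disk is obtained as follows. Since $K'$ lies in the sphere $S$, the edge $e$ is adjacent to a face $F$ of $S\setminus K'$, and $F$ meets $e$ on one side. The face $F$, pushed slightly off $N(K')$, is a disk in $B'=S^3\setminus \interior N(K')$. If $F$ is adjacent to $e$ along only one side, its boundary crosses $\bd{a_e}$ exactly once. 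In general we instead take a face $F$ bounded by a cycle through $e$ that passes $e$ once; since $K'$ is planar and $e$ is nonseparating, one of the two faces adjacent to $e$ is distinct from the other, and it gives a disk $b_e$ with $|\bd b_e\cap \bd a_e|=1$.

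For the inductive choice, we set $a_{\theta_0}=a^1$ with dual $b^1$. Given $a_{\theta_i}$, the cocore of an edge $e_i$ of $K_{\theta_i}$, we choose $e_{i+1}$ as follows. If the move on $[\theta_i,\theta_{i+1}]$ is an isotopy, or a slide of an edge other than $e_i$ whose end does not travel along $e_i$, we take $e_{i+1}$ to be the image of $e_i$. The cocore is then carried along, so $\tau^{-1}_{\theta_{i+1}}(a_{\theta_{i+1}})$ is isotopic to (hence can be taken equal to, and in particular disjoint from after a small push) $\tau^{-1}_{\theta_i}(a_{\theta_i})$. If the slide moves an end of some edge $f$ along $e_i$, or moves $e_i$ itself, we instead choose as $e_{i+1}$ an edge of $K_{\theta_{i+1}}$ whose cocore is disjoint from the whole sliding region. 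Such an edge exists because $g\ge 3$ and a single slide involves only two edges. Its cocore is then disjoint from $a_{\theta_i}$ after pulling back, which gives (2).

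For (3), the dual $b_{\theta_i}$ is the face disk from the observation above. We pull back to $(S^3,T)$ via the $\tau$'s and use that both disks come from faces and cocores of spines in the same sphere $S$, differing by at most one slide. A face of $S\setminus K_{\theta_i}$ crosses the new edge $e_{i+1}$ at most once, because a face boundary traverses each edge at most twice and we can choose the side. Hence $\bd b_{\theta_i}$ meets $\bd a_{\theta_{i+1}}$ in at most one point. Finally, at $\theta_n=2\pi$ we have $K_{2\pi}=K$ but possibly $a_{2\pi}\ne a^1$. We append a few more points, using an isotopy from $K$ to itself that is constant, and step from the current cocore $a^j$ to $a^1$. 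The cocores of distinct loops of $K$ are disjoint, and $b^j$ meets $a^1$ in zero points, so (1)--(3) hold.

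The main obstacle is (3): controlling how a face disk of $K_{\theta_i}$ meets the cocore of the next chosen edge after a slide. Sliding can make a face boundary run over an edge twice. To handle this, we would choose $e_{i+1}$ and the adjacent face simultaneously, preferring an edge that lies on two distinct faces, and argue via planarity of $K_{\theta_{i+1}}\subset S$ that such a choice disjoint from the slide is always available.
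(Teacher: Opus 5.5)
Your framework is the paper's. You apply Theorem~\ref{thm:planargraph}, cut $[0,2\pi]$ so that each step is a single $S$-slide or an isotopy, and take $a_{\theta_i}$ to be the cocore of a nonseparating edge of $K_{\theta_i}\subset S$, with $b_{\theta_i}$ a face of $S\setminus K_{\theta_i}$ adjacent to it. You then close up at $2\pi$ by stepping from some $a^j$ to $a^1$.

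The difference is the rule for choosing the next disk. The paper always takes $a_{\theta_{i+1}}$ to be the cocore of the edge being slid. That cocore can be placed near the end that stays fixed, so the slide does not touch it. Its pullback to $\theta_i$ is then just the cocore of an edge of $K_{\theta_i}$, hence disjoint from $a_{\theta_i}$, with no counting and no use of $g\ge 3$.

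Your rule is to carry $e_i$ along unless the slide alters its cocore, and otherwise jump to an untouched edge. This also works, but the existence claim that ``a single slide involves only two edges'' is false for a slide along a long arc in $K_{\theta_i}$. You need to subdivide into short slides, each moving the sliding end past at most one vertex, so that at most one cocore is altered. Then use that a graph with first Betti number $g$ has at least $g$ nonseparating edges. In your case where $e_i$ itself is slid, no switch is needed at all: its cocore is unchanged, which is exactly the paper's choice.

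For (3), the justification you give does not work. ``A face boundary traverses each edge at most twice and we can choose the side'' does not yield at most one crossing. Moreover, the side of $b_{\theta_i}$ is fixed relative to $e_i$ before $e_{i+1}$ is known.

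However, the obstacle you flag in your last paragraph is not real. In a plane graph, an edge has the same face on both sides if and only if it is a bridge, because an edge lying on a cycle has its two sides separated by that cycle. In both of your cases, the pullback of $a_{\theta_{i+1}}$ to $T_{\theta_i}$ is the cocore of a nonseparating edge of $K_{\theta_i}$; nonseparating is needed anyway for primitivity. So every face of $S\setminus K_{\theta_i}$, in particular $b_{\theta_i}$, runs over that edge at most once. Hence $\bd{b_{\theta_i}}$ meets the pulled-back $\bd{a_{\theta_{i+1}}}$ in at most one point. This is precisely what the paper's Figure~\ref{fig:primitive_disks} asserts. With these two repairs your argument goes through.
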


\begin{proof}
    By Theorem~\ref{thm:planargraph}, we can modify $T_\theta$ so that the following holds: There are finitely many points $0=\theta_0<\theta_1<\cdots<\theta_{n-2}<\theta_{n-1}=2\pi$ and a sequence of spines $K_{\theta_i} \subset S$ such that $K_{\theta_i}$ related to $K_{\theta_{i-1}}$ by either an $S$-slide or an isotopy. 
    
    First, we define $a_{\theta_i}$ for $0<i<n$ by induction. Assume that $a_{\theta_{i-1}}$ has already been defined. If $K_{\theta_i}$ is related to $K_{\theta_{i-1}}$ by an isotopy, we can define $a_{\theta_i}$ as the image of $a_{\theta_{i-1}}$. If $K_{\theta_i}$ is related to $K_{\theta_{i-1}}$ by an $S$-slide about an edge $e_{i-1} \subset K_{\theta_{i-1}}$, then define $a_{\theta_i}$ to be a cocore of the $1$-handle whose core is the image of $e_{i-1}$. 

    We see that (2) holds. Observe that for $0 \le i < n$ the preimage of $a_{\theta_{i+1}}$ is disjoint from $a_{\theta_i}$. See Figure~\ref{fig:primitive_disks}. Furthermore, $a_{\theta_{n-1}}$ is disjoint from $a^1=a_{\theta_n}$ since $K_{\theta_{n-1}}=K$ and $a_{\theta_{n-1}}=a^j$ for some $1 \le j \le g$. Thus, (2) holds for any $0 \le i \le n$. 
    
    For each $i$ choose a compressing disk $b_{\theta_i} \subset S$ that intersects $a_{\theta_i}$ in a single point. (There are exactly two possible choices for such $b_{\theta_i}$.) Note the preimage of $\bd{a_{\theta_{i+1}}}$ in $T_{\theta_i}$ intersects $\bd{b_{\theta_i}}$ in at most one point (Figure~\ref{fig:primitive_disks}). Thus, the item (3) holds. 
\end{proof}

\begin{figure}
    \begin{overpic}[scale=.5]{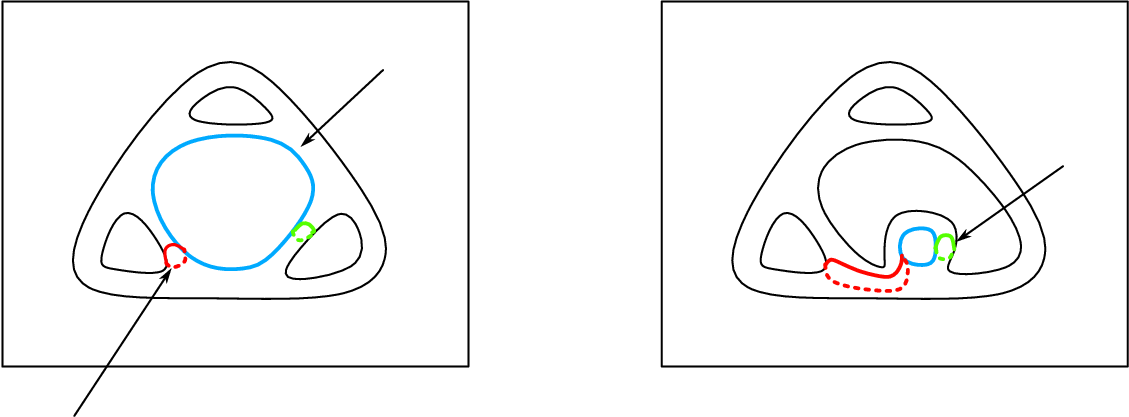}
    \end{overpic}
    \put(-220,-5){$T_{\theta_i}$}
    \put(-60,-5){$T_{\theta_{i+1}}$}
    \put(-265,80){$S$}
    \put(-262,-7){$\bd{a_{\theta_i}}$}
    \put(-182,84){$\bd{b_{\theta_i}}$}
    \put(-20,65){$\bd{a_{\theta_{i+1}}}$}
    \caption{Left: in $T_{\theta_i}$, the preimage of $a_{\theta_{i+1}}$ (green) is disjoint from $a_{\theta_i}$ (red) while it may intersect $b_{\theta_i}$ (blue) in a single point. Right: $a_{\theta_{i+1}}$ and the images of $a_{\theta_i}$ and $b_{\theta_i}$ in $T_{\theta_{i+1}}$.}
    \label{fig:primitive_disks}
\end{figure}

We now complete the proof of Theorem~\ref{thm:Powell}. We argue by induction on $g$. If $g=3$, the conclusion follows from \cite{Freedman-Scharlemann} or \cite{Cho-et-al}. Assume that $g > 3$ and the assertion holds for a genus $g-1$ Heegaard splitting. 

\begin{definition}[{\cite{Freedman-Scharlemann}}]
    Suppose that $T'$ is a Heegaard surface isotopic to $T$. We say two maps $h_1,h_2:(S^3,T') \to (S^3,T)$ are \emph{Powell equivalent} if $h_2h_1^{-1}$ is a Powell move. 
\end{definition}

\begin{lemma}\label{lem:P-equivalence}
    The choice of a primitive disk $a'$ for $T'$ uniquely determines a map 
    \begin{eqnarray*}
       h:(S^3,T',a') \to (S^3,T,a^1) 
    \end{eqnarray*}
    up to Powell equivalence. 
\end{lemma}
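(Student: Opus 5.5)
Two separate claims have to be checked: that some map $h$ carrying $(T',a')$ to $(T,a^1)$ exists, and that any two such maps are Powell equivalent. Existence is routine. Since $T'$ is isotopic to $T$, there is a diffeomorphism $h_0:(S^3,T')\to(S^3,T)$ isotopic to the identity. The disk $h_0(a')$ is a primitive disk for $T$. By Waldhausen's uniqueness of Heegaard splittings of $S^3$, together with the standard fact that primitive disks with their duals can be extended to a standard system, there is an element of $\MCG(S^3,T)$ taking $h_0(a')$ to $a^1$. Since $\MCG(S^3)$ is generated by the identity and a reflection that we exclude by orientation, this element can be chosen in $G(S^3,T)$. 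Composing it with $h_0$ gives the required $h$.

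For uniqueness, let $h_1,h_2:(S^3,T',a')\to(S^3,T,a^1)$ both be such maps. Then $\phi:=h_2h_1^{-1}$ is an element of $G(S^3,T)$ with $\phi(a^1)=a^1$, and we must show that $\phi$ is a Powell move. This is where the induction on $g$ enters.

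\begin{enumerate}
\item Choose a dual disk $b^1$ for $a^1$. A Powell move built from $D_\omega$ and slides of other handles over the first handle can move $\phi(b^1)$, which is another dual of $a^1$, back to $b^1$. Here I would use the Freedman--Scharlemann observation that, once $a^1$ is fixed, any two duals are related by band sums along $\bd{a^1}$, and that such a band sum is realized by $D_\theta$-type slides. After this step we may assume $\phi$ fixes both $a^1$ and $b^1$.
\item Destabilize along the pair $(a^1,b^1)$: the neighborhood of $a^1\cup b^1$ is a standard genus one summand. This reduces $\phi$ to an element $\bar\phi$ of the Goeritz group of the genus $g-1$ splitting.
\item By the induction hypothesis, $\bar\phi$ is generated by the genus $g-1$ versions of $D_\omega,D_\eta,D_{\eta_{12}},D_\theta$. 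Each of these lifts to the genus $g$ splitting as a word in the four genus $g$ generators. This follows from Powell's standard relations expressing the rotation $D_\eta$ of the smaller bouquet via $D_\eta$ and $D_{\eta_{12}}$ of the larger one, with the other generators conjugated by handle exchanges.
\item The remaining discrepancy is supported near the genus one summand. It lies in the genus one Goeritz group, which is generated by $D_\omega$ and a twist, and so it is a Powell move.
\end{enumerate}

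I expect step 1 to be the main obstacle, that is, showing the stabilizer of $a^1$ acts on its duals through Powell moves. I would try first to argue that any two duals of $a^1$ can be made disjoint by band moves. Each such band move I would try to realize by sliding the first $1$-handle over another handle, which is $D_\theta$ conjugated by $D_{\eta}$ and $D_{\eta_{12}}$.
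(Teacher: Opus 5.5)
The paper does not prove this lemma directly. It takes Powell's conjecture in genus $g-1$ as the induction hypothesis and quotes Corollary~3.6 of Freedman--Scharlemann. Your outline (fix a dual, destabilize along $(a^1,b^1)$, induct) is in effect an attempt to reprove that corollary. The existence part is fine, and so is the reduction to showing that every $\phi\in G(S^3,T)$ with $\phi(a^1)=a^1$ is a Powell move. But Steps~1 and~4 have genuine gaps.

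\textbf{Step~4 is false as stated.} Once $\phi$ preserves the reducing sphere $P=\partial N(a^1\cup b^1)$, capping off gives a homomorphism from the stabilizer of $P$ to the genus-$(g-1)$ Goeritz group. Its kernel is not supported near the genus-one summand. As in a Birman exact sequence, it contains the \emph{summand pushes}, which drag the ball $N(a^1\cup b^1)$ once around a loop $\gamma$ in the genus-$(g-1)$ part of $T$. On $T$ such a push is the product of opposite Dehn twists about the two boundary curves of a neighborhood of $\gamma$ together with the summand. So it is nontrivial and does not lie in any genus-one Goeritz group. For the same reason, your lifts of the genus-$(g-1)$ generators in Step~3 are only defined up to such pushes. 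The pushes are Powell moves, but this needs an argument:
\begin{itemize}
\item a push along a longitude such as $\partial b^j$ amounts to sliding the first handle of $A$ over the $j$th, so it comes from $D_\theta$;
\item a push along a meridian $\partial a^j$ is a slide on the $B$ side, and it is not visibly a word in $D_\omega, D_\eta, D_{\eta_{12}}, D_\theta$.
\end{itemize}

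\textbf{Step~1 is where the real content lies, and the mechanism you propose fails in general.} You rightly flag this step. Here is a counterexample to your mechanism.
\begin{itemize}
\item Let $c\subset B$ be an essential nonseparating disk in the genus-$(g-1)$ summand that is not primitive there. For instance, take the cocore of an unknotting tunnel of a trefoil, placed in a genus-two summand on the $B$ side, so that cutting along $c$ leaves a knotted handlebody.
\item Let $b'=b^1\# c$ be a band sum along a band missing $\partial a^1$. Then $b'$ is a dual of $a^1$ disjoint from $b^1$.
\item Band-summing $b^1$ with $b'$ along the appropriate arc of $\partial a^1$ recovers $c$.
\item A slide of the first handle over the $j$th sends $b^1$ to a dual for which this construction yields the standard, primitive disk $b^j$. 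Hence $b^1\mapsto b'$ is not such a slide, nor a conjugate of $D_\theta$ by $D_\eta$ and $D_{\eta_{12}}$.
\item You also cannot first standardize $c$ by a Powell move fixing $a^1$ and $b^1$. Such a move preserves $P$, and therefore preserves the primitivity of $c$ in the capped-off splitting.
\end{itemize}

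Exchanging two disjoint duals of a fixed disk is exactly the kind of statement the paper does not prove but imports from Freedman--Scharlemann. The paper uses their Proposition~3.5 for this in the next lemma, with $A$ and $B$ switched. Your sketch provides no substitute for it. As written, the proposal outlines the reduction, but both of its nontrivial ingredients are missing: the summand pushes in Step~4 and the exchange of duals in Step~1.
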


\begin{proof}
    This follows from induction and \cite[Corollary~3.6]{Freedman-Scharlemann}. 
\end{proof}

Let $\tau_\theta:S^3 \to S^3$ be an isotopy that represents an element of $G(S^3,T)$. It suffices to show that $\tau_{2\pi}$ is Powell equivalent to $\id_{S^3}$. Let $a_{\theta_i}$ be a sequence of disks given in Lemma~\ref{lem:primitive_disks}. For each $i$, fix a map 
\begin{eqnarray*}
    h_{\theta_i}:(S^3,T_\theta,a_{\theta_i},b_{\theta_i}) \to (S^3,T,a^1,b^1).
\end{eqnarray*}
We may take $h_0$ and $h_{2\pi}$ to be $\id_{S^3}$. Put $a'_i:=\tau_{\theta_i}^{-1}(a_{\theta_i})$ and $b'_i:=\tau_{\theta_i}^{-1}(b_{\theta_i})$. 

\begin{lemma}\label{lem:disk_excahnging}
    There is a Powell move that exchanges $a^1=h_{\theta_i}\tau_{\theta_i}(a'_i)$ and $h_{\theta_i}\tau_{\theta_i}(a'_{i+1})$. 
\end{lemma}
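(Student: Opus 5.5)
Set $h:=h_{\theta_i}\tau_{\theta_i}$, a self-map of $(S^3,T)$ sending $(a'_i,b'_i)$ to $(a^1,b^1)$, and put $a:=h(a'_{i+1})$. The task is to produce a Powell move $\phi$ with $\phi(a^1)=a$ and $\phi(a)=a^1$, or at least one carrying $a^1$ to $a$. Lemma~\ref{lem:primitive_disks} gives two facts about $a$: it is primitive, and it is disjoint from $a^1$ by item (2). Item (3) adds that $a$ meets $b^1$ in at most one point. The argument therefore splits into two cases.

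\emph{Case 1: $a\cap b^1=\emptyset$.} Destabilize along the reducing pair $(a^1,b^1)$. The sphere $\partial N(a^1\cup b^1)$ cuts $(S^3,T)$ into a genus~$1$ splitting and a genus $g-1$ splitting $(S^3,T^-)$. Because $a$ misses $a^1\cup b^1$, we can isotope it into the genus $g-1$ side, where it is still a primitive disk for $T^-$. A dual disk there can be taken from the dual $h(b'_{i+1})$, after removing its intersections with the reducing sphere by an outermost-arc argument. Primitive disks in the standard genus $g-1$ splitting are all equivalent under the genus $g-1$ Goeritz group. By the induction hypothesis that group is generated by the genus $g-1$ Powell elements, so there is a genus $g-1$ Powell move $\phi^-$ with $\phi^-(a)=a^2$. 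This is the step I expect to need most care. I must check that genus $g-1$ Powell moves, extended by the identity over the genus~$1$ summand, are Powell moves in genus $g$. This should follow from the identities in \cite{Freedman-Scharlemann} and \cite{Scharlemann20}, essentially the same input as Lemma~\ref{lem:P-equivalence}. Finally compose $\phi^-$ with the exchange $D_{\eta_{12}}$ of the first two handles, which swaps $a^1$ and $a^2$. The composite, conjugated by $\phi^-$, exchanges $a^1$ and $a$.

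\emph{Case 2: $|a\cap b^1|=1$.} Here $a$ and $a^1$ are disjoint primitive disks, and $b^1$ is a common dual to both. The pair $a^1\cup a$ then cobounds, together with a band along $\partial b^1$, a separating disk. Sliding $a$ over $a^1$ along $\partial b^1$ produces a disk $a^\ast$ disjoint from both $a^1$ and $b^1$, and still primitive. The handle slide $D_\theta$ relates the pairs $\{a^1,a\}$ and $\{a^1,a^\ast\}$; more precisely, I would argue that a conjugate of $D_\theta$ by a Powell move sends $a$ to $a^\ast$ while fixing $a^1$. Then apply Case 1 to $a^\ast$ to exchange $a^1$ and $a^\ast$, and compose with the slide to obtain the required move.

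In both cases the main obstacle is the same bookkeeping step: realizing the needed homeomorphism of the genus $g-1$ summand, or the slide, as a genuine composition of $D_\omega,D_\eta,D_{\eta_{12}},D_\theta$. For this I would rely on \cite[Corollary~3.6]{Freedman-Scharlemann} exactly as in Lemma~\ref{lem:P-equivalence}, applied after a genus $g-1$ destabilization.
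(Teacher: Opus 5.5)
Your Case~1 is essentially the paper's argument: destabilize along $(a^1,b^1)$, use the genus $g-1$ hypothesis to carry $a$ to a standard cocore, then finish with a standard generator. The paper uses $a^g$ and $D_\eta$ where you use $a^2$ and $D_{\eta_{12}}$. The genuine gap is in Case~2.

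The paper does not reduce Case~2 to Case~1. It quotes \cite[Proposition~3.5]{Freedman-Scharlemann} with $A$ and $B$ interchanged. That result directly gives a Powell move exchanging two disjoint disks $a^1,a$ that share the dual $b^1$, with $b^1$ left invariant.

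Your reduction through the slid disk $a^\ast$ fails, because $a^\ast$ need not be primitive. Here is a configuration satisfying everything you use:
\begin{enumerate}
\item Take any essential disk $a^\dagger$ in the genus $g-1$ summand, so $a^\dagger$ misses $a^1\cup b^1$.
\item Take an arc $\gamma\subset T$ from $\partial a^\dagger$ to $\partial a^1$ that misses $\partial b^1$, and set $a=a^\dagger\#_\gamma a^1$.
\item Then $a$ is disjoint from $a^1$, and $\partial a$ meets $\partial b^1$ in exactly one point. So $a$ is primitive with dual $b^1$.
\item The curves $\partial a^1,\partial a,\partial a^\dagger$ cobound a pair of pants in $T$. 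The arc of $\partial b^1$ from $\partial a^1$ to $\partial a$ runs inside it, so banding $a^1$ to $a$ along that arc returns exactly $a^\dagger$.
\item Banding along the complementary arc of $\partial b^1$ gives a disk isotopic to $a^\dagger$ in the summand once the attaching disk is capped off.
\end{enumerate}
Primitivity of a disk missing $a^1\cup b^1$ is the same in $T$ as in the summand. When $g-1\ge 2$, $a^\dagger$ can be non-primitive, for example the cocore of an unknotting tunnel of a trefoil in a genus-$2$ piece. In that case Case~1 cannot be applied to $a^\ast$.

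The same example shows that your second step is not bookkeeping: it is false in general. Goeritz elements preserve primitivity. Since $a$ is primitive and $a^\ast$ is not, no element of $G(S^3,T)$ carries $a$ to $a^\ast$, so in particular no Powell conjugate of $D_\theta$ does.

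Invoking \cite[Corollary~3.6]{Freedman-Scharlemann} does not repair this. As used in Lemma~\ref{lem:P-equivalence}, it only says that maps agreeing on a primitive disk are Powell equivalent. That is information about the stabilizer of $a^1$, not a transitivity statement moving $a$ to $a^1$.

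What Case~2 needs is an argument that treats $a^1$ and $a$ symmetrically, as two disjoint duals of the common disk $b^1$, and exchanges them directly. This is the content of \cite[Proposition~3.5]{Freedman-Scharlemann}. Sliding one disk over the other breaks that symmetry and can leave the set of primitive disks.
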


\begin{proof}
    Note that $h_{\theta_i}\tau_{\theta_i}(a'_{i+1})$ is disjoint from $h_{\theta_i}\tau_{\theta_i}(a'_i)$ by Lemma~\ref{lem:primitive_disks}. 
    
    First suppose that $h_{\theta_i}\tau_{\theta_i}(a'_{i+1})$ is disjoint from $b^1$. Then, by induction there exists a Powell move that carries $h_{\theta_i}\tau_{\theta_i}(a'_{i+1})$ to $a^g$. Since $D_\eta$ carries $a^g$ to $a^1$, the conclusion follows. 
    
    Next suppose that $h_{\theta_i}\tau_{\theta_i}(a'_{i+1})$ intersects $b^1=h_{\theta_i}\tau_{\theta_i}(b'_i)$ in a single point. It follows from Proposition~3.5 in \cite{Freedman-Scharlemann} (by switching the roles of $A$ and $B$) that there exists a Powell move that exchanges $a^1$ and $h_{\theta_i}\tau_{\theta_i}(a'_{i+1})$ with $b^1$ left invariant. 
\end{proof}

\begin{lemma}\label{lem:P-equivalence2}
    For each $i$, $h_{\theta_i}\tau_{\theta_i}$ and $h_{\theta_{i+1}}\tau_{\theta_{i+1}}$ are Powell equivalent. 
\end{lemma}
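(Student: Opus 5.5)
Throughout, put $\phi_i:=h_{\theta_i}\tau_{\theta_i}$. Each $\phi_i$ is a diffeomorphism of $S^3$ preserving $T$, so each defines an element of $G(S^3,T)$. It sends $a'_i$ to $a^1$ and $b'_i$ to $b^1$. The statement says exactly that $\phi_{i+1}\phi_i^{-1}$ is a Powell move. The idea is to reduce this, via Lemma~\ref{lem:disk_excahnging}, to a comparison of two maps that send the \emph{same} primitive disk to $a^1$, and then to apply the uniqueness statement of Lemma~\ref{lem:P-equivalence}.

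First, by Lemma~\ref{lem:disk_excahnging} there is a Powell move $P$ that exchanges $a^1=\phi_i(a'_i)$ with $\phi_i(a'_{i+1})$. So $P\phi_i$ sends $a'_{i+1}$ to $a^1$. Now compare the two maps
\[
P\phi_i,\ \phi_{i+1}:(S^3,T,a'_{i+1})\to(S^3,T,a^1).
\]
Lemma~\ref{lem:P-equivalence} is applied here with $T'=T$ and primitive disk $a'=a'_{i+1}$. It asserts that a map carrying the chosen primitive disk to $a^1$ is unique up to Powell equivalence. Hence $\phi_{i+1}(P\phi_i)^{-1}$ is a Powell move. It follows that $\phi_{i+1}\phi_i^{-1}=\bigl(\phi_{i+1}\phi_i^{-1}P^{-1}\bigr)P$ is a product of Powell moves, and so it is itself a Powell move.

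The main obstacle is whether Lemma~\ref{lem:P-equivalence} really applies in the form needed. It only fixes the image of $a'$, and it does not say that the dual disk $b$ goes to $b^1$, whereas the $h_{\theta_i}$ were chosen to respect dual disks. I expect this to be harmless. If the lemma's proof (via \cite[Corollary~3.6]{Freedman-Scharlemann} and the induction on genus) needs dual disks to match as well, I would correct for it as follows. Two dual disks of $a^1$ differ by a slide of the corresponding $1$-handle of $B$ over other handles. After compressing along $a^1$ this becomes a question in genus $g-1$, which the inductive hypothesis handles; the remaining twist is realized by $D_\omega$ or $D_\theta$ style moves.

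One must also check that $P$ can be taken to be an honest element of $G(S^3,T)$ built from the generators. In the case where $\phi_i(a'_{i+1})$ is disjoint from $b^1$, $P$ is obtained by induction composed with $D_\eta$. In the case where it meets $b^1$ in one point, $P$ comes from \cite[Proposition~3.5]{Freedman-Scharlemann}. Both cases are already covered by the proof of Lemma~\ref{lem:disk_excahnging}.
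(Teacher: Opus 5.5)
Your proposal is correct and is essentially the paper's proof: Lemma~\ref{lem:disk_excahnging} supplies a Powell move $P$ with $P\phi_i(a'_{i+1})=a^1$, and Lemma~\ref{lem:P-equivalence}, applied to the primitive disk $a'_{i+1}$, makes $P\phi_i$ and $\phi_{i+1}$ Powell equivalent, so $\phi_{i+1}\phi_i^{-1}$ is a Powell move. Your worry about dual disks does not arise, because Lemma~\ref{lem:P-equivalence} as stated only constrains the image of $a'$, so the fallback sketch in your third paragraph is not needed.
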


\begin{proof}
    By Lemma~\ref{lem:disk_excahnging}, $h_{\theta_i}\tau_{\theta_i}$ is Powell equivalent to a map $k:(S^3,T) \to (S^3,T)$ that carries $a'_{i+1}$ to $a^1$. On the other hand, by Lemma~\ref{lem:P-equivalence} $k$ is Powell equivalent to $h_{\theta_{i+1}}\tau_{\theta_{i+1}}$. Thus, the conclusion follows. 
\end{proof}

It follows from Lemma~\ref{lem:P-equivalence2} that $h_0 \tau_0$ and $h_{2\pi}\tau_{2\pi}$ are Powell equivalent. Recall that $h_0=h_{2\pi}=\id_{S^3}$. Since $\tau_0=\id_{S^3}$ by definition, $\tau_{2\pi}$ is Powell equivalent to $\id_{S^3}$. This completes the proof of Theorem~\ref{thm:Powell}. 

\bibliographystyle{amsalpha} 
\bibliography{reference}

\end{document}